\documentclass[12pt, reqno]{amsart}
\usepackage{graphicx}
\usepackage[hidelinks]{hyperref}
\usepackage{mathrsfs}
\usepackage{amsfonts,amsmath,amsthm, amssymb}
\usepackage{latexsym, euscript, epic, eepic}
\usepackage{indentfirst}
\usepackage{tikz}
\usepackage{enumerate}

\usepackage{fancyhdr}
\fancypagestyle{plain}{%
  \fancyhf{}%
  \fancyfoot[C]{\thepage}%
}

\usepackage[centering]{geometry}
\usepackage[square, comma, sort&compress, numbers]{natbib}
\theoremstyle{plain}
\newtheorem{theorem}{Theorem}[section]
\newtheorem{remark}{Remark}[section]

\newtheorem{lemma}[theorem]{Lemma}
\newtheorem{corollary}[theorem]{Corollary}
\newtheorem{proposition}[theorem]{Proposition}
\newtheorem{claim}[theorem]{Claim}

\theoremstyle{definition}
\newtheorem{definition}{Definition}[section]

\numberwithin{equation}{section}
\newtheoremstyle{citing}
{3pt}
{3pt}
{\itshape}
{}
{\bfseries}
{.}
{.5em}
{\thmnote{#3}}
\theoremstyle{citing}

\begin{document}
\title{ Intersections of Quantitative Recurrence and Badly Approximable Sets for Expanding Markov Maps
}
\author{Lingmin Liao$^{1}$}
\address{$^{1}$School of Mathematics and Statistics, Wuhan University, Wuhan, Hubei, China}
\email{lmliao@whu.edu.cn}

\author{Na Yuan$^{2}$}
\address{$^{2}$School of Mathematics,
Guangdong University of Education,  Xingang Middle Road, 351, Guangzhou, P.~R.\ China}
\email{yuanna199@gmail.com}

\subjclass[2010]{11K55; 28A80}

\keywords{}

\date{}
\maketitle

\begin{center}
\begin{minipage}{120mm}
{\small {\bf Abstract.}
We study a class of 
 expanding Markov maps on  the unit interval, which
 includes
sofic \(\beta\)-transformations, Gauss maps, tent maps and cookie-cutter maps.
We investigate the quantitative recurrence
property
for points
that are dynamically badly approximable, namely, points
whose orbits eventually remain a
positive distance from a given point. 
We show that the Hausdorff dimension of the intersection of a recurrence set and a badly approximable set is determined by the same pressure function as that of the corresponding recurrence set.
We further obtain the Hausdorff dimensions of two
related intersections: recurrence sets with non-recurrent sets,
and shrinking target sets with badly approximable sets. }
\end{minipage}
\end{center}

\section{Introduction}
\subsection{Background and motivation}
Let $(X,T)$ be a  one-dimensional
expanding dynamical system. 
We are interested
in points $x\in X$ whose orbits
\[
x, Tx, T^2x,\ldots
\]
satisfy two distinct asymptotic conditions.
The first is recurrence: the
orbit returns close to its initial point at a
prescribed rate. 
The second is avoidance: the orbit  eventually remains a positive distance from a given point.

The guiding question is whether these two conditions can hold simultaneously
without reducing the Hausdorff dimension of the recurrence set. 
More precisely, we ask the following questions:

\emph{Can an orbit return close to its initial point at a prescribed
quantitative rate while simultaneously avoiding a given point? If so, does
the avoidance requirement affect the Hausdorff dimension of the set of  recurring points?}

These questions are motivated by the connections between
Diophantine approximation, dynamical systems, and fractal geometry.

One of the classical
starting points of Diophantine approximation on fractal sets is Mahler's
question \cite{Mahler1984} concerning approximation properties of points in
the middle-third Cantor set.  
This problem has led to extensive work on the
distribution of rational points inside or near fractal sets; see
\cite{Bugeaud2016,Levesley2007,Khalil36,Heweikun2024,Heweikun2025,Tanwang2024}
and references therein.

Two fundamental families of sets in classical Diophantine approximation are
well-approximable sets and badly approximable sets. Given a function
\(\psi:\mathbb N\to(0,\infty)\), the set of \(\psi\)-well approximable points is
defined by
\[
W(\psi)
=
\left\{
x\in [0,1]:
\left|x-\frac pq\right|<\psi(q)
\ \text{for infinitely many }(p,q)\in\mathbb Z\times\mathbb N
\right\}.
\]
The metric theory of \(W(\psi)\) has been extensively studied; see, for
instance,
\cite{Khintchine1924,Jarnik1931,Beresnevich2006}.
In contrast, the set of badly approximable points is defined by
\[
\mathrm{BAD}
=
\left\{
x\in[0,1]:
\exists c(x)>0 \text{ such that }
\left|x-\frac pq\right|>\frac{c(x)}{q^2}
\text{ for all }(p,q)\in\mathbb Z\times\mathbb N
\right\}.
\]
It is well known that \(\mathrm{BAD}\) has Lebesgue measure zero but full
Hausdorff dimension \cite{Schmidt1980}.
Badly approximable points on
fractal sets have also been studied extensively; 
we refer the reader to \cite{Kleinbock2005,Kleinbock2010,Das2018,Das2019}
for related results.

Many problems in Diophantine approximation have natural dynamical formulations.
Let $(X,\rho)$ be a metric space and  $T:X\to X$ be a transformation.
 We study the 
 approximation properties of the orbit \( \{T^n x\}_{n\geq1} \) of a point \(x\in X\).
Two important dynamical analogues of well-approximable sets are shrinking target sets and quantitative recurrence sets.

Given a  point $y\in X$ and a function
$\psi:\mathbb N\times X\to (0,\infty)$, define
\[
S(T,\psi,y)
:=
\left\{
x\in X:
\rho(T^n x,y)<\psi(n,x)
\ \text{for infinitely many } n\in\mathbb N
\right\}.
\]
The shrinking target problem, which  concerns  the size of $S(T,\psi,y)$ in terms of
measure and dimension,  was introduced and investigated by Hill and Velani
\cite{Hill1995,Hill1997}.
Since then, shrinking target sets have been studied in a variety of dynamical and fractal settings, including expanding maps, Gauss maps, $\beta$-transformations, conformal iterated function systems, matrix actions, and Bedford--McMullen carpets; we refer the reader to \cite{Urbanski,Li2014,Shen2013,li2022,barany2018}.

Closely related to shrinking target sets are quantitative recurrence sets. 
Given a function $\tilde\psi:\mathbb N\times X\to (0,\infty)$, we are interested in the following  set
\[
R\langle T,\tilde\psi\rangle
=
\left\{
x\in X:
\rho(T^n x,x)<\tilde\psi(n,x)
\ \text{for infinitely many } n\in\mathbb N
\right\}.
\]
Thus, \(R\langle T,\tilde\psi\rangle\) consists of points whose orbits return infinitely often to neighbourhoods of their initial positions with radii prescribed by $\tilde\psi$.

The metric theory of \(R\langle T,\tilde\psi\rangle\) has been developed 
from both
measure-theoretic and dimensional viewpoints. 
Zero--full measure laws for \(R\langle T,\tilde\psi\rangle\)
have been obtained under various dynamical assumptions; see, for example,
\cite{Chang2018,Baker2019,Hussain2021}. 
Hausdorff dimension results were
proved for $\beta$-transformations by  \cite{Tan2011}, and for
conformal iterated function systems (including the continued fraction system)  by  \cite{Seuret2013}.
Further developments can be found in
\cite{He2023QuantitativeRP,persson2024,koivusalo2018,Yuan2023}.

As an analogue of the badly approximable set,
for a given point $y\in X$, define
\[
\mathrm{Bad}(T,y)
=
\left\{
x\in X:
\liminf_{n\to\infty}\rho(T^n x,y)>0
\right\}.
\]
Thus, $\mathrm{Bad}(T,y)$ consists of points whose orbits eventually remain
a positive distance from $y$.
For the Gauss map $T_G$, the set $\mathrm{Bad}(T_G,0)$ coincides with the classical set $\mathrm{BAD}$ and is known to be winning in the sense of Schmidt’s game \cite{Schmidt1966}, hence has full Hausdorff dimension.
Winning properties of the set $\mathrm{Bad}(T,y)$ have been proved for various classes of expanding maps, including $\beta$-transformations and the Gauss map, as well as for toral endomorphisms; see \cite{Tseng2009,Hu2014,Broderick2011,Hu2025}.

We also consider the non-recurrent set
\[
N(T)
=
\left\{
x\in X:
\liminf_{n\to\infty}\rho(T^n x,x)>0
\right\}.
\]
The points in $N(T)$ have orbits that eventually
remain a positive distance from their
initial positions. 
The Hausdorff dimension and winning properties of such sets have
 been investigated in expanding and product dynamical systems
\cite{yuan2022,huang2025}.

Although shrinking target sets, quantitative recurrence sets, badly
approximable sets and non-recurrent sets have been studied separately, less
is known about the intersections that combine ``(self)" approximation and avoidance
conditions.

A closely related intersection problem has recently been studied in classical
Diophantine approximation. For $\eta\in\mathbb R\setminus\mathbb Z$,
let
\[
\mathrm{Bad}^{\eta}
=
\left\{
x\in[0,1]:
\inf_{q\geq1} q\|qx-\eta\|>0
\right\},
\]
where $\|\cdot\|$ denotes the distance to the nearest
integer.
With the above definition of $W(\psi)$, Hussain and Simmons
\cite{HussainSimmons2026}
proved that, for every $\tau\geq1$ and
$\psi_\tau(q)=q^{-(\tau+1)}$,
\[
\dim_{\mathrm H}
\bigl(W(\psi_\tau)\cap\mathrm{Bad}^{\eta}\bigr)
=
\dim_{\mathrm H}W(\psi_\tau)
=
\frac{2}{\tau+1}.
\]
Thus, for every $\tau\geq1$, the Hausdorff dimension of the
$\psi_\tau$-well-approximable set $W(\psi_\tau)$ is unchanged upon
intersection with the inhomogeneous badly approximable set
$\mathrm{Bad}^{\eta}$.

In several dynamical settings, quantitative recurrence sets satisfy a
large intersection property introduced by Falconer \cite{Falconer1990}.
For $s\in(0,m]$, 
the class $\mathcal G^s(\mathbb R^m)$ consists of $G_\delta$
sets with a large intersection property.
 In particular, it is closed under
countable intersections, and every $F\in\mathcal G^s(\mathbb R^m)$ satisfies
\(\dim_{\mathrm H}F\geq s
\), where $\dim_{\mathrm H}$ denotes the Hausdorff dimension.
We write $\mathcal G^s(\mathbb T^m)$ for the corresponding
large intersection class on the torus.
 Hu and Persson \cite{persson2024} proved that, for hyperbolic
toral automorphisms of \(\mathbb T^2\) and the function $\tilde\psi(n,x)=e^{-n\alpha}$, there is an  $s_{\tilde\psi}>0$ such that
\[
    R\langle T,\tilde\psi\rangle\in\mathcal G^{s_{\tilde\psi}}(\mathbb T^2) \quad
   \text{and} \quad
    \dim_{\mathrm H}R\langle T,\tilde\psi\rangle=s_{\tilde\psi}.
\]
For radii of the form
\(\tilde\psi(n,x)=\exp(-S_n f(x))\), where $f$ is positive and
continuous,
analogous large intersection results for
$\beta$-transformations and the Gauss map follow from
\cite{He2025MTP}.

The set $\mathrm{Bad}(T,y)$ behaves differently.
Existing results establish winning
properties in the sense of Schmidt's game or related variants; see
\cite{Tseng2009,Hu2014,Broderick2011,Hu2025}. These results do not imply that
$\mathrm{Bad}(T,y)$ belongs to a Falconer large intersection class.
The
difference can already be seen for a $\beta$-transformation. 
For $\beta>1$,
let
\[
    T_\beta(x)=\beta x-\lfloor\beta x\rfloor,\qquad x\in[0,1).
\]
 Fix \(\tau>0\) and define
\[
W_{\tau}(T_{\beta}, y)
:=
\left\{
x\in[0,1]:
\lvert T_\beta^n x-y\rvert<e^{-\tau n}
\ \text{for infinitely many }n\in\mathbb{N}
\right\}.
\]
By \cite{He2025MTP}, 
\(W_{\tau}(T_{\beta}, y)\in\mathcal{G}^{s_{\tau}}([0,1])
\)
for some \(s_{\tau}>0\). 
By the definition of
\(\mathcal G^{s_{\tau}}([0,1])\), the set \(W_{\tau}(T_{\beta}, y)\) is a dense
\(G_\delta\) subset of \([0,1]\). Hence, \(W_{\tau}(T_{\beta}, y)\) is residual.
On the other hand,
every
$x\in W_{\tau}(T_{\beta}, y)$ satisfies
\[
\liminf_{n\to\infty}\lvert T_\beta^n x-y\rvert=0,
\]
so
\[
W_{\tau}(T_{\beta}, y)
\subset
[0,1]\setminus\mathrm{Bad}(T_\beta,y).
\]
Since the complement of the residual set $W_{\tau}(T_{\beta}, y)$ is meagre,
$\mathrm{Bad}(T_\beta,y)$ is meagre. 
 Since
\([0,1]\) is a Baire space and every member of
\(\mathcal{G}^{s}([0,1])\) is residual, we conclude that
\[
\mathrm{Bad}(T_\beta,y)
\notin\mathcal{G}^{s}([0,1])
\quad
\text{for every} \quad 0<s\leq1.
\]
Consequently, even when \(R(T,\psi)\) belongs to a Falconer
large intersection class, the size of
\[
R(T,\psi)\cap\mathrm{Bad}(T,y)
\]
cannot be determined solely from the countable-intersection property
of Falconer classes.

The main objective of this paper is to study the intersection \(R(T,\psi)\cap\mathrm{Bad}(T,y)\). 
The points in
\(R(T,\psi)\cap\mathrm{Bad}(T,y)\) satisfy two qualitatively different
orbit conditions: their orbits return close to their initial positions
at a prescribed rate, while eventually remaining uniformly separated
from a given target \(y\).
Under the hypotheses of our main theorems, the avoidance
condition does not reduce the Hausdorff dimension. More precisely,
\[
\dim_{\mathrm H}
\bigl(R(T,\psi)\cap\mathrm{Bad}(T,y)\bigr)
=
\dim_{\mathrm H}R(T,\psi).
\]

\subsection{Setting and main results}
Now, we give our  dynamical setting and state the main results. 
Throughout
the paper, $T$ is an expanding Markov map.
This class of  maps provides
a convenient framework in which quantitative
recurrence and avoidance properties can be treated simultaneously. 
It includes,
among other examples, sofic \(\beta\)-transformations, Gauss maps, tent maps and
cookie-cutter systems.
\begin{definition}\label{1.1}
		Let $X=\bigcup\limits_{i\in \Lambda}\overline{I(i)}\subset [0,1]$, 
        where  $\Lambda=\{1,2,\dots,d\}$  is a  finite or  countable ($d=+\infty$) index set with  $\sharp\Lambda\geq2$, $\{I(j)\}_{j\in\Lambda}$  are disjoint open intervals in $[0,1]$, and   $\overline{I(i)}$ is the closure of $I(i)$.
		A map $T:X \rightarrow[0,1]$ is called  an expanding  Markov map (EMM) if $T$ satisfies the following conditions.
		\begin{enumerate}[(i)]
			\item (\textbf{uniform ~lower ~ bound ~for ~derivative}) There exists $\alpha>0$ such that for any  $x\in \bigcup\limits_{i\in \Lambda}I(i)$, the derivative of $T$ at $x$ exists and satisfies $ |T^\prime(x)|\geq \alpha$.
			\item (\textbf{expanding})
            There exist \(\lambda>1\) and \(K\geq1\) such that for all \(x\in X\),
           if for all \(0\leq j\leq K-1\), \(T^j(x)\in\bigcup\limits_{i\in \Lambda}I(i)\), then
			\begin{equation*}
				\left|(T^K)^\prime(x)\right|  \geq \lambda.
			\end{equation*}
			\item (\textbf{weak} $ C^{1+\delta} $) There exist $0<\delta<1$ and $C>0$ such that for any  $i\in\Lambda$ and  any $x,y\in I(i)$,
			\begin{equation}\label{1.5}
				\left| \frac{T^\prime(x)}{T^\prime(y)}-1 \right|  \leq C|x-y|^{\delta}.
			\end{equation}
			\item (\textbf{mixing})   There exist $m_0\geq 0$ and  $j_i\in \Lambda$   $(1\leq i\leq m_0)$ such that for any $j,k\in \Lambda$,
\begin{equation}\label{1.6}
    I(j)\cap T^{-1} I(j_1)\cap \cdots \cap	T^{-m_0}(I(j_{m_0}))\cap T^{-m_0-1}(I(k))\neq \emptyset.
\end{equation}
			\item (\textbf{Markov ~property})  For any $i$, $j\in\Lambda$, if $I(i)\cap T\big(I(j)\big)\neq\emptyset$,  then $I(i)\subset T\big(I(j)\big)$.
		\end{enumerate}
	\end{definition}
In the countable alphabet case, we  assume the following stronger
version of the Markov property:

(v') (\textbf{full image property})
\(T(I(j))\supset(0,1)\) for every  \(j\in\Lambda\).

\begin{remark}
Our definition is based on the definition of expanding Markov maps in Ma\~{n}\'e's book \cite[p. 167]{mane1990}. 
However, we do not assume
\( 
\mathcal{L}\left(\bigcup_{i\in\Lambda}\overline{I(i)}\right)=1,
\)
where $\mathcal{L}$ denotes the Lebesgue measure. 
This allows us to include systems whose invariant sets are fractal, such as cookie-cutter systems.
\end{remark}
The \textit{repeller} of an EMM is defined by
\[
E
=
\bigcap_{n=1}^{\infty}
\bigcup_{(i_1,\ldots,i_n)\in\Lambda^n}
I(i_1)\cap T^{-1}I(i_2)\cap\cdots\cap T^{-(n-1)}I(i_n).
\]
 We remark that
all  results in this paper are stated for subsets of the repeller $E$.

Let \(f:[0,1]\to(0,\infty)\) be a positive continuous function. 
 For any $x\in E$ denote
\[
S_nf(x)
=
\sum_{k=0}^{n-1}f(T^k x).
\]
Define the quantitative recurrence set
\[
R[T,f]
=
\left\{
x\in E:
|T^n x-x|<e^{-S_nf(x)}
\text{ for infinitely many } n\in\mathbb N
\right\}.
\]
For a  function \(\psi:\mathbb N\to(0,\infty)\), we also define
\[
R(T,\psi)
=
\left\{
x\in E:
|T^n x-x|<\psi(n)
\text{ for infinitely many } n\in\mathbb N
\right\}.
\]
The dynamical badly approximable set with respect to the target \(y\in  E\) is
defined by
\[
\mathrm{Bad}(T,y)
=
\left\{
x\in E:
\liminf_{n\to\infty}|T^n x-y|>0
\right\}.
\]

The main object of the paper is the intersection of quantitative recurrence
sets with badly approximable sets. More precisely, we set
\[
R_{\mathrm{Bad}}[T,f,y]
:=
R[T,f]\cap \mathrm{Bad}(T,y)\]
and
\[R_{\mathrm{Bad}}(T,\psi,y)
:=
R(T,\psi)\cap \mathrm{Bad}(T,y).
\]

In the statements below, $P(T,\phi)$ denotes the topological pressure of $T$
for a function $\phi$; the definition is given in
Section~\ref{pressure-function}.

Our first main theorem gives the Hausdorff dimension of \(R_{\mathrm{Bad}}[T,f,y]\).
\begin{theorem}\label{theorem-f-finite}
Let $T$ be an EMM on $X=\bigcup_{i\in \Lambda}\overline{I(i)}$ with finite index set $\Lambda$, and let $f$ be a positive continuous function defined on $[0,1]$. 
Let  \(y\in E\).
Then
\[
\dim_{\mathrm{H}}R_{\mathrm{Bad}}[T,f,y]=\dim_{\mathrm{H}}R[T,f]=s^{*},
\]
where $s^{*}$ is the unique real root of $P(T, -t(f+\log|T'|))=0$.
\end{theorem}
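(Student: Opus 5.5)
The proof splits into an upper bound, which is immediate from inclusion, and a lower bound, which carries all the work. Since $R_{\mathrm{Bad}}[T,f,y]\subset R[T,f]$, the upper bound $\dim_{\mathrm H}R_{\mathrm{Bad}}[T,f,y]\le \dim_{\mathrm H}R[T,f]\le s^*$ follows from the standard covering argument for recurrence sets. We cover $R[T,f]$, for each large $n$, by the sets $\{x\in I_n(i_1,\dots,i_n): |T^nx-x|<e^{-S_nf(x)}\}$, where $I_n$ denotes an $n$-th cylinder. Bounded distortion, from the weak $C^{1+\delta}$ condition, shows that each such set lies in a ball of radius $\approx |I_n|\,e^{-S_nf}\approx e^{-S_n(f+\log|T'|)}$. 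Summing the $t$-th powers over cylinders gives $\sum_n \exp(n(P(T,-t(f+\log|T'|))+\epsilon))$, which is finite for $t>s^*$. The monotonicity and continuity of $t\mapsto P(T,-t(f+\log|T'|))$ make $s^*$ well defined, and this step is routine for a finite alphabet.

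For the lower bound $\dim_{\mathrm H}R_{\mathrm{Bad}}[T,f,y]\ge s^*$, we build a Cantor subset with controlled symbolic structure. First fix a large $M$ and let $\Sigma_M$ be the subshift of admissible words that avoid the word $w_M(y)$, where $w_M(y)$ consists of the first $M$ symbols of $y$, together with the symbols of the two neighbouring cylinders when $y$ is an endpoint. Any $x$ whose coding eventually lies in $\Sigma_M$ satisfies $|T^nx-y|\ge c_M>0$ for all large $n$, so $x\in\mathrm{Bad}(T,y)$. Since $\Lambda$ is finite and the system is mixing, $\Sigma_M$ is, for $M$ large, a mixing subshift of finite type containing a transitive component. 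Its pressure $P_M(t):=P(T|_{\Sigma_M},-t(f+\log|T'|))$ converges to $P(T,-t(f+\log|T'|))$ as $M\to\infty$, so the root $s_M^*$ of $P_M$ tends to $s^*$.

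We then run the standard recurrence construction inside $\Sigma_M$. Given $t<s^*_M$, we take an equilibrium or Gibbs measure $\mu_t$ on $\Sigma_M$ for $-t(f+\log|T'|)$, corrected by $P_M(t)$. The Cantor set is built inductively along a rapidly increasing sequence $n_k$. At stage $k$ we append a long block of length $n_k$ that is typical for $\mu_t$ and uses only $\Sigma_M$-words. We then add a connecting word of bounded length, available by mixing within $\Sigma_M$, and finally an arbitrarily long repetition of the initial segment of the current word. This repetition forces $T^{N}x$ into a subcylinder of radius $\lesssim e^{-S_Nf(x)}$ around $x$, which is the usual periodic-repetition trick. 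The inserted repeated segment is itself a $\Sigma_M$-word, so the avoidance property is preserved throughout, apart from a bounded number of gluing positions, and those lie inside $\Sigma_M$ by the choice of connectors.

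Mass distribution then gives local dimension $\ge t-\epsilon$ at every point of the Cantor set. Letting $t\uparrow s^*_M$ and $M\to\infty$ yields the bound $s^*$.

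The main obstacle is the approximation $s^*_M\to s^*$ together with the uniformity of the construction in $M$. Pressure continuity under the removal of a single forbidden word requires care, both when $y$ is periodic or an endpoint and for non-irreducible Markov graphs. A second difficulty is that the recurrence condition $|T^nx-x|<e^{-S_nf(x)}$ requires the repetition to match $x$ to high precision: the copied block must not complete an occurrence of $w_M(y)$ across its boundaries. To handle this we would restrict to words that begin and end with a fixed $\Sigma_M$-synchronizing word, and derive the pressure convergence from the variational principle by approximating ergodic measures by ones supported on $\Sigma_M$ (Katok-type horseshoes).
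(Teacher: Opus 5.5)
The proposal has a genuine gap: the lower bound rests on a lemma that you flag but do not prove, and the route you sketch for it fails as stated.

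Your upper bound is fine. The paper simply cites the known formula for $\dim_{\mathrm H}R[T,f]$, and your covering argument is the standard proof of it.

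The lower bound depends entirely on $s_M^*\to s^*$, i.e.\ on the claim that forbidding $y|_1^M$ costs asymptotically no pressure. That lemma \emph{is} the content of the theorem; the rest of your outline is the standard recurrence construction. Without it you only obtain $\dim_{\mathrm H}R_{\mathrm{Bad}}[T,f,y]\geq\sup_M s_M^*$.

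The mechanism you suggest does not work as stated:
\begin{itemize}
\item A near-equilibrium ergodic measure $\nu$ will in general charge $I(y|_1^M)$ for every fixed $M$. So $\nu$-typical words of length $n\gg M$ contain $y|_1^M$ with positive frequency, and they are not $\Sigma_M$-words.
\item A horseshoe built from typical words of a fixed length lies in $\Sigma_M$ for all large $M$ only if it does not contain the coding of $y$. You would still have to arrange that, e.g.\ by discarding the block symbols with which shifts of $y$ can begin.
\end{itemize}

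There are also secondary problems:
\begin{itemize}
\item Forbidding a word can destroy irreducibility; forbidding $01^{M-1}$ in the full $2$-shift cuts off the fixed point $1^\infty$.
\item For merely continuous $f$ you have equilibrium states, but not Gibbs measures.
\item A single long typical block per stage gives no product structure for bounding $\mu(I_n(x))$ at intermediate levels $n$.
\end{itemize}

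The paper avoids pressure continuity altogether, and its device also proves your missing lemma.
\begin{itemize}
\item \emph{Safe word.} Put $\ell=\lfloor\log_a(M+1)\rfloor+m^*+1$. Choose $v\in\Sigma^\ell$ different from every $y|_i^{i+\ell-1}$, $1\leq i\leq 3M$, and from the corresponding subwords of the cylinders adjacent to $I(y|_1^{i+\ell-1})$. This is possible because $\#\Sigma^\ell\geq d^{\lfloor\ell/(m_0+1)\rfloor}>9(M+1)$.
\item \emph{Construction.} The Cantor set uses completely unrestricted admissible free blocks of length $r=M-\ell-2m_0$, each followed by $\bar{\mathtt v}=(\bar{\mathtt u},v,\bar{\mathtt u})$. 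The returning blocks are each followed by an adjustment word $v_k(V_{k-1})$, so that consecutive copies of $v$ are at most $2M+m_0$ apart.
\item \emph{Avoidance.} For large $n$, the word $x|_{n+1}^{n+3M+\ell-1}$ contains $v$ starting at one of its first $3M$ positions. Hence it is neither $y|_1^{3M+\ell-1}$ nor an adjacent word, and $|T^nx-y|$ stays bounded below.
\item \emph{Dimension.} Because the free blocks are unrestricted, the product weights $e^{-t_r(f)S_rh(z_u)}$ carry the full pressure, with $t_r(f)\to t(f)$. The inserted words have density $(\ell+2m_0)/M\to0$ and cost at most a factor $\lambda_2^{\delta n_k/3}$, with $\delta$ arbitrary.
\end{itemize}

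The same observation applied to plain concatenations $u_1\bar{\mathtt v}u_2\bar{\mathtt v}\cdots$ shows that they lie in your subshift $\Sigma_L$ with $L=3M+\ell-1$. Since $P_L$ is nondecreasing in $L$, this gives $\liminf_L P_L(t)\geq P(T,-t(f+\log|T'|))$. With that lemma, and fixed-length product blocks in place of single long typical blocks, your scheme goes through, but it then essentially becomes the paper's argument.
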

The next theorem provides the corresponding result for countable alphabets.
\begin{theorem}\label{theorem-f-countable}
Let $T$ be an EMM on $X=\bigcup_{i\in \Lambda}\overline{I(i)}$ with countable index set $\Lambda$, and let $f$ be a positive continuous function defined on $[0,1]$.
Let  \(y\in E\).
Then
\[
 \dim_{\mathrm{H}}R_{\mathrm{Bad}}[T,f,y]=\dim_{\mathrm{H}}R[T, f]= \inf\{t \geq 0 : P(T, -t(f+\log|T'|))\leq 0\}.
\]
\end{theorem}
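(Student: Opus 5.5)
The upper bound is immediate, since $R_{\mathrm{Bad}}[T,f,y]\subset R[T,f]$. We will therefore take as known (or prove along the lines of the finite case, following Seuret--Wang for conformal IFS) that $\dim_{\mathrm H}R[T,f]\le s_\infty:=\inf\{t\ge0: P(T,-t(f+\log|T'|))\le 0\}$. The covering argument goes as follows. For each $n$, the points of $R[T,f]$ with a return at time $n$ lie, inside each $n$-th order cylinder $I_n(i_1,\dots,i_n)$, in a set of diameter $\lesssim e^{-S_nf(x)}|I_n(i_1,\dots,i_n)|$. Bounded distortion (weak $C^{1+\delta}$ together with expansion) gives $|I_n|\asymp |(T^n)'(x)|^{-1}$, so the $t$-cost of this cover is comparable to $\sum_{|w|=n}\exp\bigl(\sup_{I_w} -t(S_nf+S_n\log|T'|)\bigr)$. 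For $t>s_\infty$ this sum decays exponentially and the series over $n$ converges. The lower bound is the whole content, and the task is to build a Cantor subset of $R[T,f]\cap\mathrm{Bad}(T,y)$ of dimension close to $s_\infty$.

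The lower bound reduces to finite subsystems. Given $t<s_\infty$, we have $P(T,-t(f+\log|T'|))>0$. Using the approximation property of pressure for countable Markov systems with the full image property (v'), choose a finite sub-alphabet $\Lambda_N=\{1,\dots,N\}$ whose restricted pressure $P_N(T,-t(f+\log|T'|))$ is still positive. Then enlarge to a block system that avoids $y$. Fix a large level $M$ and let $\mathcal W_M$ be the set of words $w\in\Lambda_N^M$ whose cylinder closure stays at distance at least some $c>0$ from $y$. Any given $y$ lies in at most two adjacent level-$M$ cylinders, so removing the $O(1)$ words whose cylinders are near $y$ barely changes the pressure for large $M$. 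Concretely, the restricted pressure on the full shift over $\mathcal W_M$, normalised by $M$, tends to $P_N$ as $M\to\infty$, because full images let words be concatenated freely. Let $E_{N,M}$ be the repeller of the induced system built from the words in $\mathcal W_M$. Every orbit point $T^kx$ with $x\in E_{N,M}$ lies in a cylinder of some concatenation of allowed blocks. To guarantee that $T^kx$ stays away from $y$ even at times $k$ that are not multiples of $M$, we also require that every subword of length $M$ of any concatenation avoids a fixed neighbourhood of the coding of $y$. This is done by forbidding the finitely many blocks that could create a length-$M$ window close to the code of $y$, which again costs negligible pressure. Points of $E_{N,M}$ then lie in $\mathrm{Bad}(T,y)$, apart from the countable set of endpoint issues when $y$ is a cylinder endpoint. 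Those are handled by the choice of $c$, since finitely many branches have derivatives bounded away from $0$ and $\infty$.

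We then apply the finite-alphabet theorem. The induced system on $E_{N,M}$ is a finite-alphabet EMM-type (cookie-cutter) system, and the recurrence construction behind Theorem~\ref{theorem-f-finite} is to be carried out inside it. At a sparse sequence of times $n_k$, we force a return: the next block of digits copies the initial prefix of length $\approx S_{n_k}f/\log|T'|$-scale. This copying must itself use only allowed blocks, which is automatic because the prefix of $x$ consists of allowed blocks and $E_{N,M}$ is a full shift on $\mathcal W_M$. The mass distribution principle then gives dimension at least the root $s_{N,M}$ of $P_{N,M}(-s(f+\log|T'|))=0$, which is at least $t-\varepsilon$. Letting $t\uparrow s_\infty$ finishes the proof.

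The main obstacle is the avoidance step combined with copying. We must ensure that the forced return windows, which duplicate a prefix of $x$, never bring the orbit near $y$, and that shift positions straddling block boundaries are controlled. We will first try to handle this by choosing $\mathcal W_M$ so that the language of concatenations has no window within a fixed cylinder of $y$ of depth $L\ll M$. The pressure loss then has to be estimated uniformly through bounded distortion.
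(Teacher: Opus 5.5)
Your proposal is correct in outline. Its outer layer matches the paper: the upper bound comes from $R_{\mathrm{Bad}}[T,f,y]\subset R[T,f]$ and the known formula, and the lower bound is proved on $A_N$ and then $N\to\infty$. The finite-alphabet step, however, is organised differently.

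\textbf{How the paper does it.} The paper never isolates a block subsystem contained in $\mathrm{Bad}(T,y)$. On $A_N$ (with $m_0=0$) it reruns the construction of Section~\ref{proof-main-theorem-finite}:
\begin{itemize}
\item each free word of length $M-\ell$ is followed by a fixed safe word $v\notin\mathcal U(M,\ell)$;
\item the returning block after time $n_k$ is a prefix of $W_k^\infty$ of arbitrary length $m_k$;
\item a repair word $v_k(V_{k-1})$ plus padding is inserted so that $v$ recurs within $2M+m_0$ symbols.
\end{itemize}
Avoidance is then checked pointwise through $\mathcal U(M,\ell)$.

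\textbf{How you do it.} You decouple the two requirements. First you build a full shift on $M$-blocks, every point of which avoids a fixed cylinder around $y$. Then you run a plain recurrence construction for the induced map $T^M$ with potential $S_Mf$, using block-aligned return times and copies. The copies are then automatically safe and no repair words are needed.

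This is a genuine simplification for Theorem~\ref{theorem-f-countable}, but it uses the freedom to take $n_k\in M\mathbb N$. In Theorems~\ref{main-theorem-finite} and~\ref{theorem-countable} the $n_k$ must be drawn from $\{n:\psi(n)>\gamma_1^{-n}\}$. That is why the paper builds the non-aligned construction once and reuses it. Also, $S_Mf$ need not be continuous on $[0,1]$, so you must rerun the construction for $(T^M,S_Mf)$ rather than quote Theorem~\ref{theorem-f-finite}; you do indicate this.

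\textbf{The step you leave open.} This is where the real work lies, and it needs more than you state.
\begin{itemize}
\item Once windows straddling block boundaries are excluded, you are no longer removing $O(1)$ words.
\item Since $f$ is only continuous, $S_Mf$ varies by $o(M)$, not $O(1)$, on $M$-cylinders. Bounded distortion alone therefore does not control the loss.
\item What you actually need is that the retained blocks carry at least a fraction $e^{-o(M)}$ of $\sum_{w\in A_N^M}e^{-tS_Mh(z_w)}$.
\end{itemize}

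Read literally, ``forbid every block that could create a bad window'' can fail. Take $N=2$ and $y=(1,2,2,\dots)$. The rule forbids every block starting with $2$, every block ending in $(1,2^{k})$ with $0\le k\le L-2$, and every block containing $y|_1^L$. Together these exclude all of $A_2^M$.

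Two fixes work:
\begin{itemize}
\item Forbid only suffixes $y|_1^j$ with $j\ge L/2$ and prefixes $y|_{j+1}^L$ with $L-j>L/2$, taking $L$ to be a small fixed fraction of $M$.
\item More simply, take $\mathcal W_M=\{(u,v):u\in A_N^{M-\ell}\}$ with $v\in A_N^{\ell}$ not a subword of $y|_1^{M+\ell-1}$; this is possible once $N^\ell>M$. Every window of length $M+\ell-1$ in a concatenation then contains $v$. The normalised pressure loss tends to $0$ because $\ell\approx\log_N M$. This recovers exactly the paper's safe-block device, now with aligned returns.
\end{itemize}
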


Since every orbit considered below lies in $E\subset(0,1)$, we may replace
$\psi$ by $\min\{\psi,1\}$.
We use this normalization throughout and
continue to write $\psi$. In particular, $0<\psi(n)\leq1$.

To state the results on $R_{\mathrm{Bad}}(T,\psi,y)$, we define $\gamma:=\gamma(\psi)\in [1,\infty]$ by
\[
\log\gamma
=
\liminf_{n\to\infty}
\frac{-\log\psi(n)}{n}.
\]
\begin{theorem}\label{main-theorem-finite}
    Let $T$ be an EMM on $X=\bigcup_{i\in \Lambda}\overline{I(i)}$ with finite $\Lambda$, and let $\psi$ be a positive function defined on $\mathbb{N}$. Let $y\in E$.
    \begin{enumerate}[$(1)$]
        \item If $1\leq \gamma<\infty$, then \[ \dim_{\mathrm{H}} R_{\mathrm{Bad}}(T,\psi,y) = \dim_{\mathrm{H}}R(T,\psi) = t^*, \]
        where $t^*$ is the unique real root of $P(T, -t(\log \gamma+\log|T'|))= 0$.
    
        \item If $\gamma=\infty$, then $\dim_{\mathrm{H}} R_{\mathrm{Bad}}(T,\psi,y) = \dim_{\mathrm{H}}R(T,\psi) = 0$.  
    \end{enumerate}
\end{theorem}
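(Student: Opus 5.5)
Since $R_{\mathrm{Bad}}(T,\psi,y)\subset R(T,\psi)$, the upper bounds come for free. We only need $\dim_{\mathrm H}R(T,\psi)\le t^*$ in case (1) and $\dim_{\mathrm H}R(T,\psi)=0$ in case (2). For this we reduce to Theorem~\ref{theorem-f-finite} with constant potentials.

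For $1\le\gamma<\infty$, fix $\epsilon>0$. By the definition of $\gamma$ we have $\psi(n)<(\gamma e^{-\epsilon})^{-n}=e^{-n(\log\gamma-\epsilon)}$ for all large $n$. Hence $R(T,\psi)\subset R[T,f_\epsilon]$ with the constant function $f_\epsilon\equiv\log\gamma-\epsilon$. When $\gamma=1$ this constant is negative, so we handle it separately: the trivial bound $\dim_{\mathrm H}R(T,\psi)\le\dim_{\mathrm H}E$ already agrees with $t^*$, because $P(T,-t\log|T'|)=0$ at $t=\dim_{\mathrm H}E$ by Bowen's formula. For $\gamma>1$ and small $\epsilon$ the function $f_\epsilon$ is positive, and Theorem~\ref{theorem-f-finite} gives $\dim_{\mathrm H}R[T,f_\epsilon]=s_\epsilon$, the root of $P(T,-t(\log\gamma-\epsilon+\log|T'|))=0$. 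The map $(c,t)\mapsto P(T,-t(c+\log|T'|))=P(T,-t\log|T'|)-tc$ is continuous and strictly decreasing in $t$, so $s_\epsilon\to t^*$ as $\epsilon\to0$. For $\gamma=\infty$, we get $R(T,\psi)\subset R[T,M]$ for every constant $M>0$. The root $s_M$ of $P(T,-t\log|T'|)=tM$ tends to $0$ as $M\to\infty$, since the left side is bounded by $\log\sharp\Lambda$ for $t\in[0,1]$. This proves case (2) entirely.

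For the lower bound in case (1), I would go the other way: find $x$-independent potentials $f$ with $R_{\mathrm{Bad}}[T,f,y]\subset R_{\mathrm{Bad}}(T,\psi,y)$ along a subsequence. The difficulty is that $\psi(n)\ge e^{-n(\log\gamma+\epsilon)}$ holds only along some sequence $n_k\to\infty$, not for all $n$. So the inclusion $R[T,\log\gamma+\epsilon]\subset R(T,\psi)$ fails in general, and Theorem~\ref{theorem-f-finite} cannot be applied as a black box. I therefore expect to rerun the Cantor-set construction underlying the lower bound of Theorem~\ref{theorem-f-finite}, with the return times restricted to the sparse sequence $\{n_k\}$. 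Concretely, I would build a nested family of cylinders with two kinds of blocks. Long free blocks are drawn from the subshift avoiding a fixed small neighbourhood of $y$; that subshift is a mixing SFT after passing to words of length $N$ that avoid a cylinder containing $y$. Inserted between them are recurrence blocks, in which the word of length $n_k$ is repeated (glued by the mixing connector of length $m_0$). Repeating the word makes $|T^{n_k}x-x|\le e^{-n_k(\log\gamma+\epsilon)}\le\psi(n_k)$. The avoidance condition must also survive the repetition, so the repeated word and the connectors are chosen from $y$-avoiding words as well. The key point is that the pressure of the $y$-avoiding subsystem tends to $P(T,\cdot)$ as the avoided cylinder shrinks. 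This follows from the variational principle and approximation of equilibrium measures by measures on subshifts missing a small cylinder around $y$, using that $y$ is a single point.

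The main obstacle is the mass distribution estimate for this Cantor set. Put on it a measure built from Gibbs measures of $-t(\log\gamma+\log|T'|)$ for the avoiding subsystem, with $t$ slightly below $t^*$. The recurrence blocks cost a factor of about $\gamma^{-tn_k}|(T^{n_k})'|^{-t}$. Because $\{n_k\}$ may be very sparse, the free blocks can be taken much longer than the preceding construction, so that the lengths of the level-$k$ cylinders are controlled. Then the standard covering argument, which compares balls with cylinders using bounded distortion from condition (iii), gives $\mu(B(x,r))\lesssim r^{t-\epsilon}$. Letting the avoided neighbourhood shrink and $\epsilon\to0$ then yields $\dim_{\mathrm H}R_{\mathrm{Bad}}(T,\psi,y)\ge t^*$, completing case (1).
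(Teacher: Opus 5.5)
Your upper bound is correct, and it takes a slightly different route from the paper. The paper simply quotes the formula for $\dim_{\mathrm H}R(T,\psi)$ from Yuan--Li, including the value $0$ when $\gamma=\infty$. You instead derive it from Theorem~\ref{theorem-f-finite} with constant potentials, using:
\begin{itemize}
\item the inclusion $R(T,\psi)\subset R[T,\log\gamma-\epsilon]$;
\item continuity in $c$ of the root of $P(T,-t\log|T'|)=tc$;
\item Bowen's formula $t(\Lambda,1)=\dim_{\mathrm H}E$ for $\gamma=1$;
\item the bound $s_M\le\log\sharp\Lambda/M$ for $\gamma=\infty$.
\end{itemize}
Each step checks out. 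The gain is that the $\psi$-statement becomes a formal consequence of the $f$-statement.

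For the lower bound, your top-level plan is the paper's. You rerun the construction of Theorem~\ref{theorem-f-finite} with $f\equiv\log\gamma_1$ for some $\gamma_1>\gamma$ (your $\gamma e^{\epsilon}$), choose each $n_k$ from $\{n:\psi(n)>\gamma_1^{-n}\}$, and let $\gamma_1\downarrow\gamma$. Your choice of $\gamma e^{\epsilon}$ also covers $\gamma=1$, so it replaces the paper's $\overline\psi=\min\{\psi,\alpha^{-n}\}$ device. Restricting the return times is legitimate because that construction only imposes lower bounds on each $n_k$. Reusing it verbatim finishes the proof.

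The concrete redesign you describe, however, has a hole.
\begin{itemize}
\item You put free blocks, connectors and repeated words inside the subshift avoiding a cylinder around $y$, and you assert this subshift is mixing. Forbidding one long word can disconnect a mixing SFT. For the doubling map, once $10^{N-1}$ is forbidden, no word starting with $1$ can be followed, through any connector, by $0^{N-1}$.
\item Without mixing (indeed aperiodicity) of the avoiding system, you have neither connectors nor cycles of the exact length $n_k$ through the initial word. Exact length matters, because $\psi$ is uncontrolled away from the selected times.
\item Your variational-principle justification of the pressure approximation is only gestured at.
\end{itemize}

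The paper avoids all of this. Free blocks are unrestricted, and gluing uses the original connector $\bar{\mathtt u}$. Padding by $q_k$ makes $|W_k|=n_k$ exactly. A single safe word $v\notin\mathcal U(M,\ell)$ is inserted every $M$ symbols, so no window of length $3M+\ell-1$ can match $y$ or its adjacent cylinders. The pressure cost is only $O(\ell/M)=O(\log M/M)$, which is in fact a constructive proof of the approximation fact you invoke.

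Two smaller points.
\begin{itemize}
\item Repeating one copy of the length-$n_k$ word only gives $|T^{n_k}x-x|\le|I(x|_1^{n_k})|\asymp|(T^{n_k})'(x)|^{-1}$. You must repeat, periodically, a prefix of length $m_k$ with $|I(x|_1^{m_k})|<\gamma_1^{-n_k}$.
\item The connector must occupy the last $m_0$ of the first $n_k$ symbols, as in the paper, where $W_k$ ends in $\bar{\mathtt u}$. If it sits between $W_k$ and its copy, the return happens at time $n_k+m_0$ instead of $n_k$.
\end{itemize}
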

For countable alphabets, we prove the following result for
$1\leq\gamma<\infty$.
\begin{theorem}\label{theorem-countable}
		Let $T$ be an EMM on $X=\bigcup_{i\in \Lambda}\overline{I(i)}$ with countable $\Lambda$.  
		Let $\psi$ be a positive function. 
        Let $y\in E$.
        Suppose that $1\leq \gamma<\infty$.
        Then  
		\[
			\dim_{\mathrm{H}} R_{\mathrm{Bad}}(T,\psi,y) = \dim_{\mathrm{H}}R(T,\psi) = \inf\big \{t\geq0: P(T, -t(\log \gamma+\log|T'|))\leq 0\big \}.  
		\]
\end{theorem}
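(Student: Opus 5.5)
The plan is to reduce Theorem~\ref{theorem-countable} to Theorem~\ref{theorem-f-countable} for the upper bound and to a Cantor-type construction inside finite subsystems for the lower bound. Write $s_\gamma:=\inf\{t\geq0: P(T,-t(\log\gamma+\log|T'|))\leq 0\}$. Since $R_{\mathrm{Bad}}(T,\psi,y)\subset R(T,\psi)$, it suffices to prove $\dim_{\mathrm H}R(T,\psi)\leq s_\gamma$ and $\dim_{\mathrm H}R_{\mathrm{Bad}}(T,\psi,y)\geq s_\gamma$.

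\emph{Upper bound.} Fix $1\leq\gamma'<\gamma$ (if $\gamma=1$ take $\gamma'=1$ and a small constant $\epsilon>0$ instead). By the definition of $\gamma$, we have $\psi(n)\leq \gamma'^{-n}$ for all large $n$. Hence $R(T,\psi)\subset R[T,f_{\gamma'}]$ up to finitely many indices, where $f_{\gamma'}\equiv\log\gamma'$ if $\gamma'>1$. When $\gamma=1$ we use $\psi\leq 1$ directly, together with the trivial bound $\dim_{\mathrm H}E$, which equals $s_1$ by Bowen's formula for the countable system. Theorem~\ref{theorem-f-countable} with the constant function $f=\log\gamma'$ gives $\dim_{\mathrm H}R(T,\psi)\leq \inf\{t:P(T,-t(\log\gamma'+\log|T'|))\leq0\}$. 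Letting $\gamma'\uparrow\gamma$ and using that the pressure $P(T,-t(c+\log|T'|))=P(T,-t\log|T'|)-tc$ depends continuously on $c$, we obtain the bound $s_\gamma$. One must check the finiteness region of the pressure, but the shift by $-tc$ is harmless.

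\emph{Lower bound.} Choose a subsequence $n_k$ along which $-\log\psi(n_k)/n_k\to\log\gamma$. Then every $x$ with $|T^{n_k}x-x|<\gamma_+^{-n_k}$ for infinitely many $k$, where $\gamma_+>\gamma$, lies in $R(T,\psi)$. The key step is to restrict to a finite subalphabet $\Lambda_N=\{1,\dots,N\}$ and to discard all cylinders of a fixed length $\ell$ that contain or neighbour $y$; call the resulting alphabet of admissible words $\mathcal W_{N,\ell}$. The associated subsystem is a finite-alphabet EMM-like repeller (mixing is obtained via the connecting word $j_1\cdots j_{m_0}$ of \eqref{1.6}, which can be chosen disjoint from a neighbourhood of $y$ once $\ell$ is large, or otherwise kept at bounded distance). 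Points whose symbolic orbit only uses $\mathcal W_{N,\ell}$ stay a fixed distance from $y$, so they lie in $\mathrm{Bad}(T,y)$. Inside this subsystem we then run the standard recurrence Cantor construction. At level $n_k$ we take a cylinder $I_{n_k}(w)$ and the subinterval where $T^{n_k}x$ returns close to $x$, which is essentially a cylinder $I_{n_k+m}(w\,w')$ with $w'$ a prefix of $w$ of length $m\approx n_k\log\gamma_+/\lambda$-type. This copy of the prefix is admissible because the prefix is a legal word. We then use the mass distribution principle with the Gibbs-type measure for $-t(\log\gamma_++\log|T'|)$ restricted to the subsystem. This is exactly the argument behind the finite-alphabet Theorem~\ref{main-theorem-finite}, and I would invoke that theorem's proof, or its statement applied to the subsystem, to get $\dim_{\mathrm H}R_{\mathrm{Bad}}(T,\psi,y)\geq t^*_{N,\ell,\gamma_+}$, the root of the restricted pressure.

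\emph{Approximation.} Finally, let $\ell\to\infty$, $N\to\infty$ and $\gamma_+\downarrow\gamma$. The restricted pressures increase to $P(T,-t(\log\gamma+\log|T'|))$: for finite $N$, removing a shrinking neighbourhood of $y$ costs pressure tending to zero, since the excluded set has measure zero for the equilibrium state, and for countable systems pressure is the supremum over finite subsystems by the full image property (v'). Hence $t^*_{N,\ell,\gamma_+}\to s_\gamma$. I expect the main obstacle to be this last step together with the compatibility of the subsystem: the subsystem must still satisfy the Markov/mixing hypotheses, and its pressure must approximate the full one. The loss from forbidding the cylinders around $y$ has to be controlled uniformly, for instance via the variational principle by approximating an equilibrium measure with measures supported on the restricted subshift (Katok horseshoes avoiding $y$).
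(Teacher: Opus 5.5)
Your upper bound is correct, and it is a legitimate alternative to quoting \cite{Yuan2023} directly. For $1<\gamma'<\gamma$ one has $\psi(n)<\gamma'^{-n}$ for all large $n$, so $R(T,\psi)\subset R[T,\log\gamma']$. Theorem~\ref{theorem-f-countable} together with the continuity in Proposition~\ref{0.4} then gives $s_\gamma$, and $\dim_{\mathrm H}E=t(\Lambda,1)$ handles $\gamma=1$.

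The lower bound has a genuine gap, located exactly where you expect it. You claim that the root $t^*_{N,\ell,\gamma_+}$ for the subsystem of $A_N^{\mathbb N}$ whose orbits never enter $I_\ell(y)$ (or its neighbours) tends to $t(A_N,\gamma_+)$ as $\ell\to\infty$. This is the whole difficulty of the theorem, and the reason you give is wrong. For every fixed $\ell$, the set of points whose orbit ever enters $I_\ell(y)$ has \emph{full} measure for the ergodic, fully supported equilibrium state, and its generic points visit $I_\ell(y)$ with positive frequency. Only $\mu(I_\ell(y))$ tends to zero. Turning that into a lower bound for the pressure of the closed invariant set of never-entering orbits needs either small-hole escape-rate results or an explicit construction. ``Katok horseshoes avoiding $y$'' is not yet such a construction, since horseshoes built from $\mu$-typical long words do contain $y|_1^\ell$.

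The missing idea is the paper's safe block.
\begin{itemize}
\item Choose $v\in A_N^{\ell}$ with $\ell\approx\log_a M$ outside the finite family $\mathcal U(M,\ell)$. This family has at most $9M$ words and is built from the length-$\ell$ windows of $y$ and of its adjacent cylinders at positions $1,\ldots,3M$. The choice is possible because $N^\ell>9M$.
\item Insert $v$ after every free block of length $M-\ell$. Then every late window of length about $3M$ contains $v$, which forces $|T^nx-y|\geq a(M)>0$.
\item The fraction of symbols spent on $v$ is $O(\log M/M)$, so it disappears from the pressure as $M\to\infty$.
\end{itemize}
With this device you could prove your convergence claim, since concatenations $u_1vu_2v\cdots$ with short $u_i$ never contain the forbidden words. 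But the detour is then unnecessary. The paper simply runs the construction of Section~\ref{proof-main-theorem-finite} over $A_N$ with these safe blocks, $f\equiv\log\gamma_1$ and $n_k\in\{n:\psi(n)>\gamma_1^{-n}\}$. It obtains $t(A_N,\gamma_1)-\varepsilon$ and lets $\varepsilon\to0$, $N\to\infty$, $\gamma_1\downarrow\gamma$.

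Your subsystem route also has a second unresolved point. In the countable case $m_0=0$, so there is no connecting word from \eqref{1.6} to fall back on. Gluing two allowed words, for instance $w$ and its prefix $w'$ at a return time, can create a forbidden $\ell$-word across the junction, so legality of $w'$ alone does not give admissibility. Moreover, the restricted system is not an EMM in the sense of Definition~\ref{1.1} in the natural $\ell$-block presentation. There, a connecting symbol independent of $j$ in \eqref{1.6} cannot exist, because consecutive blocks must overlap. So Theorem~\ref{main-theorem-finite} cannot simply be quoted. The construction would have to be redone with a new uniform connecting word, for example a power of a symbol different from the first and last symbols of the forbidden words, which is available once $N$ is large.
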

\begin{remark}
   We do not treat the case $\gamma=\infty$ for countable alphabets.
   In the
finite-alphabet case, the derivative $|T'|$ is uniformly bounded from above, and the
super-exponential shrinking forces the corresponding Hausdorff dimension to be
zero. 
For countable alphabets, however, derivatives may be unbounded, and very
small cylinders can arise from large symbols.
Thus the case $\gamma=\infty$
depends on finer information about the growth of the branches and is not
covered by the present argument.
\end{remark}

We also consider two related intersections. For $y\in E$, define the following shrinking target sets
\[
S[T,f,y]
=
\left\{
x\in E:
|T^n x-y|<e^{-S_nf(x)}
\text{ for infinitely many } n\in\mathbb N
\right\},
\]
and
\[
S(T,\psi,y)
=
\left\{
x\in E:
|T^n x-y|<\psi(n)
\text{ for infinitely many } n\in\mathbb N
\right\}.
\]

 Define
\[
N(T)
=
\left\{
x\in E:
\liminf_{n\to\infty}|T^n x-x|>0
\right\}.
\]

We are interested in the intersections $R(T,\psi)\cap N(T)$ and
$S(T,\psi,y)\cap\mathrm{Bad}(T,y)$.
The following theorem gives the corresponding results. We
distinguish two cases: $\limsup_{n\to \infty}\psi(n)=0$ and $\limsup_{n\to \infty}\psi(n)>0$.

\begin{theorem}\label{theorem-recurrent-shrinking-psi}
Let $T$ be an EMM on
$X=\bigcup_{i\in\Lambda}\overline{I(i)}$,
let $y\in E$, and let
$\psi:\mathbb N\to(0,\infty)$.
Then the following statements hold.
\begin{enumerate}[$(1)$]
    \item If $\limsup_{n\to\infty}\psi(n)=0$, then
    \[
    R(T,\psi)\cap N(T)
    =
    S(T,\psi,y)\cap\mathrm{Bad}(T,y)
    =
    \emptyset.
    \]

    \item If $\limsup_{n\to\infty}\psi(n)>0$, then
    \[
    \begin{aligned}
    \dim_{\mathrm H}\big(R(T,\psi)\cap N(T)\big)
    =
    \dim_{\mathrm H}\big(
    S(T,\psi,y)\cap\mathrm{Bad}(T,y)
    \big)
    =\dim_{\mathrm H}E.
    \end{aligned}
    \]
\end{enumerate}
\end{theorem}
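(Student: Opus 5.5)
Part (1) is immediate from the definitions. If $\limsup_{n\to\infty}\psi(n)=0$, then any $x\in R(T,\psi)$ satisfies $|T^{n_k}x-x|<\psi(n_k)\to0$ along an infinite sequence $n_k$. Hence $\liminf_{n}|T^nx-x|=0$, so $x\notin N(T)$. In the same way, $x\in S(T,\psi,y)$ forces $\liminf_n|T^nx-y|=0$, so $x\notin\mathrm{Bad}(T,y)$. Both intersections are therefore empty.

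For part (2), the upper bound $\dim_{\mathrm H}E$ is trivial, since all sets are subsets of $E$. Fix $c>0$ and an infinite set $\mathcal N\subset\mathbb N$ with $\psi(n)>c$ for all $n\in\mathcal N$. The plan is to construct, for each $\epsilon>0$, a Cantor subset of each intersection with dimension at least $\dim_{\mathrm H}E-\epsilon$.

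The first ingredient is a subsystem with nearly full dimension. Using the pressure characterisation $\dim_{\mathrm H}E=\inf\{t:P(T,-t\log|T'|)\le0\}$, which holds for EMMs (the paper's pressure section; in the countable case, approximate by finitely many symbols), choose a large $\ell$ and a finite family $\mathcal W$ of admissible words of length $\ell$. The family should satisfy two conditions: the Cantor set $E_{\mathcal W}$ of points coded by free concatenations of words from $\mathcal W$ (joined through the mixing connectors of~\eqref{1.6}) has $\dim_{\mathrm H}E_{\mathcal W}>\dim_{\mathrm H}E-\epsilon$, and no word of $\mathcal W$ contains a fixed long block $u$ coding a small cylinder around $y$. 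The second condition costs only $\epsilon$ in pressure when $|u|$ is large, because forbidding one block decreases pressure continuously as $|u|\to\infty$. Every point whose orbit is coded by $\mathcal W$-concatenations then stays a uniform distance away from $y$. To keep the orbit away from the cylinder boundaries as well, I would also forbid blocks near the endpoints; bounded distortion from (iii) turns this symbolic separation into metric separation. The resulting set then lies in $\mathrm{Bad}(T,y)$.

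Next I would insert the approximation events. For the shrinking target set, choose a very sparse subsequence $n_k\in\mathcal N$. At time $n_k$, force the orbit to enter a fixed cylinder $J$ of diameter less than $c/2$ lying at positive distance from $y$ but within distance $c/2$ of $y$. This needs $c$ small; if $y$ is isolated the nearby cylinder still exists, since $\psi>c$ only requires proximity $c$. The cylinder $J$ is chosen disjoint from the forbidden neighbourhood, so $|T^{n_k}x-y|<c<\psi(n_k)$ while $\mathrm{Bad}$ is preserved. For $R(T,\psi)\cap N(T)$, I would use the same idea with $x$ in place of $y$. The orbit must return within $c$ of $x$ at times $n_k$ but otherwise stay at distance at least $\eta$ from $x$. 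I would fix the first block of $x$ to be a word $v$ whose cylinder has diameter less than $c/4$. At times $n_k$ the orbit is forced to enter a cylinder inside $I(v)$ that is at distance roughly $c/2$ from $x$; since it lies in $I(v)$ it is still within $c$ of $x$, so the recurrence condition holds. At all other times the $\mathcal W$-words are required to avoid $v$, which keeps the orbit away from $I(v)$ and hence from $x$. Because the $n_k$ are sparse and the inserted blocks have bounded length, the standard mass distribution argument, with a Bernoulli-type measure on $\mathcal W$-concatenations, gives local dimension at least $\dim_{\mathrm H}E_{\mathcal W}-\epsilon$.

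The main obstacle is the second condition of part (2): the orbit must come within $c$ of $x$ at the times $n_k$ yet stay at least $\eta$ away from $x$ at all large times. The hits at times $n_k$ must be close but not too close. I would handle this by fixing the positive distance explicitly through the choice of sub-cylinder, as described above. Bounded distortion then gives uniform lower and upper bounds on $|T^{n_k}x-x|$, and the argument passes through.
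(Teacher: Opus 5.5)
Part (1), the trivial upper bound, and your overall plan for (2) agree with the paper: free words, fixed words inserted at sparse times $n_k$ with $\psi(n_k)>c$, mass distribution, and finite-symbol approximation in the countable case. The gap is the avoidance step, which fails as stated.

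You only require that no \emph{single} word of $\mathcal W$ contains $u$. You then conclude that every point coded by $\mathcal W$-concatenations stays a uniform distance from $y$. This does not follow, because windows straddling the junctions between words are not controlled. For example, let $Tx=3x \bmod 1$ with $I(i)=(i/3,(i+1)/3)$ and $y=1/6$, whose coding is $(0,1,1,1,\dots)$, and let $u=(0,(1)^{|u|-1})$. The words $(1)^\ell$ and $((2)^{\ell-1},0)$ do not contain $u$. But $((2)^{\ell-1},0)\,((1)^{\ell})^K$ contains $(0,(1)^{K\ell})$, which codes a cylinder of length $3^{-K\ell-1}$ containing $y$. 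If $\mathcal W$ contains these two words (as it does when $\mathcal W$ is the set of all $u$-free words of length $\ell$), your product measure gives full mass to points with $\liminf_n|T^nx-y|=0$. If instead you forbid $u$ globally, you need two things you do not supply: the pressure-continuity statement you only assert, and a freely concatenable family inside the resulting subshift.

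The recurrence part has the same junction defect, and one more. Knowing $T^nx\notin I(v)$ gives no uniform lower bound on $|T^nx-x|$. The point $x$ may lie arbitrarily close to an endpoint of $I(v)$ while $T^nx$ lies in the adjacent cylinder near that endpoint. Also, a subcylinder of $I(v)$ cannot be at distance about $c/2$ from $x$ when $|I(v)|<c/4$.

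The paper avoids all of this by inserting safe blocks instead of forbidding blocks. After every free block $u_i^{(k)}\in\Sigma^r$, with $r=M-\ell-2m_0$ and all of $\Sigma^r$ allowed, it inserts $\bar{\mathtt v}=(\bar{\mathtt u},v,\bar{\mathtt u})$. Here $v\in\Sigma^\ell$ avoids the finite family $\mathcal U(M,\ell)$ consisting of the windows $y|_i^{i+\ell-1}$, $1\le i\le 3M$, and the corresponding windows of the neighbouring words $(y|_1^{i+\ell-1})^{\pm}$. Such a $v$ exists because $\#\mathcal U(M,\ell)\le 9M<\#\Sigma^\ell$ for $\ell\approx\log_a M$. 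Consecutive occurrences of $v$ are less than $2M$ apart, also across the inserted return words. So every late window of length $L=3M+\ell-1$ contains $v$, hence differs from $y|_1^L$ and its neighbours, which gives a uniform lower bound for $|T^nx-y|$.

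For $N(T)$, every point starts with $(j_1)^L$, and $v$ avoids $\mathcal U_F(M,\ell)$, which contains $(j_1)^\ell$ and the windows of the neighbours of $(j_1)^L$. Hence $T^nx$ eventually avoids $I((j_1)^L)$ and both adjacent cylinders. Meanwhile the return word $w=(j_1)^{t+1}$ at position $n_k+1$ puts $T^{n_k}x$ in $I(w)\supset I((j_1)^L)\ni x$, so within $c/4$ of $x$. This gives the ``close but not too close'' behaviour you wanted automatically. The safe blocks cost only a factor $(K_1c_1^2\lambda_1^{\ell+2m_0+b})^{n/M}$, i.e.\ $O(\log M/M)$ in the exponent, so no pressure-continuity result for forbidden blocks is needed.
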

We remark that the empty set results are evident. However, we include them in the theorem for completeness.

As a corollary, we obtain the following result.
\begin{corollary}\label{theorem-recurrent-shrinking-f}
Let $T$ be an EMM and let $y\in E$.
For every positive continuous function
$f:[0,1]\to(0,\infty)$, we have
\[
R[T,f]\cap N(T)
=
S[T,f,y]\cap\mathrm{Bad}(T,y)
=
\emptyset.
\]
\end{corollary}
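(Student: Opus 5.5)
The plan is to deduce Corollary~\ref{theorem-recurrent-shrinking-f} from part~(1) of Theorem~\ref{theorem-recurrent-shrinking-psi}. The direct proof is only a few lines, so I would actually write that out; the reduction is just a remark. The difficulty is that $e^{-S_nf(x)}$ depends on $x$, so we cannot simply set $\psi(n)=e^{-S_nf(x)}$. Instead we dominate it by a function of $n$ alone.

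First, since $f$ is positive and continuous on the compact interval $[0,1]$, we have $c:=\min_{[0,1]}f>0$. Every orbit point $T^kx$ with $x\in E$ lies in $[0,1]$, so $S_nf(x)\ge cn$. Hence
\[
e^{-S_nf(x)}\le \psi_0(n):=e^{-cn}\qquad\text{for all } x\in E,\ n\in\mathbb N .
\]
This gives the inclusions $R[T,f]\subset R(T,\psi_0)$ and $S[T,f,y]\subset S(T,\psi_0,y)$. Since $\limsup_{n\to\infty}\psi_0(n)=0$, part~(1) of Theorem~\ref{theorem-recurrent-shrinking-psi} then yields $R[T,f]\cap N(T)\subset R(T,\psi_0)\cap N(T)=\emptyset$, and likewise $S[T,f,y]\cap\mathrm{Bad}(T,y)=\emptyset$.

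To avoid depending on how that part of the theorem is proved, I would also give the self-contained argument. Take $x\in R[T,f]$. Then there are infinitely many $n$ with $|T^nx-x|<e^{-cn}$. Along this subsequence $n\to\infty$, so the right-hand side tends to $0$. Therefore $\liminf_{n\to\infty}|T^nx-x|=0$, which means $x\notin N(T)$.

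The shrinking target case is identical with $x$ replaced by $y$. For $x\in S[T,f,y]$ we get $\liminf_{n\to\infty}|T^nx-y|=0$, so $x\notin\mathrm{Bad}(T,y)$.

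The only point needing care is the uniform positive lower bound for $f$ along orbits. It uses continuity of $f$ on the compact set $[0,1]$, which contains $E$ and all its forward images. This holds for both finite and countable $\Lambda$, so no case split on the alphabet is needed.
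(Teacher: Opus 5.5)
Your proposal is correct and follows the paper's own proof: both set $\psi(n)=e^{-n\min f}$, use $S_nf(x)\ge n\min f$ to get $R[T,f]\subset R(T,\psi)$ and $S[T,f,y]\subset S(T,\psi,y)$, and then apply part~(1) of Theorem~\ref{theorem-recurrent-shrinking-psi}. Your self-contained liminf argument is the same reasoning the paper gives for that part of the theorem.
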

\begin{proof}
Set $m=\min\{f(x):x\in [0,1]\}$ and $\psi(n)=e^{-mn}$.
Since $S_nf(x)\geq mn$, we have
$R[T,f]\subset R(T,\psi)$ 
 and 
$S[T,f,y]\subset S(T,\psi,y)$.
As $\psi(n)\to0$, the conclusion follows from Theorem~\ref{theorem-recurrent-shrinking-psi}.
\end{proof}

We end this subsection by briefly presenting the proof strategy and the organization
of the paper.
The upper bounds in
the above theorems follow immediately from the set inclusions and the known dimension
formulae for the corresponding recurrence sets. 
For instance,
\[
R[T,f]\cap\mathrm{Bad}(T,y)\subset R[T,f].
\]
The main task is proving the matching lower bounds.

To this end, we construct some Cantor type subsets of the desired intersections. 
The construction has three types of symbolic blocks:
\textit{free blocks}, which carry the pressure and determine the dimension; \textit{returning
blocks}, which force the orbit to return close to its initial point at selected
times; and \textit{safe blocks}, which prevent the orbit from entering a 
neighborhood of the given target.
The free blocks can be chosen sufficiently long that the proportion
of symbols in safe and connecting blocks is arbitrarily small. The selected
return times are sparse enough to control the additional inserted words.
Consequently, the Hausdorff dimension of the constructed Cantor set is governed
by the same pressure function as that of the quantitative recurrence set alone.

Our paper is organized as follows.
In Section~\ref{preliminaries}, we recall some basic facts on expanding Markov
maps,  cylinders and pressure functions. 
Section~\ref{section-cantor}
describes the general Cantor construction used in the lower bound arguments.
Section~\ref{proof-main-theorem-finite} gives a detailed proof of 
Theorem~\ref{theorem-f-finite}.
Section~\ref{proof-f-countable} proves the countable alphabet analogue,
Theorem~\ref{theorem-f-countable}. Sections~\ref{therorem1.3-proof} and
\ref{proof-psi-countable}
are devoted to proving
 Theorems~\ref{main-theorem-finite} and
\ref{theorem-countable}. 
Finally, Section~\ref{proof-theorem-re-sh-f} provides the proof of
Theorem~\ref{theorem-recurrent-shrinking-psi}.

\section{Preliminaries}\label{preliminaries}
\subsection{Symbolic notation and cylinder estimates}
In this subsection, we collect some notation and basic properties of expanding Markov maps that will be used throughout the paper.
Let $T$ be an expanding Markov map (EMM) defined on
$X=\bigcup_{i\in\Lambda}\overline{I(i)}\subset [0,1].$ 
For a finite word \(u=(i_1,\ldots,i_n)\in\Lambda^n\), define the associated
cylinder by
\[
I(u)=
I(i_1,\ldots,i_n)
=
I_n(i_1,\ldots,i_n)
:=
I(i_1)\cap T^{-1}(I(i_2))\cap\cdots\cap T^{-(n-1)}(I(i_n)).
\]
If \(I(u)\neq\emptyset\), then \(u\) is called admissible and \(I(u)\) is
called an \(n\)-th order cylinder.
We denote by \(\Sigma^n\) the collection of
all admissible words of length \(n\), and by
\(\Sigma^*
\)
the collection of all finite admissible words.
We also allow the empty word $\varnothing$, set
$\Sigma^0=\{\varnothing\}$ and $I(\varnothing)=(0,1)$,
and interpret any prefix of length zero as $\varnothing$. 
For \(u\in\Sigma^*\), we write
\(|u|\) for the length of \(u\), and \(|I(u)|\) for the length of the
corresponding cylinder.

By the mixing assumption in Definition~\ref{1.1}, there exists a fixed
word
\begin{equation}\label{con}
\bar{\mathtt u}=(j_1,\ldots,j_{m_0})\in\Sigma^{m_0}
\end{equation}
which can be used to connect arbitrary admissible words. More precisely, if
\(u,v\in\Sigma^*\), then the concatenation
\((u,\bar{\mathtt u},v)
\)
is admissible.

For \(u\in\Sigma^*\),  let
\(
u^\infty:=(u,u,u,\cdots)
\)
denote the infinite repetition of \(u\). For \(a>0\), let \(u^a\) denote
the prefix of \(u^\infty\) of length \(\lceil a|u|\rceil\), where
\(\lceil t\rceil\) denotes the smallest integer greater than or equal to
\(t\). In particular, if \(a\in\mathbb N\), then \(u^a\) is the
concatenation of \(a\) copies of \(u\).

For \(x\in E\), a sequence \((i_1,i_2,\ldots)\in\Lambda^{\mathbb N}\)
is called a symbolic representation of \(x\) if
\[
\bigcap_{n\geq1}\overline{I(i_1,\ldots,i_n)}=\{x\}.
\]
\begin{remark}\label{remark2-1}
The endpoints of cylinders
 may have more than one symbolic representation. 
 Such endpoints form a countable set. Hence,  excluding/including these  points is harmless for the dimension estimates in this
paper. 
Whenever a point has more than one representation, we fix one of them.
\end{remark} 
After fixing a symbolic representation, we write
\(x=(i_1,i_2,\ldots),
\)
and
\[
    x|_k^j=i_k,i_{k+1},\ldots,i_j,
    \qquad 1\leq k\leq j.
\]
We also write
\[
    I_n(x):=I(x|_1^n).
\]
For $w\in\Sigma^n$, if an $n$-th order cylinder is immediately adjacent to
$I(w)$ on the right, we denote its word by $w^+$. The word $w^-$ is defined
similarly for the cylinder immediately adjacent to $I(w)$ on the left. 
If the
corresponding adjacent cylinder does not exist, the symbol $w^+$ or $w^-$ is
omitted whenever a family of words is defined.

The following proposition collects the admissibility, distortion, and cylinder-length estimates
used later.
\begin{proposition}[\cite{yuan2022}]\label{Pro1.4} 
Let $T$ be an EMM. 
Then the following properties hold.
\begin{enumerate}[$(1)$]
    \item For any $u_1,u_2\in\Sigma^*$, the concatenation
    $  (u_1,\bar{\mathtt{u}},u_2)$ 
    is admissible, where $\bar{\mathtt{u}}$ is defined in (\ref{con}).
    \item For any $(i_1,i_2,\dots,i_n)\in\Sigma^n$ and $i_{n+1}\in\Lambda$, 
    \begin{equation}\label{1.2}
    (i_1,i_2,\dots,i_n,i_{n+1})\in\Sigma^* \iff (i_n,i_{n+1})\in\Sigma^*.
    \end{equation}
    \item If $(i_1,i_2,\dots,i_n)\in\Sigma^n$ with $n\geq2$, then for any $1\leq k\leq n-1$, $(i_k,i_{k+1})\in\Sigma^*$.
    \item 
    We obtain the following bounded distortion property on basic intervals: there exists a constant $\widetilde K\geq1$ such that for every $u\in\Sigma^n$ and all $x,y\in I(u)$,
    \begin{equation}\label{1.3}
        \widetilde K^{-1}
        \leq
        \frac{|(T^n)'(x)|}{|(T^n)'(y)|}
        \leq
        \widetilde K .
    \end{equation}
    \item There exists $K_1\geq1$ such that for any $(i_1,i_2,\dots,i_n)\in\Sigma^n$ with $n\geq1$,
    \begin{equation*}
    K_1^{-1}\leq|I(i_1,i_2,\dots,i_n)||(T^n)'(x)|\leq K_1.
    \end{equation*}
    \item If $\Lambda$ is finite, then there exist constants $\lambda_1\geq\lambda_2>1$ and $b\in\mathbb{N}$ such that for every $u\in\Sigma^n$ and every $x\in I(u)$,
    \begin{equation}\label{6}
    \lambda_2^{n-b}
    \leq
    |(T^n)'(x)|
    \leq
    \lambda_1^{n+b},
    \qquad
    \lambda_1^{-n-b}
    \leq
    |I(u)|
    \leq
    \lambda_2^{-n+b}.
    \end{equation}
    \item There exists a constant $c_1\geq1$ such that for any $u,v\in\Sigma^*$ with $uv\in\Sigma^*$,
    \begin{equation}\label{eq-two-cylinder}
        c_1^{-1}|I(u)|\,|I(v)|
        \leq
        |I(uv)|
        \leq
        c_1 |I(u)|\,|I(v)|.
    \end{equation}
\end{enumerate}
\end{proposition}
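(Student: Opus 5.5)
Since this proposition is quoted from \cite{yuan2022}, the plan is to re-derive its items from conditions (i)--(v) (or (v')) of Definition~\ref{1.1}, handling the combinatorial items first and the metric ones afterwards.

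\textbf{Combinatorial items.} For (2), I argue by induction on $n$ using the Markov property (v). Suppose $I(i_1,\ldots,i_n)\neq\emptyset$ and $(i_n,i_{n+1})$ is admissible. Then $I(i_{n+1})\cap T(I(i_n))\neq\emptyset$, so (v) gives $I(i_{n+1})\subset T(I(i_n))$. Moreover $T^{n-1}$ maps $I(i_1,\ldots,i_n)$ onto all of $I(i_n)$; this is itself a consequence of (v) and the injectivity of $T$ on each branch. Hence the cylinder $I(i_1,\ldots,i_{n+1})$ is nonempty. The converse direction, and item (3), are immediate, since $T^{k-1}I(i_1,\ldots,i_n)\subset I(i_k)\cap T^{-1}I(i_{k+1})$. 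For (1), condition \eqref{1.6} says that $(j,\bar{\mathtt u},k)$ is admissible for all letters $j,k$. Applying (2) letter by letter to the concatenation $(u_1,\bar{\mathtt u},u_2)$, only consecutive pairs matter, and each such pair is admissible.

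\textbf{Metric items.} For (4), I write
\[
\log\frac{|(T^n)'(x)|}{|(T^n)'(y)|}=\sum_{k=0}^{n-1}\log\frac{|T'(T^kx)|}{|T'(T^ky)|}.
\]
By (iii), each term is $O(|T^kx-T^ky|^\delta)$. The points $T^kx$ and $T^ky$ lie in the same $(n-k)$-cylinder. Condition (ii) together with (i) gives $|I_m|\le C'\lambda^{-m/K}$, so the sum is dominated by a convergent geometric series. This yields $\widetilde K$.

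For (5), I use the mean value theorem together with (4): $|I(u)|\,|(T^n)'(x)|$ is comparable to $|T^n I(u)|$. By (v), $T^nI(u)=T(I(i_n))$ is a union of first-level intervals. Its length is bounded above by $1$. It is bounded below by a positive constant: in the finite case because there are finitely many such images, and in the countable case because (v') gives $T(I(j))\supset(0,1)$.

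For (6), with $\Lambda$ finite, $\sup|T'|<\infty$ on the closures of the $I(i)$ by (iii), so $\lambda_1$ can be taken to be this supremum. The lower bound uses (ii) in blocks of length $K$ and (i) for the remainder; this gives $\lambda_2=\lambda^{1/K}$ and accounts for the constant $b$. The cylinder-length bounds then follow from (5).

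For (7), I apply the chain rule $(T^{|u|+|v|})'(x)=(T^{|u|})'(x)\,(T^{|v|})'(T^{|u|}x)$, noting that $T^{|u|}x\in I(v)$, and combine this with (5) applied to $uv$, $u$ and $v$. This gives $c_1=K_1^3$.

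\textbf{Main obstacle.} I expect the main obstacle to be the distortion estimate (4) in the countable-alphabet case. There the contraction of cylinders must come from (ii) alone, and the remainder terms (fewer than $K$ steps) must be controlled using only the lower bound (i) and the H\"older condition (iii). I would first establish the uniform bound $\sup_{u\in\Sigma^m}|I(u)|\le C\lambda^{-\lfloor m/K\rfloor}$, and then feed it into the sum above.
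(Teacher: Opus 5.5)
Your proposal is correct and is essentially the standard argument that the paper defers to (Propositions 2.3--2.7 of \cite{yuan2022}): Markov-property induction for (1)--(3), H\"older distortion summed against $\sup_{u\in\Sigma^m}|I(u)|\le C\lambda^{-\lfloor m/K\rfloor}$ for (4), the mean value theorem for (5)--(6), and the chain rule for (7), with no use of $\mathcal L(\bigcup_i\overline{I(i)})=1$. One harmless imprecision: $T(I(i_n))$ need not be a union of first-level intervals (for cookie-cutter maps it contains gaps); it is only an interval containing every $I(i)$ it meets, and by mixing together with (v) it contains a fixed first-level interval, which gives the uniform lower bound in (5) even without (v').
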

These estimates are standard consequences of the Markov property, the
mixing assumption and $C^{1+\delta}$ condition. 
They follow from the arguments
in Propositions 2.3--2.7 of \cite{yuan2022}.
The assumption
\(\mathcal L\left(\bigcup_{i\in\Lambda}\overline{I(i)}\right)=1
\)
is not used in the proofs of these estimates.

We shall also use the following mass distribution principle.
\begin{proposition}[\cite{yuan2022}]\label{pro-mass}
Let $T$ be an EMM with finite alphabet $\Lambda$. 
Let $U\subset E$, and let $\mu$ be a probability measure with $\mu(U)=1$. 
Suppose that there exist constants $c>0$ and $s>0$ such that for every sufficiently small cylinder $I(i_1,\ldots,i_n)$,
\[
\mu\big(I(i_1,\ldots,i_n)\big)
\leq
c |I(i_1,\ldots,i_n)|^s .
\]
Then
\(
\dim_{\mathrm H} U\geq s.
\)
\end{proposition}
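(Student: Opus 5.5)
The plan is to reduce to the classical mass distribution principle. It says that if there are $C>0$ and $r_0>0$ with $\mu(J)\leq C|J|^s$ for every interval $J$ of length $|J|\leq r_0$, then $\dim_{\mathrm H}U\geq s$. Indeed, for any cover $\{U_i\}$ of $U$ by sets of diameter at most $r_0$, each $U_i$ lies in a closed interval $J_i$ with $|J_i|=|U_i|$. This gives $1=\mu(U)\leq\sum_i\mu(J_i)\leq C\sum_i|U_i|^s$, so $\mathcal H^s(U)\geq C^{-1}>0$. The only real work is therefore to pass from the cylinder bound to a bound on arbitrary small intervals, using the finiteness of $\Lambda$.

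The key quantity is a one-step comparability constant for cylinder lengths. Since $\Lambda$ is finite, $\kappa_0:=\min_{i\in\Lambda}|I(i)|>0$. By Proposition~\ref{Pro1.4}(7), applied with $u=(i_1,\ldots,i_n)$ and $v=(i_{n+1})$, every admissible $(i_1,\ldots,i_{n+1})$ satisfies
\[
|I(i_1,\ldots,i_{n+1})|\geq c_1^{-1}\kappa_0\,|I(i_1,\ldots,i_n)|=:\kappa\,|I(i_1,\ldots,i_n)|.
\]
Moreover, by \eqref{6} all $n$-th order cylinders have length at most $\lambda_2^{-n+b}$. Hence there is a uniform $n_0$ such that every cylinder of order at least $n_0$ is "sufficiently small" in the sense of the hypothesis, and cylinder lengths tend to $0$ uniformly in the order.

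Now fix an interval $J$ with $|J|=r$, where $r$ is small, say $r<\kappa\lambda_2^{-n_0+b}$. For each $x\in J\cap E$ that is not a cylinder endpoint, let $n(x)$ be the least $n$ with $|I_n(x)|\leq r$. Then $n(x)>n_0$, and by the choice of $n(x)$ and the comparability constant,
\[
\kappa r<|I_{n(x)}(x)|\leq r .
\]
The family of cylinders $\{I_{n(x)}(x)\}$ consists of maximal cylinders with this property, and two cylinders are either nested or disjoint. Hence these cylinders are pairwise disjoint open intervals, each of length greater than $\kappa r$ and each meeting $J$. Counting disjoint intervals of length greater than $\kappa r$ that meet an interval of length $r$, there are at most $2+\kappa^{-1}$ of them. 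By the hypothesis, each has $\mu$-mass at most $c\,r^s$. Therefore
\[
\mu(J)\leq(2+\kappa^{-1})\,c\,r^s
\]
for all intervals of length less than $\kappa\lambda_2^{-n_0+b}$, and the mass distribution principle of the first paragraph finishes the proof.

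The step needing the most care is the exceptional countable set of cylinder endpoints, which the cover above misses. I would handle it by first showing that $\mu$ has no atoms there. If $x$ is an endpoint of some cylinder, then for every $n\geq n_0$ the point $x$ lies in the closure of at most two $n$-th order cylinders, and each of them has mass at most $c\lambda_2^{(-n+b)s}$. I would try to transfer the bound to closures by arguing that $\mu(\overline{I(w)}\setminus I(w))$ is controlled by masses of arbitrarily small adjacent cylinders, using the words $w^\pm$. This forces $\mu(\{x\})=0$, and so the countable set is $\mu$-null, in line with Remark~\ref{remark2-1}. If this transfer turns out to be delicate, the fallback is to read the hypothesis as also covering closed cylinders, which is how it is used in the Cantor constructions, where $\mu$ is defined by splitting cylinder masses and is plainly non-atomic.
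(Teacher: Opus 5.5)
The paper does not prove this proposition; it only cites \cite{yuan2022}. Your main line is the standard argument and it is sound. The one-step comparability $|I(i_1,\ldots,i_{n+1})|\geq c_1^{-1}\min_{i\in\Lambda}|I(i)|\,|I(i_1,\ldots,i_n)|$ comes from Proposition~\ref{Pro1.4}(7), and this is exactly where finiteness of $\Lambda$ enters. After that, the stopping-time choice of $n(x)$, the maximality argument giving pairwise disjoint cylinders with lengths in $(\kappa r,r]$, and the count $2+\kappa^{-1}$ are all correct. Together they reduce the statement to the classical mass distribution principle.

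\textbf{Your last paragraph needs repair.} The endpoint issue does not arise.
\begin{itemize}
\item $E$ is defined as $\bigcap_n\bigcup_{w\in\Lambda^n}I(w)$ with \emph{open} cylinders. An endpoint of an $n$-th order cylinder $I(w)$ cannot lie in any open $n$-th order cylinder $I(w')$: a neighbourhood of it inside $I(w')$ would meet $I(w)$, contradicting disjointness of cylinders of the same order. So no cylinder endpoint belongs to $E$, and every $x\in E$ lies in the open cylinder $I_n(x)$ for every $n$.
\item What your cover actually misses is $J\setminus E$, not a countable set of endpoints. This set can be uncountable, for example the gaps of a cookie-cutter repeller.
\item What you need is $\mu(J\setminus E)=0$. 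This follows directly from $\mu(U)=1$ and $U\subset E$.
\end{itemize}

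Your proposed ``transfer to closures'' would in fact fail. If $x$ is an endpoint of an $n$-th order cylinder, then $x$ lies in no open cylinder of order $\geq n$. Hence $\delta_x$ satisfies the cylinder hypothesis, and no bound on masses of adjacent open cylinders can detect an atom there. Only the condition $\mu(U)=1$ with $U\subset E$ rules such atoms out. Your fallback of assuming the bound for closed cylinders changes the hypothesis, and it is unnecessary.

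\textbf{Two small slips.}
\begin{itemize}
\item The choice $r<\kappa\lambda_2^{-n_0+b}$ does not force $n(x)>n_0$, because $\lambda_2^{-n_0+b}$ is an \emph{upper} bound for lengths of $n_0$-th order cylinders. Take $r<\lambda_1^{-n_0-b}$, the lower bound in \eqref{6}, instead. Alternatively, read ``sufficiently small'' as $|I|\leq r_0$ and take $r\leq r_0$.
\item Covering sets $U_i$ with $|U_i|=0$ require $\mu$ to be non-atomic. This holds because $\mu(\{p\})\leq\mu(I_n(p))\leq c|I_n(p)|^s\to0$ for every $p\in E$, and $\mu(E^{c})=0$.
\end{itemize}
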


\subsection{Pressure functions and finite subsystem approximations}\label{pressure-function}
For a function
\(\phi:\bigcup_{i\in\Lambda}I(i)\to\mathbb R\), define
\[
Z_n(T,\phi)
:=
\sum_{u\in\Sigma^n}
\sup_{x\in I(u)}
\exp\bigl(S_n\phi(x)\bigr),
\]
where
\(
S_n\phi(x)=\phi(x)+\phi(Tx)+\cdots+\phi(T^{n-1}x).
\)
For all \(n,m\in\mathbb N\),
\[
Z_{n+m}(T,\phi)
\leq
Z_n(T,\phi)Z_m(T,\phi).
\]
Thus $\{\log Z_n(T,\phi)\}_{n\geq1}$ is subadditive, so
$\lim_{n\to\infty}n^{-1}\log Z_n(T,\phi)$ exists.
The pressure of \(\phi\) is defined by
\begin{equation}\label{8.1}
P(T,\phi)
:=
\lim_{n\to\infty}
\frac{1}{n}\log Z_n(T,\phi)=\lim_{n\to\infty}
\frac{1}{n}\log \sum_{u\in\Sigma^n}
\sup_{x\in I(u)}
\exp\bigl(S_n\phi(x)\bigr).
\end{equation}
For a positive continuous function $f$, put
\[
    h=f+\log|T'|.
\]
For $t\geq0$, write
\begin{equation}\label{8.0}
D_n(t)
:=
\sum_{u\in\Sigma^n}
\sup_{x\in I(u)}
\exp\bigl(-tS_nh(x)\bigr).
\end{equation}
Then,
\[
P(T,-th)
=
\lim_{n\to\infty}
\frac{1}{n}\log D_n(t).
\]
The supremum over a cylinder may be replaced by the value at any  point
of that cylinder without changing the pressure. Indeed, if $u\in\Sigma^n$
and $x,z\in I(u)$, then
\[
|S_nh(x)-S_nh(z)|
\leq
|S_nf(x)-S_nf(z)|
+
\left|\log\frac{|(T^n)'(x)|}{|(T^n)'(z)|}\right|.
\]
The second term is uniformly bounded by Proposition~\ref{Pro1.4}.

For finite or countable $\Lambda$, conditions~{\rm (i)--(ii)}
in Definition~\ref{1.1} give
\[
|(T^n)'(x)|
\geq
\min\{1,\alpha^{K-1}\}\lambda^{\lfloor n/K\rfloor}
\]
on every $n$-th order cylinder. Since
$|T^n(I(u))|\leq1$, the mean value theorem implies that
\[
\sup_{u\in\Sigma^n}|I(u)|
\leq
\frac{\lambda^{-\lfloor n/K\rfloor}}
{\min\{1,\alpha^{K-1}\}}
\longrightarrow0
\qquad(n\to\infty).
\]

Let $\epsilon>0$. By the uniform continuity of $f$, there
exists $q\in\mathbb N$ such that
\[
\sup_{\substack{w\in\Sigma^q\\
\xi,\eta\in I(w)}}
|f(\xi)-f(\eta)|
<\epsilon.
\]
Let $n>q$, let
$u=(i_1,\ldots,i_n)\in\Sigma^n$, and let $x,z\in I(u)$.
For $0\leq j\leq n-q-1$, we have
\(T^jx,T^jz
\in
I(i_{j+1},\ldots,i_{j+q}),
\)
and hence
\(|f(T^jx)-f(T^jz)|<\epsilon.
\)
Consequently,
\begin{align*}
|S_nf(x)-S_nf(z)|
&\leq
\sum_{j=0}^{n-q-1}
|f(T^jx)-f(T^jz)|
+
\sum_{j=n-q}^{n-1}
|f(T^jx)-f(T^jz)|
\\
&\leq
(n-q)\epsilon
+
2q\max_{\xi\in[0,1]}|f(\xi)|.
\end{align*}
It follows that
\[
\begin{aligned}
\sup_{\substack{u\in\Sigma^n\\x,z\in I(u)}}
|S_nh(x)-S_nh(z)|
\leq
(n-q)\epsilon
+
2q\max_{\xi\in[0,1]}|f(\xi)|
+
\log\widetilde K.
\end{aligned}
\]
Therefore,

\begin{equation}\label{eq-uniform-variation}
\sup_{\substack{u\in\Sigma^n\\x,z\in I(u)}}
|S_nh(x)-S_nh(z)|
=o(n).
\end{equation}

Therefore, if \(z_u\in I(u)\) is chosen arbitrarily for each
\(u\in\Sigma^n\), then
\[
\lim_{n\to\infty}\frac1n\log D_n(t)
=
\lim_{n\to\infty}\frac1n\log
\sum_{u\in\Sigma^n}e^{-tS_nh(z_u)}.
\]
Thus, the pressure is independent of the choice of the points \(z_u\).

We now recall the facts needed later.
We separate the finite and
countable alphabet cases.
\subsubsection{Finite alphabets}
Assume that $\Lambda$ is finite.
Let \(f:[0,1]\to(0,\infty)\) be
continuous and set \(h=f+\log|T'|\).
Let \(t(f)\) denote the unique real root of
\[
P(T,-t h)=0.
\]

For all $n>b$, we define $t_n(f)$ by
\begin{equation}\label{eq-tn-f}
\sum_{u\in\Sigma^n}e^{-t_n(f)S_nh(y_u)}=1,
\end{equation}
where $y_u$ denotes the midpoint of $I(u)$ and $b$ is defined in \eqref{6}.
For such $n$, we have $S_nh(y_u)>0$.
The sum in \eqref{eq-tn-f} is strictly decreasing from a value greater than one to zero, yielding existence and uniqueness.

\begin{proposition}\label{2.4}
Let $f$ be a positive continuous function defined on \([0,1]\), and let  $t_n(f)$ be as in \eqref{eq-tn-f}. Then
\[
\lim_{n\to\infty} t_n(f)=t(f).
\]
\end{proposition}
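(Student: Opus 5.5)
The plan is to compare the normalized sums defining $t_n(f)$ with the partition functions $D_n(t)$, and to use the strict monotonicity of the pressure in $t$ to trap $t_n(f)$ near $t(f)$. Write $G_n(t):=\sum_{u\in\Sigma^n}e^{-tS_nh(y_u)}$, so that $G_n(t_n(f))=1$. By \eqref{eq-uniform-variation}, the independence of the pressure from the choice of points $z_u\in I(u)$ gives
\[
\lim_{n\to\infty}\frac1n\log G_n(t)=P(T,-th)\quad\text{for every } t\geq0 .
\]
For negative $t$ the same argument applies verbatim, since only $|t|\cdot o(n)$ enters. So we may regard this as holding for all real $t$ in a compact range.

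The first key step is to check that $t\mapsto P(T,-th)$ is strictly decreasing, with a quantitative slope. On $n$-cylinders with $n>b$ we have $S_nh\geq S_n\log|T'|\geq (n-b)\log\lambda_2-\log\widetilde K$ by \eqref{6} (up to harmless constants from distortion). Hence for $t<t'$,
\[
G_n(t')\leq G_n(t)\,e^{-(t'-t)(n\log\lambda_2-C)},
\]
which yields $P(T,-t'h)\leq P(T,-th)-(t'-t)\log\lambda_2$. Consequently $t(f)$ is the unique root, $P(T,-th)>0$ for $t<t(f)$, and $P(T,-th)<0$ for $t>t(f)$.

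Next, fix $\epsilon>0$ and set $t^\pm=t(f)\pm\epsilon$. Then $P(T,-t^-h)\geq\epsilon\log\lambda_2>0$, so $G_n(t^-)>1$ for all large $n$. Similarly $P(T,-t^+h)<0$ gives $G_n(t^+)<1$ for large $n$. Since $G_n$ is strictly decreasing in $t$ (because $S_nh(y_u)>0$ for $n>b$) and $G_n(t_n(f))=1$, we get $t^-<t_n(f)<t^+$ for all large $n$. Letting $\epsilon\to0$ gives the claim.

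The only point needing care is the uniformity of the $o(n)$ error from \eqref{eq-uniform-variation} when it is multiplied by $t$. Since we only evaluate $G_n$ at the two fixed values $t^\pm$, this is immediate. I expect no real obstacle beyond recording the lower bound on $S_nh$ that gives strict monotonicity of the pressure; because $f>0$, that bound even holds with $\log\lambda_2$ replaced by $\log\lambda_2+\min f$.
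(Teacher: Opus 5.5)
Your proof is correct and follows essentially the same route as the paper. The paper settles Proposition~\ref{2.4} in one sentence by citing the definition of pressure (with the point-independence from \eqref{eq-uniform-variation}), Proposition~\ref{Pro1.4}, and uniqueness of the zero; you supply the details, using \eqref{6} to show that $t\mapsto P(T,-th)$ strictly decreases and hence changes sign at $t(f)$, which traps $t_n(f)$ between $t(f)\pm\epsilon$.
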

The convergence in Proposition~\ref{2.4} follows from the definition of
pressure function, Proposition~\ref{Pro1.4}, and the uniqueness of the zero of
$t\mapsto P\bigl(T,-t(f+\log|T'|)\bigr)$.

Let $\psi:\mathbb N\to(0,\infty)$ be a positive function and recall that 
\[
\log\gamma
=
\liminf_{n\to\infty}
\frac{-\log\psi(n)}{n}.
\]

For $1\leq\gamma<\infty$, let $t(\Lambda,\gamma)$ denote the
unique zero of $P(T,-t(\log\gamma+\log|T'|))$  when $\Lambda$ is finite.
For $n>b$, define $t_n(\gamma)$ to be the unique real root of
\begin{equation}\label{eq-tn-gamma}
\sum_{u\in\Sigma^n}
e^{-tS_n(\log\gamma+\log|T'|)(y_u)}
=
1,
\end{equation}
where  \(y_u\) is the midpoint  in \(I(u)\) and $b$ is defined in \eqref{6}. Equivalently, since
\(\log\gamma\) is constant,
\begin{equation}\label{eq-sn}
    S_n(\log\gamma+\log|T'|)(y_u)
=
n\log\gamma+\log |(T^n)'(y_u)|.
\end{equation}
As in Proposition~\ref{2.4},
$\lim_{n\to \infty}t_n(\gamma)= t(\Lambda,\gamma)$. 
\subsubsection{Countable alphabets}
Assume that \(\Lambda\) is countable.
Let $A\subset\Lambda$ be finite,
and denote by $E_A$ the set of points in $E$
whose symbolic representations use only symbols in $A$.
By the full image property, every word \(w\in A^{n}\) is admissible, where \(n\in \mathbb{N}.\)
For a function
\(\phi:\bigcup_{i\in\Lambda}I(i)\to\mathbb R,
\)
define
\[
Z_{n,A}(T,\phi)
:=
\sum_{w\in A^n}
\sup_{x\in I(w)}
\exp\bigl(S_n\phi(x)\bigr).
\]
For all \(n,m\in\mathbb N\),
\[
Z_{n+m,A}(T,\phi)
\leq
Z_{n,A}(T,\phi)Z_{m,A}(T,\phi).
\]
Consequently, the limit
\(\lim_{n\to\infty}\frac1n\log Z_{n,A}(T,\phi)
\)
exists, and we define
\[
P_A(T,\phi)
:=
\lim_{n\to\infty}\frac1n\log Z_{n,A}(T,\phi).
\]
We use the conventions $Z_{n,\Lambda}(T,\phi)=Z_n(T,\phi)$
and $P_\Lambda(T,\phi)=P(T,\phi)$. For finite $A$ with at least
two symbols and the functions $h$ used below,
\eqref{eq-uniform-variation} shows that restricting a supremum
to $I(w)\cap E_A$ does not change the limiting pressure.
Thus $P_A(T,-th)$ is also the pressure of the finite-alphabet
restriction of the original map $T$.

For a positive continuous function
\(f:[0,1]\to(0,\infty)\), define
\[
t(A,f)
:=
\inf\left\{
t\geq0:
P_A\bigl(T,-t(f+\log|T'|)\bigr)\leq0
\right\}
\]
and
\[
t(\Lambda,f)
:=
\inf\left\{
t\geq0:
P\bigl(T,-t(f+\log|T'|)\bigr)\leq0
\right\}.
\]
For $x_w\in I(w)$, also set
\[
E_{n,A,f}(t)
:=
\sum_{w\in A^n}
e^{-tS_n(f+\log|T'|)(x_w)}.
\]
Equation~\eqref{eq-uniform-variation} shows that the choice
of $x_w$ does not affect the limits in Proposition~\ref{6.5}.

The following propositions summarize the finite subsystem approximation
properties of pressure in the countable alphabet case.

\begin{proposition}[\cite{Hanus2002,Li2014}]
\label{6.1}
Let \(f:[0,1]\to(0,\infty)\) be a continuous function, and let \(T\) be an EMM with
countable alphabet \(\Lambda\). Set \(h=f+\log|T'|\). For every $t\geq0$, we have
\[
P_{\Lambda}(T,-th)
=
\sup\left\{
P_A(T,-th):
A \text{ is a finite subset of } \Lambda
\right\}.
\]
Consequently,
\(t(\Lambda,f)
=
\sup\left\{
t(A,f):
A \text{ is a finite subset of } \Lambda
\right\}.
\)
\end{proposition}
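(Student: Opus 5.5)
The plan is to establish the two inequalities separately, first for the pressures and then for the dimension thresholds.

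\emph{The inequality $P_\Lambda(T,-th)\geq\sup_A P_A(T,-th)$.} For a finite $A\subset\Lambda$ we have $A^n\subset\Sigma^n$, since every word over $A$ is admissible by the full image property. Hence $Z_{n,A}(T,-th)\leq Z_n(T,-th)$ for every $n$, and taking $\frac1n\log$ and letting $n\to\infty$ gives the bound for each $A$.

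\emph{The reverse inequality.} Here I would use the Gurevich/Mauldin--Urba\'nski approximation argument, in the form of \cite{Hanus2002,Li2014}. Fix $t\geq0$ and a finite $P'<P_\Lambda(T,-th)$; if the pressure is $+\infty$, take $P'$ arbitrarily large. Choose $n$ with $\frac1n\log Z_n(T,-th)>P'$. Since $Z_n$ is a countable sum of positive terms, there is a finite set $A_0$ such that the sum over words in $A_0^n$ already exceeds $e^{nP'}$. Define the weight of a word $w\in A_0^n$ as $a_w:=\sup_{x\in I(w)}e^{-tS_nh(x)}$.

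Now consider concatenations $w_1w_2\cdots w_k$ with every $w_i\in A_0^n$; these are admissible by the full image property. The key step is the lower bound
\[
\sup_{x\in I(w_1\cdots w_k)}e^{-tS_{kn}h(x)}\geq C^{-k}e^{-o(kn)}\prod_i a_{w_i}.
\]
To prove it, I would evaluate each factor at the image point $T^{(i-1)n}x$. Two ingredients control the error. First, the $\log|T'|$ part is handled by bounded distortion, estimate \eqref{1.3}, on each block, which costs a factor $\widetilde K^t$ per block. Second, the $f$ part is handled by the uniform variation estimate \eqref{eq-uniform-variation} applied within each block. This is the step I expect to be the main obstacle. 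The variation of $S_nf$ over an $n$-cylinder is at most $(n-q)\epsilon+2q\|f\|_\infty$, which is a per-block error of $\epsilon n+O(1)$, and this is fine once $n$ is large and $\epsilon$ is small. For countable $\Lambda$, I would check that the cylinder-diameter bound $\sup_{u\in\Sigma^n}|I(u)|\to0$ still holds; the paper established this for countable $\Lambda$ as well. The bound then gives
\[
Z_{kn,A_0}\geq\big(C^{-1}e^{-\epsilon n}e^{nP'}\big)^k,
\]
so $P_{A_0}(T,-th)\geq P'-\epsilon-\frac{\log C}{n}$. Letting $\epsilon\to0$, $n\to\infty$ and $P'\uparrow P_\Lambda$ yields the equality.

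\emph{The consequence for the thresholds.} I would use that each $t\mapsto P_A(T,-th)$ is nonincreasing, because $h\geq\min f+\log\alpha$ along long blocks; more precisely, $S_nh\geq0$ for large $n$ by the expanding condition~(ii) together with $f>0$. Since $P_A\leq P_\Lambda$, we get $t(A,f)\leq t(\Lambda,f)$. Conversely, take $t<t(\Lambda,f)$. Monotonicity gives $P_\Lambda(T,-th)>0$, and the equality just proved yields a finite $A$ with $P_A(T,-th)>0$, hence $t(A,f)\geq t$. Taking the supremum over $A$ completes the argument.
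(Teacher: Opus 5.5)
Your proof is correct, but it is not the route the paper takes.

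\textbf{The paper's route.} The paper does not reprove the approximation. It passes to $T^K$, whose branch intervals $I(i_1,\ldots,i_K)$ map onto $(0,1)$ with inverse derivatives bounded by $\lambda^{-1}<1$. It checks that $P(T^K,-tS_Kh)=KP(T,-th)$, and that restricting to branches indexed by $A_N^K$ gives $KP_{A_N}(T,-th)$. It then imports the finite-branch approximation for such uniformly contracting systems from \cite{Yuan2023} (cf.\ \cite{Hanus2002,Li2014}), using that any finite family of length-$K$ words lies in some $A_N^K$. The statement about $t(\Lambda,f)$ is simply recorded as a consequence.

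\textbf{Your route.} You run the Mauldin--Urba\'nski concatenation argument directly for $T$. A finite subsum of $Z_n(T,-th)$ already exceeds $e^{nP'}$. Bounded distortion \eqref{1.3}, together with the variation estimate behind \eqref{eq-uniform-variation}, then lets you glue $k$ blocks over $A_0$ at a cost of $e^{-t(\epsilon n+O_q(1))}$ per block. Both ingredients hold for arbitrary $K\geq1$, so you never need one-step inverse branches to contract, and the detour through $T^K$ is unnecessary. Your proof is self-contained. It also supplies the monotonicity argument ($S_nh>0$ for large $n$) that the threshold statement actually needs.

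\textbf{What each buys.} The paper's route buys brevity. It also keeps Propositions~\ref{6.1} and \ref{6.5} and the formula for $\dim_{\mathrm H}E_{A_N}$ inside one cited framework. Yours buys self-containment.

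\textbf{Cosmetic remarks.} The factor you call $e^{-o(kn)}$ is really $e^{-t\epsilon kn}$. This is harmless, because $q$ is fixed before $n\to\infty$ and $\epsilon\to0$ afterwards. Also, $P_\Lambda(T,-th)>0$ for $0\leq t<t(\Lambda,f)$ follows from the definition of the infimum alone. Monotonicity is needed only to pass from $P_A(T,-th)>0$ to $t(A,f)\geq t$.
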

\begin{proposition}[\cite{Hanus2002,Li2014}]
\label{6.5}
Let \(f:[0,1]\to(0,\infty)\) be a continuous function, and let \(T\) be an EMM with
countable alphabet \(\Lambda\). For \(A_M=\{1,\ldots,M\}\subset\Lambda\), define
\[
t_n(A_M,f)
=
\inf\{t\geq0:E_{n,A_M,f}(t)\leq1\}.
\]
Then
\begin{equation}\label{6.2}
\lim_{n\to\infty}t_n(A_M,f)=t(A_M,f), \quad \text{and}
\quad
\lim_{M\to\infty}t(A_M,f)=t(\Lambda,f).
\end{equation}
\end{proposition}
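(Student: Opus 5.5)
The first limit in \eqref{6.2} will come from a uniform lower bound on the slope of the finite-alphabet pressure $t\mapsto P_{A_M}(T,-th)$, and the second from Proposition~\ref{6.1} plus monotonicity in the alphabet. Throughout, fix $M\geq2$ and put $A=A_M$, $h=f+\log|T'|$ and $m=\min_{[0,1]}f>0$.

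\emph{Step 1: the slope of the pressure.} For $w\in A^n$ and $x\in I(w)$, Proposition~\ref{Pro1.4}(5) together with $|I(w)|\leq1$ gives
\[
S_nh(x)\geq nm+\log|(T^n)'(x)|\geq nm-\log K_1 .
\]
Hence, for $0\leq t<t'$,
\[
Z_{n,A}(T,-t'h)\leq e^{-(t'-t)(nm-\log K_1)}\,Z_{n,A}(T,-th),
\]
and therefore $P_A(T,-t'h)\leq P_A(T,-th)-(t'-t)m$. So $t\mapsto P_A(T,-th)$ is finite (it is at most $\log M$ at $t=0$), strictly decreasing and continuous. Since $P_A(T,-th)\leq\log M-tm$, it becomes negative for large $t$, so $t(A,f)$ is finite.

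\emph{Step 2: comparing $E_{n,A,f}$ with the pressure.} By \eqref{eq-uniform-variation}, $E_{n,A,f}(t)=Z_{n,A}(T,-th)\,e^{o(n)}$, with the $o(n)$ uniform for $t$ in compact sets. Moreover, once $nm>\log K_1$ all exponents $S_nh(x_w)$ are positive, so $E_{n,A,f}(t)$ is strictly decreasing in $t$. Consequently $t_n(A,f)\leq t$ if and only if $E_{n,A,f}(t)\leq1$.

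\emph{Step 3: the first limit.} Fix $\varepsilon>0$.
\begin{itemize}
\item At $t=t(A,f)+\varepsilon$, Step~1 gives $P_A(T,-th)\leq-\varepsilon m$. Hence $E_{n,A,f}(t)\leq e^{-\varepsilon mn/2}<1$ for large $n$, which yields $t_n(A,f)\leq t(A,f)+\varepsilon$.
\item If $t(A,f)>\varepsilon$, then at $t=t(A,f)-\varepsilon$ continuity and strict monotonicity give $P_A(T,-th)>0$. Hence $E_{n,A,f}(t)>1$ for large $n$, so $t_n(A,f)\geq t(A,f)-\varepsilon$.
\item If $t(A,f)\leq\varepsilon$, the lower bound $t_n(A,f)\geq0$ already suffices.
\end{itemize}
This proves $\lim_{n}t_n(A_M,f)=t(A_M,f)$.

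\emph{Step 4: the second limit.} If $A\subset B$ are finite, then $Z_{n,A}\leq Z_{n,B}$, so $P_A\leq P_B$ and $t(A,f)\leq t(B,f)$. Thus $t(A_M,f)$ is nondecreasing in $M$. Every finite $A\subset\Lambda$ is contained in some $A_M$, so
\[
\lim_{M\to\infty}t(A_M,f)=\sup_A t(A,f),
\]
and this supremum equals $t(\Lambda,f)$ by Proposition~\ref{6.1}.

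The only delicate point is the uniformity in Step~2, namely that the $o(n)$ error does not depend on $t$ in a compact range; this is immediate because the error is at most $t$ times the bound in \eqref{eq-uniform-variation}. The remaining steps are routine.
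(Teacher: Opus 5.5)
Your proof is correct, and it takes a genuinely different route from the paper. The paper does not prove Proposition~\ref{6.5}. It cites \cite{Hanus2002,Li2014}, uses \eqref{eq-uniform-variation} only to note that the choice of $x_w$ is irrelevant, and justifies applicability for $K\geq1$ by passing to $T^K$. There the branches are $I(i_1,\ldots,i_K)$ with uniformly contracting inverse branches and $P(T^K,-tS_Kh)=KP(T,-th)$, and the finite-branch pressure approximation of \cite{Yuan2023} is then invoked.

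You instead prove the first limit from scratch. The slope bound $P_A(T,-t'h)\leq P_A(T,-th)-(t'-t)m$ uses only $f\geq m>0$ and $|(T^n)'|\geq K_1^{-1}$. The sandwich $e^{-t\omega_n}Z_{n,A}(T,-th)\leq E_{n,A,f}(t)\leq Z_{n,A}(T,-th)$, with $\omega_n=o(n)$ from \eqref{eq-uniform-variation}, then pins down $t_n(A_M,f)$. This needs no contraction of individual branches, so neither the passage to $T^K$ nor the infinite-alphabet thermodynamic formalism is required for this part.

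For the second limit, your monotonicity-in-$M$ argument is the correct bridge from $\sup_A t(A,f)$ to $\lim_M t(A_M,f)$. The substantive input, however, is still Proposition~\ref{6.1}, which is exactly where the paper's $T^K$ reduction and the cited literature do their work. So your version is self-contained for the finite-level convergence and relies on outside results only where the paper does.

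One small point: you assert that $t\mapsto P_A(T,-th)$ is continuous without justification. It is true: $|T'|$ is bounded above and below on the finitely many branches in $A_M$, so the pressure is Lipschitz in $t$. You do not actually need it, though.
\begin{itemize}
\item For $0\leq t<t(A,f)$, the positivity $P_A(T,-th)>0$ follows directly from the definition of the infimum.
\item At $t(A,f)+\varepsilon$, apply your slope inequality from some $t_0\in[t(A,f),t(A,f)+\varepsilon/2)$ with $P_A(T,-t_0h)\leq0$. This gives $P_A\leq-\varepsilon m/2<0$, which suffices.
\end{itemize}
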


We explain why Propositions~\ref{6.1} and~\ref{6.5} apply
with $K\geq1$. Put $A_N=\{1,\ldots,N\}$.
For $T^K$, use the intervals $I(i_1,\ldots,i_K)$ as its branch
intervals. Each of them is mapped onto $(0,1)$ by the full image
property. The derivative of $T^K$ on each of these intervals has
absolute value at least $\lambda$, so the derivative of its
inverse has absolute value at most $\lambda^{-1}<1$.

A sequence of $n$ such branch choices specifies exactly $nK$
original symbols. Also,
\[
S_Kh(x)=S_Kf(x)+\log|(T^K)'(x)|,\qquad
\sum_{j=0}^{n-1}S_Kh(T^{jK}x)=S_{nK}h(x).
\]
Consequently, for $A=A_N$ or $A=\Lambda$,
\[
\sum_{u\in A^{nK}}\sup_{x\in I(u)}
\exp\left(-t\sum_{j=0}^{n-1}S_Kh(T^{jK}x)\right)
=Z_{nK,A}(T,-th).
\]

Taking logarithms, dividing by $n$, and passing to the
limit, we obtain
\[
P(T^K,-tS_Kh)=K P(T,-th).
\]
Similarly, restricting the branches of $T^K$ to
$I(u)$ with $u\in A_N^K$ gives the pressure
$K P_{A_N}(T,-th)$.

By the pressure approximation for
$T^K$ in \cite{Yuan2023}, its pressure is the
supremum of the pressures obtained from finite
collections of its branches.
Any finite collection of length-$K$ words uses only
finitely many symbols from $\Lambda$.
Hence, for some $N$, all these words belong to $A_N^K$.
For every $n$, the corresponding partition sum is
therefore bounded above by $Z_{nK,A_N}(T,-th)$.
Conversely, $A_N^K$ is itself a finite collection
of length-$K$ words. Consequently,
\[
K P(T,-th)
=
\sup_{N\geq2}K P_{A_N}(T,-th),
\]
and hence
\[
P(T,-th)=\sup_{N\geq2}P_{A_N}(T,-th).
\]

For each fixed $N$, consider the original map $T$
on the intervals $I(i)$, $i\in A_N$.
It satisfies conditions~{\rm (i)--(iii)} of
Definition~\ref{1.1} with the same $K$.
The full image property gives the mixing condition
with $m_0=0$, while \eqref{1.5} ensures that
$|T'|$ is bounded above on these finitely many
intervals.
Thus Proposition~\ref{Pro1.4} applies, with constants
that may depend on $N$, and $E_{A_N}$ is invariant
under $T$.
In the subsequent Cantor constructions, we use
words over $A_N$ for the map $T$.

For $1\leq\gamma<\infty$ and finite $A\subset\Lambda$, define
\[
t(A,\gamma)
:=
\inf\left\{t\geq0:
P_A\bigl(T,-t(\log|T'|+\log\gamma)\bigr)\leq0
\right\}.
\]
For the full alphabet, define
\[
t(\Lambda,\gamma)
:=
\inf\left\{t\geq0:
P\bigl(T,-t(\log|T'|+\log\gamma)\bigr)\leq0
\right\}.
\]
\begin{proposition}[\cite{Yuan2023}]
\label{0.4}
Let \(T\) be an EMM with finite or countable alphabet \(\Lambda\). 
The function
\(\gamma\longmapsto t(\Lambda,\gamma)
\)
is continuous on $[1,\infty)$. In particular,
\[
\lim_{\gamma\to1}t(\Lambda,\gamma)=t(\Lambda,1).
\]
\end{proposition}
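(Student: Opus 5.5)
The plan rests on the fact that $\log\gamma$ is a constant. For every $u\in\Sigma^n$ and $x\in I(u)$ we have $S_n(\log\gamma+\log|T'|)(x)=n\log\gamma+\log|(T^n)'(x)|$. Hence, directly from \eqref{8.1},
\[
P\bigl(T,-t(\log\gamma+\log|T'|)\bigr)=p(t)-t\log\gamma,\qquad p(t):=P(T,-t\log|T'|)\in(-\infty,+\infty],
\]
with the convention $+\infty-c=+\infty$; in the countable case $p(t)$ may be infinite for small $t$. Write $g_\gamma(t):=p(t)-t\log\gamma$. Then $t(\Lambda,\gamma)=\inf\{t\geq0:g_\gamma(t)\leq0\}$. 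The whole proof is a monotonicity argument on this two-parameter family; no regularity of $p$ beyond a strict decrease is needed.

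\emph{Step 1: uniform strict decrease of $p$.} By conditions (i)--(ii) of Definition~\ref{1.1}, as noted in Section~\ref{pressure-function}, we have $\log|(T^n)'(x)|\geq n\log\lambda/K-C_0$ on every $n$-th order cylinder, for some constant $C_0$. Inserting this into the partition sums $Z_n$ gives, for $0\leq t<s$,
\[
p(s)\leq p(t)-(s-t)\frac{\log\lambda}{K}.
\]
This holds whenever $p(t)<\infty$, and trivially otherwise. Consequently, for each fixed $\gamma\geq1$, the map $t\mapsto g_\gamma(t)$ is strictly decreasing wherever it is finite. Moreover, for fixed $t>0$, $\gamma\mapsto g_\gamma(t)$ is nonincreasing and continuous, since it is an affine function of $\log\gamma$ or identically $+\infty$.

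\emph{Step 2: strict sign on each side of $t(\Lambda,\gamma)$.} Fix $\gamma$ and write $t_\gamma=t(\Lambda,\gamma)$.
\begin{itemize}
\item If $t<t_\gamma$, then $g_\gamma(t)>0$ (possibly $+\infty$), directly from the definition of the infimum.
\item If $t>t_\gamma$, choose $t'\in[t_\gamma,t)$ with $g_\gamma(t')\leq0$. Then $g_\gamma(t')$ is finite, and Step 1 gives $g_\gamma(t)\leq g_\gamma(t')-(t-t')\log\lambda/K<0$.
\end{itemize}
One should also check that $t_\gamma<\infty$. This follows from Step 1 once $p(t_0)<\infty$ for some $t_0$, which is part of the standing framework used to define $t(\Lambda,f)$ (cf. Propositions~\ref{6.1}--\ref{6.5}, via the finite subsystems and \cite{Yuan2023}). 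I expect this finiteness check, together with the care needed for the value $+\infty$, to be the only delicate point in the countable case.

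\emph{Step 3: continuity.} Let $\gamma_k\to\gamma$ in $[1,\infty)$ and let $\varepsilon>0$.
\begin{itemize}
\item \emph{Upper bound.} Take $t=t_\gamma+\varepsilon$. Step 2 gives $g_\gamma(t)<0$, and this value is finite. Since $g_{\gamma_k}(t)=g_\gamma(t)-t(\log\gamma_k-\log\gamma)\to g_\gamma(t)$, we get $g_{\gamma_k}(t)<0$ for large $k$. Hence $t(\Lambda,\gamma_k)\leq t_\gamma+\varepsilon$.
\item \emph{Lower bound.} If $t_\gamma>0$, take $t=\max\{t_\gamma-\varepsilon,0\}<t_\gamma$. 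Step 2 gives $g_\gamma(t)>0$.
  \begin{itemize}
  \item If $g_\gamma(t)=+\infty$, then $g_{\gamma_k}(t)=+\infty$ for all $k$.
  \item Otherwise $g_{\gamma_k}(t)\to g_\gamma(t)>0$.
  \end{itemize}
  In either case $g_{\gamma_k}(s)\geq g_{\gamma_k}(t)>0$ for all $s\leq t$ and large $k$, by monotonicity in $s$. Hence $t(\Lambda,\gamma_k)\geq t$.
\end{itemize}
Together these give $t(\Lambda,\gamma_k)\to t_\gamma$. In particular $\lim_{\gamma\to1}t(\Lambda,\gamma)=t(\Lambda,1)$. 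In the finite-alphabet case $p$ is finite everywhere and the same argument applies verbatim, with $t_\gamma$ the unique zero.
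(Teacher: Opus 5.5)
The paper does not prove this statement; it imports it from \cite{Yuan2023}. Your argument is a correct, self-contained proof by a different and economical route. The natural approach would use continuity or convexity of $p(t)=P(T,-t\log|T'|)$ to locate $t(\Lambda,\gamma)$ as the zero of a jointly continuous function of $(t,\gamma)$, which needs extra care in the countable case at the left end of the set where $p$ is finite. You use only two facts: the exact affine dependence $g_\gamma(t)=p(t)-t\log\gamma$, and the uniform slope bound $p(s)\leq p(t)-(s-t)\log\lambda/K$ coming from conditions (i)--(ii).

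This treats the finite and countable cases uniformly. It covers the regime where $t(\Lambda,\gamma)$ is only an infimum, with $p=+\infty$ to its left, rather than a root. What your route does not give is that $t(\Lambda,\gamma)$ is an actual zero; the regularity route provides this when $p$ is finite and continuous near it, but continuity of $\gamma\mapsto t(\Lambda,\gamma)$ does not need it. Steps 2 and 3 are correct as written, and the lower bound is trivial when $t(\Lambda,\gamma)=0$. In the finite-alphabet case, Step 1 itself shows that the unique zero coincides with the infimum, so your \emph{verbatim} remark is justified.

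The one point you defer to the standing framework, namely $t(\Lambda,\gamma)<\infty$, needs nothing from Propositions~\ref{6.1}--\ref{6.5}. By Proposition~\ref{Pro1.4}(5), $\sup_{x\in I(u)}|(T^n)'(x)|^{-1}\leq K_1|I(u)|$. Since $n$-th order cylinders are disjoint subintervals of $[0,1]$, this gives $Z_n(T,-\log|T'|)\leq K_1$, so $p(1)\leq 0$; this is the same computation as in the proof of Claim~\ref{claim-basic-level}. Step 1 then gives $g_\gamma(t)\leq p(t)<0$ for every $t>1$ and every $\gamma\geq1$. Hence $t(\Lambda,\gamma)\leq 1$, uniformly in $\gamma$.
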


The same pressure argument applies to
$\log\gamma+\log|T'|$ for $1\leq\gamma<\infty$, including
$\log|T'|$ when $\gamma=1$. In particular,
\(\lim_{N\to\infty}t(A_N,\gamma)=t(\Lambda,\gamma).
\)
The pressure formula for Hausdorff dimension, applied to the
inverse branches of $T^K$, also gives
\[
\dim_{\mathrm H}E_{A_N}=t(A_N,1),\qquad
\lim_{N\to\infty}\dim_{\mathrm H}E_{A_N}
=t(\Lambda,1)=\dim_{\mathrm H}E;
\]
see \cite{Hanus2002,Li2014,Yuan2023}.

\section{The Cantor Construction: General Principles}
\label{section-cantor}
The lower bounds are proved using Cantor constructions in symbolic space.
These constructions use the following three types of blocks.
The free blocks are admissible words that can be chosen independently
at each stage. 
These choices are used to define the probability measure and
determine the Hausdorff dimension. The returning blocks are
copies of initial words inserted after prescribed times \(n_k\); their lengths
are chosen so that the required conditions
hold. 
The auxiliary blocks consist of the connecting blocks needed for
admissibility and the fixed words inserted to keep the orbit away from the
target.

The returning blocks and the auxiliary blocks play different roles. 
The returning blocks are generally not negligible: their contribution gives rise to the additional term \(f\) or \(\log\gamma\) in the corresponding pressure function.
The proportion of symbols occupied by auxiliary blocks can be made
arbitrarily small by taking the free blocks sufficiently long.
Hence they do not affect the limiting dimension.

The purpose of this section is to give the elementary principles used in the
constructions. 
These principles will be implemented in detail in the proof of
Theorem~\ref{theorem-f-finite}. In the later proofs we will only indicate the
necessary modifications.
\subsection{Safe blocks and avoidance}
\label{subsection-safe-blocks}
Safe blocks are used to ensure that the orbit remains a positive distance
from the target. 
In each application, we choose a finite set
\(\mathcal U\subset\Sigma^{\ell}
\)
and a word
\(v\in\Sigma^{\ell}\setminus\mathcal U.
\)
The set \(\mathcal U\) is chosen so that, if \(T^n x\) is sufficiently close
to the target \(y\), then
\(x|_{n+j}^{n+j+\ell-1}\in\mathcal U
\)
for every \(j\) in a prescribed finite range. The safe block \(v\) is
inserted so that, for every sufficiently large \(n\), there exists such a
\(j\) satisfying
\(x|_{n+j}^{n+j+\ell-1}=v.
\)
Since \(v\notin\mathcal U\), \(T^n x\) cannot be sufficiently close to
\(y\). 
Hence, the orbit eventually remains a positive distance from \(y\).

\subsection{Returning blocks}
\label{subsection-returning-blocks}

The following observation is used to force recurrence.

\begin{lemma}\label{lemma-returning-block}
Let \(x\in E\) have a symbolic representation
\(x=(i_1,i_2,\ldots).
\)
Let \(n,m\in\mathbb N\). Suppose 
\((x|_1^m)=(x|_{n+1}^{n+m}).
\)
Then
\(|T^n x-x|\leq |I(x|_1^m)|.
\)
\end{lemma}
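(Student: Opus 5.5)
The plan is to show that both $x$ and $T^n x$ lie in the closure of the cylinder $I(x|_1^m)$. Their distance is then bounded by the length of that interval.

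\emph{Step 1: $x$ lies in the closed cylinder.} By the definition of a symbolic representation,
\[
\{x\}=\bigcap_{k\geq1}\overline{I(x|_1^k)},
\]
so in particular $x\in\overline{I(x|_1^m)}$.

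\emph{Step 2: $T^n x$ has the shifted representation.} I will show that $T^n x\in\overline{I(x|_{n+1}^{n+k})}$ for every $k\geq1$.

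First consider $z\in I(x|_1^{n+k})$. Then $T^j z\in I(i_{j+1})$ for $0\le j\le n+k-1$. Hence $T^n z\in I(i_{n+1})\cap T^{-1}I(i_{n+2})\cap\cdots$, that is, $T^n z\in I(x|_{n+1}^{n+k})$. This gives
\[
T^n\bigl(I(x|_1^{n+k})\bigr)\subset I(x|_{n+1}^{n+k}).
\]

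Next, $T^n$ is continuous on the closure of $I(x|_1^n)$. Indeed, $T^n$ restricted to this cylinder is a composition of the branches $T|_{I(i_j)}$. Each branch extends continuously to $\overline{I(i_j)}$, since $T$ is defined on $X=\bigcup_i\overline{I(i)}$ and is $C^1$ with bounded distortion there. So we may pass to closures: $T^n$ maps $\overline{I(x|_1^{n+k})}$ into $\overline{I(x|_{n+1}^{n+k})}$. As $x$ lies in the former set, $T^n x\in\overline{I(x|_{n+1}^{n+k})}$ for all $k$.

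Alternatively, if one prefers to avoid the continuity issue, one can use Remark~\ref{remark2-1}: the endpoints form a countable set and may be ignored. For $x$ in the interior of all its cylinders, $T^nx\in I(x|_{n+1}^{n+k})$ holds directly.

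\emph{Step 3: conclude.} Take $k=m$ in Step~2 and use the hypothesis $x|_{n+1}^{n+m}=x|_1^m$. This gives $T^n x\in\overline{I(x|_1^m)}$. Combined with Step~1, both points lie in the same closed interval, whose length equals $|I(x|_1^m)|$. Therefore
\[
|T^n x-x|\le |I(x|_1^m)|.
\]

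The only delicate point is Step~2 at endpoints of cylinders, where a point may have several representations and $T$ may be two-valued at boundary points. I would handle it by the continuous-extension argument, noting that we work with the fixed representation. If needed, I would fall back on the convention in Remark~\ref{remark2-1} that such points form a countable set and do not affect any later dimension estimate.
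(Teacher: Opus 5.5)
Your proposal is correct and uses essentially the paper's argument: $x$ and $T^n x$ both lie in (the closure of) the cylinder $I(x|_1^m)$, so their distance is at most $|I(x|_1^m)|$. The endpoint discussion is unnecessary. Since $x\in E$, by the definition of $E$ the point $x$ lies in the open cylinder $I(x|_1^{k})$ for every $k$, so your direct inclusion $T^n\bigl(I(x|_1^{n+m})\bigr)\subset I(x|_{n+1}^{n+m})=I(x|_1^m)$ already finishes the proof. You should therefore drop the continuous-extension step and the appeal to Remark~\ref{remark2-1}. The former is not accurate for the actual map $T^n$ at an endpoint shared by two branch intervals, where $T$ need not agree with the extended branch. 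The latter would not give the pointwise statement anyway.
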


\begin{proof}
The equality \((x|_1^m)=(x|_{n+1}^{n+m})\) implies that both \(x\) and \(T^n x\)
belong to the same \(m\)-th order cylinder \(I(x|_1^m)\). 
Hence their distance
is bounded from above by \(|I(x|_1^m)|\).
\end{proof}

At a prescribed return time \(n\), the construction repeats a sufficiently
long initial block after the \(n\)-th symbol. When the recurrence condition is
given by
\[
|T^n x-x|<e^{-S_n f(x)},
\]
the length of the repeated block is chosen so that its cylinder has length
smaller than \(e^{-S_n f(x)}\). This gives rise to the pressure function
\(
P\bigl(T,-t(f+\log|T'|)\bigr).
\)
When the recurrence condition is given by
\[
|T^n x-x|<\psi(n),
\]
the repeated block is chosen so that its cylinder has length smaller than
\(\psi(n)\). For a subsequence satisfying
\[
\frac{-\log\psi(n)}{n}\to\log\gamma,
\]
the corresponding pressure function is
\(
P\bigl(T,-t(\log\gamma+\log|T'|)\bigr).
\)

\subsection{Choice of return times and mass distribution}
\label{subsection-sparse-principle}
Let \(\{n_k\}_{k\geq1}\) be the return times. Between consecutive return times, free and auxiliary blocks are inserted
at prescribed positions. 
The lengths of the
free blocks are chosen sufficiently large so that the proportion of symbols
belonging to the auxiliary blocks can be made arbitrarily small.

The return times \(n_k\) are chosen sufficiently sparse. The returning blocks
are treated separately, since their lengths are chosen according to
\(e^{-S_{n_k}f(x)}\) or \(\psi(n_k)\), depending on the theorem under
consideration.

The probability measure is assigned through the choices of the free blocks.
Once the free blocks have been chosen, the returning and auxiliary blocks are
fixed. We then prove an estimate of the form
\[
\mu(I_n(x))\leq C|I_n(x)|^s.
\]
The calculation is given in detail in the proof of
Theorem~\ref{theorem-f-finite}.

\section{Recurrence with \texorpdfstring{$e^{-S_n f}$}{exp(-S_n f)}: Finite Alphabets}
\label{proof-main-theorem-finite}
In this section, we prove Theorem~\ref{theorem-f-finite}. Since
\(R_{\mathrm{Bad}}[T,f,y]\subset R[T,f],
\)
the upper bound follows from the known formula for \(\dim_{\mathrm H}R[T,f]\) in \cite{Yuan2023}.
It remains to show
\[
\dim_{\mathrm H}R_{\mathrm{Bad}}[T,f,y]\geq t(f),
\]
where \(t(f)\) is the unique root of
\(P(T,-t(f+\log|T'|))=0.
\)

Assume throughout this section that \(\Lambda=\{1,\ldots,d\}\), and fix a
symbolic representation \(y=(y_1,y_2,\ldots)\). We construct a Cantor subset
following the framework of Section~\ref{section-cantor}.

\subsection{Choice of parameters and the safe block}
Let \(m_0\) be the integer appearing in the mixing condition of
Definition~\ref{1.1}. Choose 
\(1<a<d^{1/(m_0+1)}
\)
and  \(m^*\in \mathbb{N}\) such that
\[
\big(d^{1/(m_0+1)}\big)^{m^*}>9,
\]
where   $d=\#\Lambda$.
If $t(f)=0$, the required lower bound is immediate.
Therefore, we  assume
$t(f)>0$.
Given $\varepsilon>0$, choose
$\delta>0$ so small that
\(3\delta+\frac{6\delta}{\log\lambda_2}
<\min\{\varepsilon,t(f)\}.
\)
By Proposition~\ref{2.4}, there exists \(N_0\in\mathbb N\) with $N_0>b$
such that, for every \(n\geq N_0\),
\begin{equation}\label{eq-tnf-new}
|t_n(f)-t(f)|\leq \delta .
\end{equation}
Moreover, since \(f\) is continuous and the diameters of cylinders tend to
zero, by increasing \(N_0\) if necessary, we may assume that for every
\(n\geq N_0\), every \(w\in\Sigma^n\), and all \(x,z\in I(w)\),
\begin{equation}\label{eq-S-nf-new}
|S_nf(x)-S_nf(z)|\leq n\delta .
\end{equation}

Choose \(M\) sufficiently large such that
$M-N_0>3(\log_a M+9m_0+m^*+\|f\|)
$ 
and
\begin{equation*}
K_1c_1^2\lambda_1^{\log_a(M+1)+m^*+1+2m_0+b}
\leq\lambda_2^{M\delta/3},
\quad
\frac{\|f\|(\log_a(M+1)+m^*+1+2m_0)}{M}\leq\delta.
\end{equation*}
where \(K_1,\lambda_2\) and \(c_1\) are the constants in
Proposition~\ref{Pro1.4}
and $\|f\|:=\max\{|f(x)|:x\in [0,1]\}$.
Set
\begin{equation}\label{eq-l}
    \ell=\lfloor\log_a(M+1)\rfloor+m^*+1.
\end{equation}
Then
\begin{equation}\label{eq-M-new}
M-\ell-2m_0\geq N_0,\qquad M>2\ell+3m_0,
\end{equation}
and
\begin{equation}\label{eq-M-distortion-new}
\begin{gathered}
K_1c_1^2\lambda_1^{\ell+2m_0+b}\leq\lambda_2^{M\delta/3},\quad
\frac{\|f\|(\ell+2m_0)}{M}\leq\delta.
\end{gathered}
\end{equation}

The choice of \(\ell\) gives enough admissible words of length \(\ell\) to avoid
the finite family \(\mathcal U(M,\ell)\) defined below.
Indeed,
for sufficiently large $M$,
\[
\#\Sigma^\ell
\geq d^{\lfloor\ell/(m_0+1)\rfloor}
\geq d^{-1}d^{\ell/(m_0+1)}
>9(M+1).
\]
The last inequality follows from \eqref{eq-l} and
$\log d/((m_0+1)\log a)>1$.

The return times are chosen successively during the construction.
At every stage, require
\begin{equation}\label{eq-sparse-birkhoff-corrected}
n_k\geq2\sum_{j=1}^{k-1}n_j
\end{equation}
and
\begin{equation}\label{eq-sparse-last-new}
e^{6\delta b}c_1^{3k}\lambda_1^{k(2M+m_0+2b+1)}
\leq\lambda_2^{2\delta n_k/3-b\delta}.
\end{equation}
Choose $n_1\geq\max\{N_0,3M\}$ sufficiently large that these
conditions hold for $k=1$ and
\[
\|f\|(2M+m_0)\leq\delta n_1.
\]
For $k=1$, the sum in
\eqref{eq-sparse-birkhoff-corrected} is interpreted as zero.

We now choose the safe word. 
Define the finite forbidden family
\[
\mathcal U(M,\ell)
=
\left\{
y|_i^{i+\ell-1},
\left((y|_1^{i+\ell-1})^+\right)|_i^{i+\ell-1},
\left((y|_1^{i+\ell-1})^-\right)|_i^{i+\ell-1}
:
1\leq i\leq 3M
\right\},
\]
where $\ell$ is defined in \eqref{eq-l}.
If an adjacent cylinder in the definition of $\mathcal U(M,\ell)$ does not
exist, the corresponding term is omitted.
Since
\[
\#\mathcal U(M,\ell)\leq 9M
\quad\text{and}\quad
\#\Sigma^{\ell}>9(M+1),
\]
we may choose
\[
v\in\Sigma^{\ell}\setminus\mathcal U(M,\ell).
\]
The role of the word $v$ is to
prevent the orbit from entering a  neighbourhood of the target \(y\).

\subsection{Construction of the Cantor set}
Let
\[\bar{\mathtt v}=(\bar{\mathtt u},v,\bar{\mathtt u})
\in\Sigma^{\ell+2m_0},
\]
where \(\bar{\mathtt u}\) is the connecting word defined in \eqref{con}.
If $m_0=0$, 
$(\bar{\mathtt u}^{\infty})|_1^q$ is replaced by
$(v|_1^1)^q$.

The construction has three ingredients. Free blocks of length
\(M-\ell-2m_0
\)
carry the pressure and the measure. Copies of \(\bar{\mathtt v}\) are inserted
as safe blocks to enforce the avoidance of the target.
Finally, at each position \(n_k\), a returning block is appended so that the orbit returns to its
initial point.

Choose \(p_1\in\mathbb N\) and an integer \(q_1\) with \(0\leq q_1<M\) such that 
\(n_1=(p_1+1)M+q_1.
\)
Define
\[
\mathcal F_1
=
\left\{
\left(
(v,\bar{\mathtt u})^{\infty}|_1^{M+q_1-\ell-2m_0},
\bar{\mathtt v},
u^{(1)}_1,\bar{\mathtt v},
u^{(1)}_2,\ldots,
u^{(1)}_{p_1},\bar{\mathtt v}
\right)
:
u^{(1)}_i\in\Sigma^{M-\ell-2m_0},\ 1\leq i\leq p_1
\right\}.
\]
Each free word in $\mathcal F_1$ is followed by
$\bar{\mathtt v}$. 
These words are admissible by Proposition~\ref{Pro1.4}.
When $m_0>0$, every $W_1\in\mathcal F_1$ ends with the
connecting word $\bar{\mathtt u}$; when $m_0=0$, admissible
words can be concatenated directly. Hence $W_1^\infty$ is
admissible in either case. The same argument applies to
$W_k^\infty$, since $W_k$ ends with $\bar{\mathtt v}$.

For each $W_1\in\mathcal F_1$, let $z(W_1)$ be the midpoint of
$I_{n_1}(W_1)$. 
There is a smallest $m_1(W_1)\in\mathbb N$ such that
\[
\left|I_{m_1(W_1)}\bigl((W_1)^\infty|_1^{m_1(W_1)}\bigr)\right|
<
e^{-S_{n_1}f(z(W_1))-n_1\delta}.
\]
By the minimality of $m_1(W_1)$,
\(
e^{-S_{n_1}f(z(W_1))-n_1\delta}
\leq
\big|I_{m_1(W_1)-1}\bigl((W_1)^\infty|_1^{m_1(W_1)-1}\bigr)\big|.
\)
Set
\[
    \eta_1(W_1)=1+\frac{m_1(W_1)}{n_1},
\]
then \(n_1\eta_1(W_1)=n_1+m_1(W_1).\)
Define  the first-level family
\[
\mathcal C_1
=
\left\{
V_1(W_1):=(W_1)^{\eta_1(W_1)}:
W_1\in\mathcal F_1
\right\}.
\]

Now let $k\geq2$ and suppose that $\mathcal C_{k-1}$ has been
constructed. 
It is a finite family of finite words.
Choose one integer
$n_k$ satisfying
\eqref{eq-sparse-birkhoff-corrected}, \eqref{eq-sparse-last-new},
\begin{equation}\label{eq-markov-nk2-new}
\begin{aligned}
n_k\geq k\max_{V\in\mathcal C_{k-1}}|V|,\quad
n_k\geq\max_{V\in\mathcal C_{k-1}}|V|+3M+m_0,
\end{aligned}
\end{equation}
and
\[
\|f\|\left(\max_{V\in\mathcal C_{k-1}}|V|+2M+m_0\right)
\leq\delta n_k.
\]
For each
$V_{k-1}=V_{k-1}(W_{k-1})\in\mathcal C_{k-1}$,
let $j$ be the least positive integer such that
\[
(W_{k-1}^{\infty})|_{|V_{k-1}|+j-\ell+1}^{|V_{k-1}|+j}=v.
\]
Define
\[
v_k(V_{k-1})=
\begin{cases}
\displaystyle
\left(
(W_{k-1}^{\infty})|_{|V_{k-1}|+1}^{|V_{k-1}|+j},
(\bar{\mathtt u}^{\infty})|_1^{M-\ell-2m_0-j}
\right),
& j\leq\ell+m_0,\\[2mm]
\displaystyle
\left(
\bar{\mathtt u},v,
(\bar{\mathtt u}^{\infty})|_1^{M-2\ell-3m_0}
\right),
& j>\ell+m_0.
\end{cases}
\]
In both cases,
\[
|v_k(V_{k-1})|=M-\ell-2m_0.
\]
Once $V_{k-1}$ is fixed, this word is uniquely determined.
For each \(V_{k-1}\in\mathcal C_{k-1}\), choose
\(p_k(V_{k-1})\in\mathbb N\) and
\(0\leq q_k(V_{k-1})<M\) such that
\[
n_k-|V_{k-1}|-m_0
=
\bigl(p_k(V_{k-1})+1\bigr)M+q_k(V_{k-1}).
\]
The second inequality in \eqref{eq-markov-nk2-new} ensures
$p_k(V_{k-1})\geq1$.

Put $p_k:=p_k(V_{k-1})$ and $q_k:=q_k(V_{k-1})$.
Define $\mathcal F_k$ to be the family of all words
\[
\begin{aligned}
(&V_{k-1},
v_k(V_{k-1}),\bar{\mathtt v},
(\bar{\mathtt u}^{\infty})|_1^{q_k},
\bar{\mathtt u},
u^{(k)}_1,\bar{\mathtt v},
u^{(k)}_2,\bar{\mathtt v},
\ldots,
u^{(k)}_{p_k},\bar{\mathtt v}),
\end{aligned}
\]
where
\(V_{k-1}\in\mathcal C_{k-1}\),
\(u_i^{(k)}\in\Sigma^{M-\ell-2m_0}\),
and 
\(1\leq i\leq p_k(V_{k-1}).
\)
For clarity, Figure~\ref{fig-Fk-structure} shows the symbolic structure of a
typical word in \(\mathcal F_k\).
\begin{figure}
    \centering
    \includegraphics[width=0.8\linewidth]{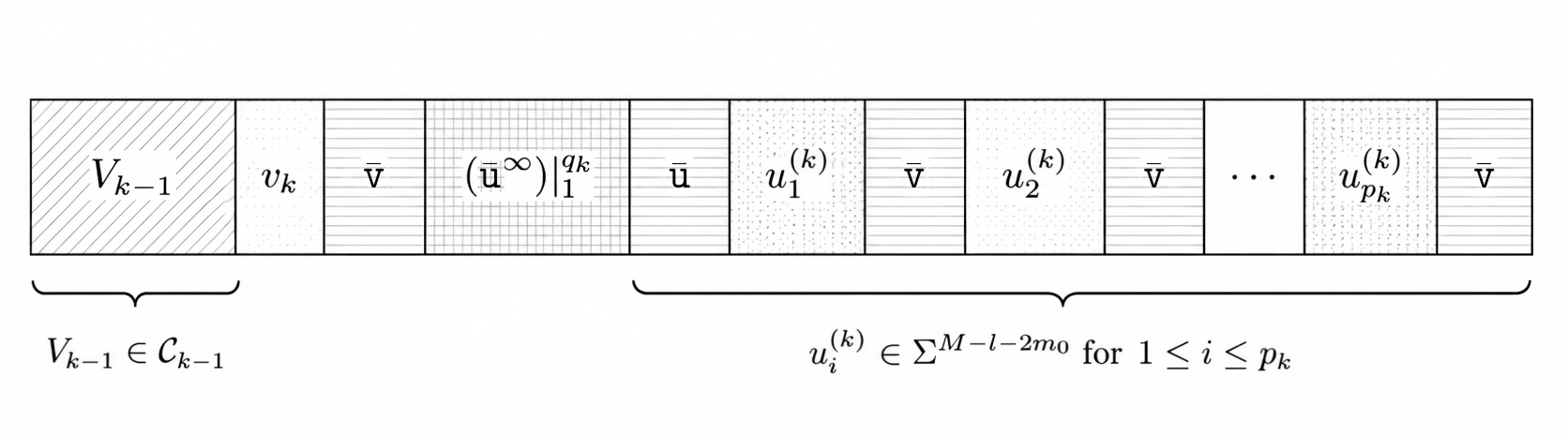}
    \caption{Structure of a word in \(\mathcal F_k\).}
    \label{fig-Fk-structure}
\end{figure}

For each $W_k\in\mathcal F_k$, let $z(W_k)$ be the midpoint of
$I(W_k)$, and write $z=z(W_k)$ in the defining inequality below.

For each \(W_k\in\mathcal F_k\), let
\(m_k(W_k)\) be the smallest positive integer such that
\begin{equation}\label{eq-markov-m-k-11-new}
\left|
I_{m_k(W_k)}
\bigl((W_k)^\infty|_1^{m_k(W_k)}\bigr)
\right|
<
e^{-S_{n_k}f(z)-n_k\delta}.
\end{equation}
The minimality of $m_k(W_k)$ also gives
\begin{equation}\label{eq-markov-m-k-lower-new}
e^{-S_{n_k}f(z(W_k))-n_k\delta}
\leq
\left|
I_{m_k(W_k)-1}\bigl((W_k)^\infty|_1^{m_k(W_k)-1}\bigr)
\right|.
\end{equation}
Set
\(\eta_k(W_k)
=
1+\frac{m_k(W_k)}{n_k}.
\)
Since
\(S_{n_k}f(x)\leq n_k\|f\|\)
and
\(|I_m(x)|\leq\lambda_2^{-m+b},
\)
the minimality of $m_k(W_k)$ gives
\[
m_k(W_k)
\leq
\frac{\|f\|+\delta}{\log\lambda_2}n_k+b+1.
\]
For $k\geq2$, the first condition in \eqref{eq-markov-nk2-new} gives
\[
\sup_{W_{k-1}\in\mathcal F_{k-1}}
\frac{n_{k-1}+m_{k-1}(W_{k-1})}{n_k}\leq\frac1k.
\]
Hence,
\(n_{k-1}+m_{k-1}( W_{k-1})=o(n_k)
\)
 for any $W_{k-1}\in \mathcal{F}_{k-1}$.
Define
\[
\mathcal C_k
=
\left\{
V_k(W_k):=(W_k)^{\eta_k(W_k)}:
W_k\in\mathcal F_k
\right\}.
\]
For $V_k=V_k(W_k)$, write $m_k(V_k):=m_k(W_k)$. The limiting set is
\[
C_\infty
=
\bigcap_{k=1}^\infty
\bigcup_{V_k\in\mathcal C_k}I(V_k).
\]
We remark that
for each \(k\), the integer \(n_k\) does not depend on the choice of
\(W_k\in\mathcal F_k\), whereas \(m_k(W_k)\) may depend on \(W_k\).
For every \(x\in C_\infty\), let \(V_k(x)\in\mathcal C_k\) be the
word such that
\(x\in I(V_k(x)),
\)
and let \(W_k(x)\in\mathcal F_k\) be the word satisfying
\(V_k(x)=V_k(W_k(x)).
\)
Define
\[
m_k(x):=m_k(W_k(x)).
\]
For \(k\geq2\), let \(p_k(x)\) and \(q_k(x)\) be the integers associated
with \(V_{k-1}(x)\) in the definition of \(\mathcal F_k\).
When \(x\) is fixed, we write \(m_k,p_k,q_k\) for
\(m_k(x),p_k(x),q_k(x)\).

We prove by induction that, in $W_k^\infty$, the distance
between the starting positions of any two consecutive
occurrences of $v$ is at most $2M+m_0$.

For $k=1$, the starting positions of $v$ in the words
$\bar{\mathtt v}$ appearing in $W_1$ are separated by $M$.
Across two consecutive repetitions of $W_1$, the
corresponding distance is $M+q_1<2M$.
Any additional occurrences of $v$ can only decrease
the maximum distance between consecutive starting positions.
Suppose the assertion holds for $W_{k-1}^\infty$.
Let $j$ be as in the definition of $v_k(V_{k-1})$.
If $j\leq\ell+m_0$, the distance between the starting
positions of the occurrence of $v$ completed along
$W_{k-1}^\infty$ and the occurrence of $v$ in the
$\bar{\mathtt v}$ following $v_k(V_{k-1})$ is
\(M-m_0-j<M.
\)
If $j>\ell+m_0$, the distance between the occurrence
of $v$ inserted in $v_k(V_{k-1})$ and the occurrence
of $v$ in the following $\bar{\mathtt v}$ is
\(M-\ell-2m_0<M.
\)
The distance between the occurrences of $v$ in the
$\bar{\mathtt v}$ following $v_k(V_{k-1})$ and
the $\bar{\mathtt v}$ following $u_1^{(k)}$ is
\(M+q_k+m_0<2M+m_0.
\)
The occurrences of $v$ following consecutive free
words $u_i^{(k)}$ are separated by $M$.

Finally, since $W_k$ begins with $W_1$ and ends with
$\bar{\mathtt v}$, the distance from the occurrence
of $v$ in the last $\bar{\mathtt v}$ of $W_k$ to
the occurrence of $v$ in the first $\bar{\mathtt v}$
of the next repetition of $W_k$ is $M+q_1<2M$.
This completes the induction.

Every finite prefix of a constructed infinite sequence is
contained in some $W_k$. The initial part contains $v$ at
position one. Consequently, every block of length
$2M+\ell+m_0$ in such a sequence contains a complete occurrence
of $v$.

\begin{lemma}\label{lemma-subset-new}
We have
\(C_\infty\subset R_{\mathrm{Bad}}[T,f,y].
\)
\end{lemma}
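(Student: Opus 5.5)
Fix \(x\in C_\infty\). We verify the two memberships separately: first \(x\in R[T,f]\), then \(x\in\mathrm{Bad}(T,y)\).

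\textbf{Recurrence.} For each \(k\), \(x\in I(V_k(x))\) with \(V_k=(W_k)^{\eta_k}\), and \(|W_k|=n_k\). Hence the first \(n_k+m_k\) symbols of \(x\) are the prefix of \(W_k^\infty\) of that length. So \(x|_{n_k+1}^{n_k+m_k}=W_k^\infty|_1^{m_k}=x|_1^{m_k}\), where the last equality needs \(m_k\le n_k\) (or otherwise uses periodicity of \(W_k^\infty\)). Lemma~\ref{lemma-returning-block} then gives
\[
|T^{n_k}x-x|\le |I(x|_1^{m_k})|<e^{-S_{n_k}f(z(W_k))-n_k\delta}
\]
by \eqref{eq-markov-m-k-11-new}. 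Since \(x\) and \(z(W_k)\) both lie in \(I(W_k)\), an \(n_k\)-th order cylinder with \(n_k\ge N_0\), estimate \eqref{eq-S-nf-new} gives \(S_{n_k}f(z(W_k))\ge S_{n_k}f(x)-n_k\delta\). Therefore \(|T^{n_k}x-x|<e^{-S_{n_k}f(x)}\) for every \(k\), and since \(n_k\to\infty\), \(x\in R[T,f]\). The only care needed is that \(z(W_k)\) is the midpoint of \(I(W_k)\) and that \(x\in I(W_k)\) (up to endpoints, see Remark~\ref{remark2-1}).

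\textbf{Avoidance.} Let \(\rho>0\) be smaller than the minimal length of the cylinders \(I(y|_1^{3M+\ell})\), \(I((y|_1^{3M+\ell})^{\pm})\); there are finitely many such cylinders, so \(\rho\) is positive. Suppose \(|T^nx-y|<\rho\) with \(n\) large. Then \(T^nx\) lies in \(I(y|_1^{3M+\ell})\) or in one of its two neighbours of the same order. The boundary case where \(T^nx\) is an endpoint is excluded by the fixed choice of representation and is harmless for dimension. Consequently, for every \(1\le i\le 3M\), the word \(x|_{n+i}^{n+i+\ell-1}\) coincides with \(y|_i^{i+\ell-1}\) or with the corresponding shifted piece of \((y|_1^{i+\ell-1})^{\pm}\). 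Here we use that the neighbour of a longer cylinder truncates to the neighbour, or to the cylinder itself, at shorter order, which follows from the Markov structure and the nesting of cylinders. Hence this word lies in \(\mathcal U(M,\ell)\). On the other hand, the induction preceding the lemma shows that every window of length \(2M+\ell+m_0\le 3M+\ell-1\) contains a full occurrence of \(v\), by \eqref{eq-M-new}. So some \(i\le 2M+m_0+1\le 3M\) satisfies \(x|_{n+i}^{n+i+\ell-1}=v\notin\mathcal U(M,\ell)\), a contradiction. Thus \(\liminf_n|T^nx-y|\ge\rho>0\).

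\textbf{Main obstacle.} The hardest step is this neighbour bookkeeping: being close to \(y\) at scale \(\rho\) forces the symbolic window to agree with \(y\) or with an adjacent cylinder's word, and this must hold for all windows up to position \(3M\) simultaneously. I would argue it by choosing \(\rho\) below the lengths of the order-\((3M+\ell)\) cylinders adjacent to \(I(y|_1^{3M+\ell})\). Then any point within \(\rho\) of \(y\) lies in the union of at most three consecutive cylinders of that order, and projecting each of them to positions \(i,\dots,i+\ell-1\) gives exactly the elements listed in \(\mathcal U(M,\ell)\).
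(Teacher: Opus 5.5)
Your recurrence argument and the overall plan are the paper's. You use Lemma~\ref{lemma-returning-block}, \eqref{eq-markov-m-k-11-new} and \eqref{eq-S-nf-new} at the times $n_k$, and you get avoidance from an occurrence of $v\notin\mathcal U(M,\ell)$ in every window of length $2M+\ell+m_0$. Two side remarks on the recurrence half. The caveat about $m_k\le n_k$ is unnecessary, since both $x|_1^{m_k}$ and $x|_{n_k+1}^{n_k+m_k}$ are read off $W_k^\infty$, whose period is $|W_k|=n_k$. Nor do endpoints need the appeal to ``harmless for dimension'', which would not be legitimate for a set inclusion. Since $x\in I(V_k)$ for every $k$, $T^nx$ lies in the open cylinder $I(x|_{n+1}^{n+N})$ and is never an endpoint.

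The step that fails as stated is your choice of $\rho$. Take $N=3M+\ell$. Choosing $\rho$ below the lengths of $I(y|_1^N)$ and of its \emph{existing} neighbours does not force a point within $\rho$ of $y$ into one of these cylinders, because a neighbour need not exist. The paper explicitly allows gaps (cookie-cutter systems). Take linear full branches on $I(1)=(0,0.45)$ and $I(2)=(0.46,1)$. Then $I(w1)$ and $I(w2)$ are separated by a gap of length $0.01|I(w)|$, while $|I(w1)|=0.45|I(w)|$. If $y|_1^N=(2,1,\ldots,1)$, this cylinder has no neighbour on either side. If also $y$ lies near its right end, your $\rho$ admits points of $I(2,1,\ldots,1,2)$, which is neither $I(y|_1^N)$ nor adjacent to it. In the countable-alphabet reuse in Section~\ref{proof-f-countable}, where $y$ is a point of the full system, a missing neighbour can even be an accumulation point of order-$N$ cylinders.

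The repair is simpler than the neighbour bookkeeping you flagged as the main obstacle. Since $y\in E$ lies in the open interval $I(y|_1^N)$, set $\rho=\operatorname{dist}\bigl(y,[0,1]\setminus I(y|_1^N)\bigr)>0$. Then $|T^nx-y|<\rho$ forces $T^nx\in I(y|_1^N)$, so $x|_{n+1}^{n+N}=y|_1^N$. Hence $x|_{n+i}^{n+i+\ell-1}=y|_i^{i+\ell-1}\in\mathcal U(M,\ell)$ for all $1\le i\le 3M$, contradicting the occurrence of $v$. This is the device the paper uses for $G_\infty$ in Section~\ref{proof-theorem-re-sh-f}. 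The adjacent-cylinder entries of $\mathcal U(M,\ell)$ are not needed for this inclusion.
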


\begin{proof}
Let $x\in C_\infty$. 
We first prove that $x\in\mathrm{Bad}(T,y)$.
By the construction above, for every
sufficiently large $n$, the word
\(x|_{n+1}^{n+2M+\ell+m_0}
\)
contains a complete occurrence of $v$. Consequently, there
exists an integer
\(1\leq j\leq2M+m_0+1\leq3M
\)
such that
\[
x|_{n+j}^{n+j+\ell-1}=v.
\]
We now use $v\notin\mathcal U(M,\ell)$ to obtain a uniform lower bound for the
distance to $y$.
If \(T^n x\) were sufficiently close to \(y\), then, by the definition
of \(\mathcal U(M,\ell)\), for every \(1\leq i\leq 2M+m_0+1\),
\(x|_{n+i}^{n+i+\ell-1}\in\mathcal U(M,\ell).
\)
This is because the
order \(i+\ell-1\) cylinder containing \(T^n x\) would have to coincide with the
cylinder \(I(y|_1^{i+\ell-1})\) or one of its adjacent cylinders. 
Hence, each word
\((x|_{n+i}^{n+i+\ell-1})
\)
would belong to \(\mathcal U(M,\ell)\). 
This contradicts the occurrence of
\(v\notin\mathcal U(M,\ell)\).
Therefore, there exists \(a>0\) such that
\[
|T^n x-y|\geq a
\qquad\text{for all sufficiently large } n,
\]
and hence \(x\in\mathrm{Bad}(T,y)\).

It remains to verify the recurrence condition. At the prescribed return time
\(n_k\), the block following position \(n_k\) repeats the initial block of
length \(m_k\). Hence, by Lemma~\ref{lemma-returning-block} and
 \eqref{eq-markov-m-k-11-new}
\[
|T^{n_k}x-x|
<
e^{-S_{n_k}f(z)-n_k\delta},
\]
where \(z\) is the midpoint of the corresponding \(n_k\)-th cylinder. 
By \eqref{eq-S-nf-new},
\(S_{n_k}f(x)\leq S_{n_k}f(z)+n_k\delta.
\)
Thus for every \(k\),
\[
|T^{n_k}x-x|
<
e^{-S_{n_k}f(x)}.
\]
 Hence \(x\in R[T,f]\). Consequently,
\(x\in R[T,f]\cap\mathrm{Bad}(T,y)=R_{\mathrm{Bad}}[T,f,y].
\)
\end{proof}

\subsection{Mass distribution}
Now,
we  construct a probability measure supported on \( C_{\infty}\).
Let
\(r=M-\ell-2m_0.
\)
Only the free blocks carry independent choices. The safe blocks, connecting
blocks and returning blocks are prescribed once the free blocks have been
chosen.
For each \(u\in\Sigma^r\), let \(z_u\) be the midpoint of \(I(u)\).
By the
definition of \(t_r(f)\),
\[
\sum_{u\in\Sigma^r} e^{-t_r(f)S_rh(z_u)}=1.
\]
Hence, 
\(p(u):=e^{-t_r(f)S_rh(z_u)}
\)
defines a probability vector on \(\Sigma^r\).

For a word \( V_k\in\mathcal C_k\), let
\(u_i^{(j)}\in\Sigma^r,
\)
\((1\leq j\leq k\),  \(1\leq i\leq p_j)\),
be the free blocks appearing in \( V_k\). Define
\[
\mu\big(I_{n_k+m_k}( V_k)\big)
=
\prod_{j=1}^{k}\prod_{i=1}^{p_j}p(u_i^{(j)})
=
\prod_{j=1}^{k}\prod_{i=1}^{p_j}
e^{-t_r(f)S_rh(z_i^{(j)})},
\]
where \(z_i^{(j)}\) is the midpoint of \(I(u_i^{(j)})\).

Since the weights $p(u)$ sum to one at each free choice,
these assignments are consistent on the nested construction.
Equivalently, the mass of a child cylinder is obtained from
the mass of its parent by multiplying the weights of the
new free blocks.
Therefore, $\mu$ extends to a Borel probability measure.
Since $\max\{p(u): u\in\Sigma^r\}<1$ and each stage introduces
new free choices, $\mu$ is non-atomic, and hence
$\mu(C_\infty)=1$.

\subsection{Cylinder measure estimates and the dimension lower bound}
It remains to prove the following property: there exists \(C>0\) such
that every sufficiently small cylinder \(I_n(x)\) intersecting
\( C_{\infty}\) satisfies
\begin{equation}\label{eq-goal-new}
\mu(I_n(x))
\leq
C |I_n(x)|^{t(f)-3\delta-\frac{6\delta}{\log\lambda_2}}.
\end{equation}
Once this estimate is proved, Proposition~\ref{pro-mass} gives the desired
dimension lower bound.

We first estimate the measure of the cylinders $I_{n_k+m_k(x)}(x)$ where $x\in C_{\infty}$.
\begin{claim}\label{claim-basic-level}
For every $k\geq1$ and every $x\in C_\infty$,
\begin{equation}\label{case1-estimate-new}
\mu(I_{n_k+m_k}(x))
\leq
|I_{n_k+m_k}(x)|^{t(f)-2\delta-
\frac{6\delta}{\log\lambda_2}}.
\end{equation}
\end{claim}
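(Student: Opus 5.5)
We compare the measure, which is a product over free blocks, with the cylinder length, which is a product over all blocks by \eqref{eq-two-cylinder}. The goal is to show that the non-free material costs only a factor $|I|^{O(\delta)}$ in the exponent.

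\emph{Step 1: lower bound on the cylinder length.} Decompose $V_k(x)$ along its structure. It consists of $V_{k-1}(x)$, the word $v_k(V_{k-1})$, the filler $(\bar{\mathtt u}^\infty)|_1^{q_k}$ together with copies of $\bar{\mathtt u}$ and $\bar{\mathtt v}$, the free blocks $u^{(k)}_1,\dots,u^{(k)}_{p_k}$, and the returning block of length $m_k$. Apply the right-hand inequality of \eqref{eq-two-cylinder} repeatedly, across the free blocks and the auxiliary block attached to each. This gives
\[
|I_{n_k+m_k}(x)|\leq c_1^{(\#\text{pieces})}\,|I(V_{k-1})|\prod_{i}|I(u^{(k)}_i)|\cdot|I(\text{aux})|\cdot|I_{m_k}(x)|.
\]
Each group ``free block $+$ $\bar{\mathtt v}$'' of length $M$ carries a multiplicative error $c_1\lambda_1^{\ell+2m_0+b}$ coming from \eqref{6}. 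By \eqref{eq-M-distortion-new}, this error is at most $\lambda_2^{M\delta/3}$, which is $\leq |I(u)|^{-O(\delta)}$ since $|I(u)|\leq\lambda_2^{-r+b}$.

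\emph{Step 2: the returning block.} By \eqref{eq-markov-m-k-11-new} and \eqref{eq-markov-m-k-lower-new}, $|I_{m_k}(x)|$ is comparable, up to a factor $\lambda_1^{b+1}$, to $e^{-S_{n_k}f(z)-n_k\delta}$. By \eqref{eq-S-nf-new}, this is $e^{-S_{n_k}f(x)}$ up to $e^{\pm 2n_k\delta}$. Now split $S_{n_k}f(x)$ over the blocks. The part over $V_{k-1}$ and over the auxiliary blocks is at most $\|f\|\bigl(|V_{k-1}|+2M+m_0\bigr)+\|f\|(\ell+2m_0)p_k\leq 2\delta n_k$, by the choice of $n_k$ and \eqref{eq-M-distortion-new}. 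Hence
\[
|I_{n_k+m_k}(x)|\lesssim |I(V_{k-1})|\prod_i\Bigl(|I(u_i^{(k)})|e^{-S_rf(z_i^{(k)})}\Bigr)e^{O(\delta n_k)}.
\]
Since $|I(u)|\asymp e^{-S_r\log|T'|(z_u)}$ by Proposition~\ref{Pro1.4}(5), each free factor is $\asymp e^{-S_rh(z_u)}$. This identifies the quantity whose $t_r(f)$-th power is $p(u)$.

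\emph{Step 3: comparing measure and length.} Raise the bound to the power $t(f)-2\delta-6\delta/\log\lambda_2$. Use $|t_r(f)-t(f)|\leq\delta$ from \eqref{eq-tnf-new}, together with $r\geq N_0$, to get $p(u)\leq e^{-(t(f)-\delta)S_rh(z_u)}$. The free blocks then contribute exactly $\mu$ divided by the level-$(k-1)$ measure. For the $(k-1)$st level, proceed by induction on $k$, or better, treat all levels at once through the product over $j\leq k$. The accumulated errors are $c_1^{3k}\lambda_1^{k(2M+m_0+2b+1)}e^{O(\delta n_k)}$. The term $e^{O(\delta n_k)}$ is paid for by the slack $\delta/\log\lambda_2$ in the exponent, because $|I_{n_k+m_k}|\leq\lambda_2^{-n_k+b}$ and so $e^{\delta n_k}\leq |I|^{-\delta/\log\lambda_2}\cdot$const. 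The $k$-dependent constants are absorbed by the sparseness condition \eqref{eq-sparse-last-new}, and \eqref{eq-sparse-birkhoff-corrected} controls the earlier levels, whose total length is at most $n_k/2$.

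\emph{Main obstacle.} The main difficulty is the bookkeeping of the returning blocks of earlier levels $j<k$. Their length is not small, yet they carry no measure. They must be converted into the factor $e^{-S_{n_j}f}$, which itself splits over the free blocks of levels $\leq j$. Consequently earlier free blocks get counted with $f$ more than once, and this only shrinks $|I|$, which is the favourable direction. I would handle this by showing inductively that
\[
|I(V_j)|\leq e^{O(\delta n_j)}\prod_{i\leq j}\prod_{u}e^{-S_rh(z_u)},
\]
using only the inequality direction (extra $f$-terms help). I would then verify that the exponent $t(f)-2\delta-6\delta/\log\lambda_2$ leaves enough room once $t(f)-\delta>0$ is used.
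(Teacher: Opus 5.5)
There is a genuine gap: every estimate you derive for the cylinder length goes in the wrong direction. The measure side gives $\mu(I_{n_k+m_k}(x))\le\prod_{j\le k}\prod_{i\le p_j}e^{-(t(f)-\delta)S_rh(z_i^{(j)})}$, so the claim needs a \emph{lower} bound
\[
|I_{n_k+m_k}(x)|\ \geq\ C_k^{-1}e^{-O(\delta n_k)}\prod_{j\le k}\prod_{i\le p_j}e^{-S_rh(z_i^{(j)})}.
\]
Despite its title, your Step~1 applies the right-hand inequality of \eqref{eq-two-cylinder}. Step~2 concludes $|I_{n_k+m_k}(x)|\lesssim\cdots$, and your inductive statement $|I(V_j)|\le e^{O(\delta n_j)}\prod\prod e^{-S_rh(z_u)}$ is again an upper bound. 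Combined with an upper bound on $\mu$, an upper bound on $|I|$ yields nothing. You need three lower-bound ingredients instead: the left-hand inequality of \eqref{eq-two-cylinder}, the bound $|I(w)|\ge\lambda_1^{-|w|-b}$ for the safe, connecting and filler words, and the minimality bound \eqref{eq-markov-m-k-lower-new} for the returning block.

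The same reversal breaks your resolution of the ``main obstacle''. $S_{n_j}f$ also counts the $f$-mass of $V_{j-1}$ (earlier free and returning blocks) and of the auxiliary words. This makes $|I|$ \emph{smaller} relative to the free-block product that controls $\mu$. For $\mu\le|I|^s$ this is the unfavourable direction, so it cannot be dropped by ``using only the inequality direction''. It must be shown to be negligible:
\[
S_{n_j}f(z_j')\le\sum_{i=1}^{p_j}S_rf(z_i^{(j)})+3\delta n_j,
\]
where $z_j'$ is the midpoint of $I(W_j)$. This is exactly where the conditions $\|f\|(\max_{V\in\mathcal C_{j-1}}|V|+2M+m_0)\le\delta n_j$ and $\|f\|(\ell+2m_0)\le\delta M$ are indispensable, together with \eqref{eq-S-nf-new}. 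Without sparse return times, the required lower bound on $|I|$ is false. Your Step~2 actually contains this bound, but your plan then discards it.

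With the directions corrected, you recover the paper's proof. For $j\ge2$ one shows
\[
|I_{n_{j-1}+m_{j-1}}(x)|\,e^{-S_{n_j}f(z_j')}\prod_{i=1}^{p_j}|(T^r)'(z_i^{(j)})|^{-1}\le c_1^3\lambda_1^{2M+m_0+2b+1}\bigl(K_1c_1^2\lambda_1^{\ell+2m_0+b}\bigr)^{p_j}e^{\delta n_j}|I_{n_j+m_j}(x)|,
\]
with an analogous bound for $j=1$. One then telescopes over $j\le k$ and raises to the power $t(f)-\delta\in(0,1)$.

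Work with $z_j'$ directly rather than passing through $S_{n_j}f(x)$. That detour costs an extra $\delta n_j$ per level, and the stated exponent $t(f)-2\delta-6\delta/\log\lambda_2$ only leaves room for a total error of $e^{4\delta\sum_j n_j}\le e^{6\delta n_k}$.
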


\begin{proof}
Fix $x\in C_\infty$. For the remainder of the proof, write
$V_j=V_j(x)$, $W_j=W_j(x)$ and $m_j=m_j(x)$. For $j\geq2$,
also write $p_j=p_j(x)$ and $q_j=q_j(x)$, while $p_1$ is the
fixed integer in the definition of $\mathcal F_1$.

Let $c_0=\min\{f(x): x\in [0,1]\}$.
Since $r=M-\ell-2m_0\geq N_0>b$, Proposition~\ref{Pro1.4}
gives
\[
S_rh(z)
\geq
rc_0+(r-b)\log\lambda_2>0
\]
for every $z$ in an $r$-th order cylinder. Also,
\eqref{eq-tnf-new} gives
\(t_r(f)\geq t(f)-\delta.
\)
Moreover, Proposition~\ref{Pro1.4} and the disjointness of
cylinders of the same order imply
\[
\sum_{u\in\Sigma^n}e^{-S_nh(z_u)}
\leq
K_1e^{-nc_0}
\sum_{u\in\Sigma^n}|I(u)|
\leq
K_1e^{-nc_0}.
\]
Hence
\(P(T,-h)<0,
\)
and therefore $t(f)<1$. By the choice of $\delta$, we have
\[
0<t(f)-3\delta<1.
\]

By the definition of $\mu$,
\[
\mu(I_{n_k+m_k}(x))
\leq
\prod_{j=1}^{k}\prod_{i=1}^{p_j}
\left(
e^{-S_rf(z_i^{(j)})}
\frac{1}{|(T^r)'(z_i^{(j)})|}
\right)^{t(f)-\delta}.
\]
Let $z_j'$ be the midpoint of $I(W_j)$. 
Using \eqref{eq-S-nf-new}, $p_jM\leq n_j$, and the choices of
$M$ and $n_j$, we obtain, for every $j\geq1$,
\[
\left|
S_{n_j}f(z_j')
-
\sum_{i=1}^{p_j}S_rf(z_i^{(j)})
\right|
\leq
3\delta n_j.
\]
Consequently,
\begin{equation}\label{eq-measure-after-birkhoff-new}
\mu(I_{n_k+m_k}(x))
\leq
e^{3\delta\sum_{j=1}^{k}n_j}
\prod_{j=1}^{k}
\left(
e^{-S_{n_j}f(z_j')}
\prod_{i=1}^{p_j}
\frac{1}{|(T^r)'(z_i^{(j)})|}
\right)^{t(f)-\delta}.
\end{equation}

Repeated use of \eqref{eq-two-cylinder}, together with
Proposition~\ref{Pro1.4}, gives
\[
\prod_{i=1}^{p_1}
\frac{1}{|(T^r)'(z_i^{(1)})|}
\leq
c_1\lambda_1^{2M+m_0+b}
\left(
K_1c_1^2\lambda_1^{\ell+2m_0+b}
\right)^{p_1}
|I_{n_1}(x)|
\]
and, for every $j\geq2$,
\[
\begin{aligned}
|I_{n_{j-1}+m_{j-1}}(x)|
\prod_{i=1}^{p_j}
\frac{1}{|(T^r)'(z_i^{(j)})|}
\leq
c_1\lambda_1^{2M+m_0+b}
\left(
K_1c_1^2\lambda_1^{\ell+2m_0+b}
\right)^{p_j}
|I_{n_j}(x)|.
\end{aligned}
\]

On the other hand, by the minimality of $m_j$,
\eqref{eq-markov-m-k-lower-new}, and
Proposition~\ref{Pro1.4},
\[
\begin{aligned}
e^{-S_{n_j}f(z_j')}
\leq
e^{\delta n_j}
\left|
I_{m_j-1}
\bigl((W_j)^\infty|_1^{m_j-1}\bigr)
\right|
\leq
c_1\lambda_1^{b+1}e^{\delta n_j}
\left|
I_{m_j}
\bigl((W_j)^\infty|_1^{m_j}\bigr)
\right|.
\end{aligned}
\]
By \eqref{eq-two-cylinder}, 
\[
|I_{n_j}(x)|e^{-S_{n_j}f(z_j')}
\leq
c_1^2\lambda_1^{b+1}e^{\delta n_j}
|I_{n_j+m_j}(x)|.
\]

Combining the preceding estimates, we obtain
\[
\begin{aligned}
e^{-S_{n_1}f(z_1')}
\prod_{i=1}^{p_1}
\frac{1}{|(T^r)'(z_i^{(1)})|}
\leq
c_1^3\lambda_1^{2M+m_0+2b+1}
\left(
K_1c_1^2\lambda_1^{\ell+2m_0+b}
\right)^{p_1}
e^{\delta n_1}|I_{n_1+m_1}(x)|
\end{aligned}
\]
and, for every $j\geq2$,
\[
\begin{aligned}
&|I_{n_{j-1}+m_{j-1}}(x)|
\left(
e^{-S_{n_j}f(z_j')}
\prod_{i=1}^{p_j}
\frac{1}{|(T^r)'(z_i^{(j)})|}
\right)
\\
&\quad\leq
c_1^3\lambda_1^{2M+m_0+2b+1}
\left(
K_1c_1^2\lambda_1^{\ell+2m_0+b}
\right)^{p_j}
e^{\delta n_j}|I_{n_j+m_j}(x)|.
\end{aligned}
\]

Iterating these inequalities from $j=1$ to $j=k$, we have
\[
\begin{aligned}
&\prod_{j=1}^{k}
\left(
e^{-S_{n_j}f(z_j')}
\prod_{i=1}^{p_j}
\frac{1}{|(T^r)'(z_i^{(j)})|}
\right)
\\
&\quad\leq
c_1^{3k}\lambda_1^{k(2M+m_0+2b+1)}
\left(
K_1c_1^2\lambda_1^{\ell+2m_0+b}
\right)^{\sum_{j=1}^{k}p_j}
e^{\delta\sum_{j=1}^{k}n_j}
|I_{n_k+m_k}(x)|.
\end{aligned}
\]

Therefore,
\begin{equation}\label{eq-product-cylinder-new}
\begin{aligned}
\prod_{j=1}^{k}
\left(
e^{-S_{n_j}f(z_j')}
\prod_{i=1}^{p_j}
\frac{1}{|(T^r)'(z_i^{(j)})|}
\right)^{t(f)-\delta}
\leq
C_k e^{\delta\sum_{j=1}^{k}n_j}
|I_{n_k+m_k}(x)|^{t(f)-\delta},
\end{aligned}
\end{equation}
where
\[
C_k
=
c_1^{3k}\lambda_1^{k(2M+m_0+2b+1)}
\left(
K_1c_1^2\lambda_1^{\ell+2m_0+b}
\right)^{\sum_{j=1}^{k}p_j}.
\]
Here we used
\[
0<t(f)-\delta<1
\quad\text{and}\quad
C_k\geq1.
\]

By \eqref{eq-sparse-birkhoff-corrected},
\(\sum_{j=1}^{k}n_j\leq\frac32n_k.
\)
Combining \eqref{eq-measure-after-birkhoff-new} and
\eqref{eq-product-cylinder-new}, we obtain
\[
\begin{aligned}
\mu(I_{n_k+m_k}(x))
&\leq
C_k e^{4\delta\sum_{j=1}^{k}n_j}
|I_{n_k+m_k}(x)|^{t(f)-\delta}
\\
&\leq
C_k e^{6\delta n_k}
|I_{n_k+m_k}(x)|^{t(f)-\delta}.
\end{aligned}
\]

Moreover, Proposition~\ref{Pro1.4} gives
\[
|I_{n_k+m_k}(x)|
\leq
\lambda_2^{-(n_k+m_k)+b}.
\]
Hence,
\begin{equation}\label{eq-exp-error-new}
e^{6\delta n_k}
\leq
e^{6\delta b}
|I_{n_k+m_k}(x)|^{-\frac{6\delta}{\log\lambda_2}}.
\end{equation}
It follows that
\[
\mu(I_{n_k+m_k}(x))
\leq
C_ke^{6\delta b}
|I_{n_k+m_k}(x)|^{t(f)-\delta-
\frac{6\delta}{\log\lambda_2}}.
\]

Finally,
\[
n_k
=
M\sum_{j=1}^{k}p_j
+kM
+\sum_{j=1}^{k}q_j
+(k-1)m_0
+\sum_{j=1}^{k-1}m_j,
\]
and therefore
\(M\sum_{j=1}^{k}p_j\leq n_k.
\)
By \eqref{eq-M-distortion-new},
 and
\eqref{eq-sparse-last-new},
\[
\begin{aligned}
C_ke^{6\delta b}
&\leq
c_1^{3k}
\lambda_1^{k(2M+m_0+2b+1)}
e^{6\delta b}
\lambda_2^{\delta n_k/3}
\\
&\leq
\lambda_2^{\delta(n_k-b)}
\\
&\leq
|I_{n_k+m_k}(x)|^{-\delta}.
\end{aligned}
\]
Substituting this into the preceding estimate proves
\eqref{case1-estimate-new}.
\end{proof}

Now, we  pass from the  levels \(n_k+m_k\) to arbitrary  levels.
\begin{claim}\label{claim-intermediate-levels}
For every sufficiently large \(n\) and every cylinder \(I_n(x)\) intersecting
\( C_{\infty}\), the estimate \eqref{eq-goal-new} holds.
\end{claim}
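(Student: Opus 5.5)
Fix \(x\in C_\infty\) and a large \(n\). Choose \(k\) with \(n_{k}+m_{k}\le n<n_{k+1}+m_{k+1}\) and split into three cases according to where \(n\) falls in the level-\((k+1)\) word:
\begin{enumerate}[(a)]
\item \(n_k+m_k\le n\le n_k+m_k+(\text{length of } v_{k+1},\bar{\mathtt v},(\bar{\mathtt u}^\infty)|_1^{q_{k+1}},\bar{\mathtt u})\), i.e.\ \(n\) lies in the deterministic connecting part of length at most \(3M+m_0\);
\item \(n\) lies inside the free part, so \(n=n_k+m_k+O(M)+iM+s\) with \(0\le i<p_{k+1}\) and \(0\le s<M\);
\item \(n_{k+1}\le n\le n_{k+1}+m_{k+1}\), inside the returning block.
\end{enumerate}
In every case I compare \(\mu(I_n(x))\) with the measure of a cylinder whose mass is explicitly a product of weights \(p(u)\). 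I then compare \(|I_n(x)|\) with the corresponding cylinder lengths using \eqref{eq-two-cylinder} and \eqref{6}.

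In case (a) no new free choices occur, so \(\mu(I_n(x))=\mu(I_{n_k+m_k}(x))\). Meanwhile \(|I_n(x)|\ge c_1^{-1}\lambda_1^{-(3M+m_0+b)}|I_{n_k+m_k}(x)|\). Claim~\ref{claim-basic-level} then gives \eqref{eq-goal-new} with an extra constant factor, which is absorbed into \(C\). Since \(t(f)-3\delta<1\), the exponent change only helps.

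In case (b), the cylinder \(I_n(x)\) is contained in the union over the at most one partially specified free block. So \(\mu(I_n(x))\le \mu(I_{n_k+m_k}(x))\prod_{i'\le i}p(u^{(k+1)}_{i'})\); a partially determined block contributes at most \(1\). I redo the chain of estimates from the proof of Claim~\ref{claim-basic-level} for the partial product. Each completed free block \(u\) together with its \(\bar{\mathtt v}\) satisfies
\[
p(u)\le \bigl(e^{-S_rf}|I(u)|K_1\bigr)^{t(f)-\delta}\le\bigl(K_1c_1^2\lambda_1^{\ell+2m_0+b}\bigr)|I(u,\bar{\mathtt v})|^{t(f)-\delta}e^{-(t(f)-\delta)S_rf}.
\]
The \(e^{-S_rf}\) factors are at most \(1\), so I drop them. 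The distortion constant per block is at most \(\lambda_2^{M\delta/3}\) by \eqref{eq-M-distortion-new}, and this is bounded by \(|I(u,\bar{\mathtt v})|^{-\delta/3}\) up to constants. The last incomplete block costs at most \(\lambda_1^{M+b}\), a fixed constant. Multiplying by the estimate of Claim~\ref{claim-basic-level} and using \eqref{eq-two-cylinder}, with the factor \(c_1\) per concatenation again absorbed into the \(\lambda_2^{M\delta/3}\) budget, gives \eqref{eq-goal-new}.

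Case (c) is the main obstacle. Here \(\mu(I_n(x))=\mu(I_{n_{k+1}}(x))\), but \(|I_n(x)|\) can be as small as \(|I_{n_{k+1}+m_{k+1}}(x)|\), which is smaller by the huge factor \(e^{-S_{n_{k+1}}f}\). I plan to exploit \(t(f)-3\delta<1\) together with the fact that the mass already carries the factor \(e^{-(t(f)-\delta)S_{n_{k+1}}f}\) up to \(e^{3\delta n_{k+1}}\), as in \eqref{eq-measure-after-birkhoff-new}. Write \(|I_n(x)|\approx|I_{n_{k+1}}(x)|\,|I_{n-n_{k+1}}(x)|\), with \(|I_{n-n_{k+1}}(x)|\ge e^{-S_{n_{k+1}}f(z)-n_{k+1}\delta}\) up to constants by minimality of \(m_{k+1}\). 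Then
\[
\mu(I_n(x))\lesssim e^{O(\delta n_{k+1})}|I_{n_{k+1}}(x)|^{t-\delta}e^{-(t-\delta)S_{n_{k+1}}f}\le e^{O(\delta n_{k+1})}|I_{n_{k+1}}(x)|^{t-\delta}|I_{n-n_{k+1}}(x)|^{t-\delta},
\]
because \(e^{-S f}\le\) that cylinder length times \(e^{\delta n}\). Interpolating monotonically between the two endpoint estimates handles intermediate \(n\). Finally, the \(e^{O(\delta n_{k+1})}\) error is converted into \(|I_n(x)|^{-O(\delta)/\log\lambda_2}\) exactly as in \eqref{eq-exp-error-new}, with the sparsity conditions \eqref{eq-sparse-birkhoff-corrected} and \eqref{eq-sparse-last-new} controlling the accumulated constants \(C_k\). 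Care is needed that the total loss stays within the \(3\delta+6\delta/\log\lambda_2\) allowance; if not, I would shrink the \(\delta\)-budget.
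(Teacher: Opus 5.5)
Your proof is correct and uses the same three-range split as the paper. Cases (a) and (b) are essentially the paper's first two cases: the mass is frozen across the deterministic connector, and then each completed free block contributes one weight $p(u)$ with distortion factor $K_1c_1^2\lambda_1^{\ell+2m_0+b}\le\lambda_2^{M\delta/3}$. That factor is best converted in aggregate, $\lambda_2^{\delta jM/3}\le\lambda_2^{b\delta/3}|I_n(x)|^{-\delta/3}$ via $jM\le n$, as the paper does; block by block, a factor $\lambda_2^{b\delta/3}$ per block piles up.

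The real difference is case (c), which the paper treats as the easiest case rather than the main obstacle. Because $|W_{k+1}|=n_{k+1}$ and $W_{k+1}$ determines $V_{k+1}$, you have $\mu(I_n(x))=\mu(I_{n_{k+1}+m_{k+1}}(x))$ throughout $n_{k+1}\le n\le n_{k+1}+m_{k+1}$. Also $|I_n(x)|\ge|I_{n_{k+1}+m_{k+1}}(x)|$ by nesting. So Claim~\ref{claim-basic-level} gives directly $\mu(I_n(x))\le|I_{n_{k+1}+m_{k+1}}(x)|^{s}\le|I_n(x)|^{s}$ with $s=t(f)-2\delta-6\delta/\log\lambda_2$. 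The tiny cylinder at the end of the returning block is exactly the situation Claim~\ref{claim-basic-level} was built to handle.

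Your route is valid: you bound the mass through \eqref{eq-measure-after-birkhoff-new} and use minimality of $m_{k+1}$ to get $e^{-S_{n_{k+1}}f(z)}\le e^{\delta n_{k+1}}|I(x|_{n_{k+1}+1}^{n})|$ for the partial returning block. But it simply reruns the proof of Claim~\ref{claim-basic-level} with $m_{k+1}$ replaced by $n-n_{k+1}$. It yields the same exponent, so no extra $\delta$-budget is consumed and there is no need to shrink $\delta$. Your closing ``monotone interpolation'' remark is precisely the observation that makes the recomputation unnecessary. The paper's one-line argument buys brevity and avoids re-tracking the constants $C_{k+1}$; your version buys nothing additional.
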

\begin{proof}
Fix $x\in C_\infty$, and, only within this proof, put
\[
s=t(f)-2\delta-\frac{6\delta}{\log\lambda_2}>0.
\]
Choose \(k\) such that 
\( n_k+m_k\leq n<n_{k+1}+m_{k+1}. \)
We consider the three possible ranges for \(n\).

First,  suppose that
\[
n_k+m_k<n\leq n_k+m_k+M+m_0+q_{k+1}.
\]
Note that no new free block has been completed in this range. 
By \eqref{eq-two-cylinder} and
\eqref{6},
\[
\mu(I_n(x))=\mu(I_{n_k+m_k}(x))
\leq \left(c_1\lambda_1^{2M+m_0+b}\right)^s |I_n(x)|^s.
\]
This is stronger than \eqref{eq-goal-new}, up to a fixed constant.

Next suppose 
\(n_k+m_k+M+m_0+q_{k+1}<n\leq n_{k+1}\) and write
\[
n=n_k+m_k+M+m_0+q_{k+1}+jM+i,
\]
where \(0\leq j\leq p_{k+1}\) and \(0\leq i<M\). Ignoring any restrictions imposed by the remaining $i$ symbols,
we obtain
\[
\mu(I_n(x))\leq\mu(I_{n_k+m_k}(x))
\prod_{h=1}^{j}p(u_h^{(k+1)}),
\]
where $p(u)=e^{-t_r(f)S_rh(z_u)}$ and $r=M-\ell-2m_0$.
Since $t_r(f)\geq t(f)-\delta>s$, $f>0$, and
$|(T^r)'|>1$, we have
$p(u)\leq|(T^r)'(z_u)|^{-s}$.
 Repeated use of
\eqref{eq-two-cylinder} and the lower bound in \eqref{6} gives
\[
\begin{split}
|I_{n_k+m_k}(x)|
\prod_{h=1}^{j}\frac{1}{|(T^r)'(z_h^{(k+1)})|}
\leq c_1^2\lambda_1^{3M+m_0+2b}
\left(K_1c_1^2\lambda_1^{\ell+2m_0+b}\right)^j |I_n(x)|.
\end{split}
\]
Using Claim~\ref{claim-basic-level}, $0<s<1$ and
\eqref{eq-M-distortion-new}, we conclude that
\[
\begin{split}
\mu(I_n(x))
&\leq c_1^2\lambda_1^{3M+m_0+2b}
\lambda_2^{\delta jM/3}|I_n(x)|^s\\
&\leq c_1^2\lambda_1^{3M+m_0+2b}\lambda_2^{b\delta/3}
|I_n(x)|^{s-\delta/3}\\
&\leq c_1^2\lambda_1^{3M+m_0+2b}\lambda_2^{b\delta/3}
|I_n(x)|^{s-\delta}.
\end{split}
\]
Here $jM\leq n$ and $|I_n(x)|\leq\lambda_2^{-n+b}$ were used.
The constant is independent of $n$ and $k$.

Finally, if
\(n_{k+1}<n\leq n_{k+1}+m_{k+1}.
\)
Then, \(n\) lies inside the returning block at level \(k+1\). Hence
\[
\mu(I_n(x))=\mu(I_{n_{k+1}+m_{k+1}}(x))
\leq|I_{n_{k+1}+m_{k+1}}(x)|^s
\leq|I_n(x)|^s.
\]
This again implies \eqref{eq-goal-new}.
If $n=n_k+m_k$, the conclusion follows from
Claim~\ref{claim-basic-level}.
This proves the claim.
\end{proof}
By Claim~\ref{claim-intermediate-levels}, Proposition~\ref{pro-mass}, and
\( C_{\infty}\subset R_{\mathrm{Bad}}[T,f,y]\), we have
\[
\dim_{\mathrm H}R_{\mathrm{Bad}}[T,f,y]
\geq
\dim_{\mathrm H} C_{\infty}
\geq
t(f)-3\delta-\frac{6\delta}{\log\lambda_2}.
\]
Since $3\delta+6\delta/\log\lambda_2<\varepsilon$ and $\varepsilon>0$ was arbitrary, letting $\varepsilon\to0$ yields
\[
\dim_{\mathrm H}R_{\mathrm{Bad}}[T,f,y]\geq t(f),
\]
and the proof of Theorem~\ref{theorem-f-finite} is complete.
\section{Recurrence with \texorpdfstring{$e^{-S_n f}$}{exp(-S_n f)}: Countable Alphabets}
\label{proof-f-countable}

In this section, we prove Theorem~\ref{theorem-f-countable}.
Since
\(
R_{\mathrm{Bad}}[T,f,y]\subset R[T,f],
\)
the upper bound follows from the known dimension formula for \(R[T,f]\) in \cite{Yuan2023}.
It
remains to prove
\(\dim_{\mathrm H}R_{\mathrm{Bad}}[T,f,y]\geq t(f),
\)
where
\(
t(f)
=
\inf\left\{
t\geq0:
P(T,-t(f+\log|T'|))\leq0
\right\}.
\)

Assume that $\Lambda=\mathbb N$.
Fix $N\geq2$ and put $A_N=\{1,\ldots,N\}$. Proposition~\ref{6.5}
gives
\begin{equation}\label{eq-countable-finite-pressure}
\lim_{N\to\infty}t(A_N,f)=t(f).
\end{equation}
For this fixed $N$, the finite restriction of $T$ satisfies
Definition~\ref{1.1} with  $K$ and $m_0=0$, as
explained in Section~\ref{pressure-function}. 
The
constants $\lambda_1,\lambda_2,b$ are fixed before the construction
parameters are chosen. For $\varepsilon>0$, choose
$\delta>0$ so that
\[
3\delta+6\delta/\log\lambda_2<\min\{\varepsilon,t(A_N,f)\}.
\]
Then choose $N_0,M,\ell$ and the return times in the order specified
in Section~\ref{proof-main-theorem-finite}, using free blocks over
$A_N$.

The full image
property allows these words to be concatenated directly, so no connecting
block is inserted.
For these values of $M$ and $\ell$, define $\mathcal U(M,\ell)$ by the formula in
Section~\ref{proof-main-theorem-finite}. 
Since every word over $A_N$ is
admissible and
\(\#A_N^{\ell}=N^{\ell},
\)
the parameters can be chosen so that
\(\#A_N^{\ell}>\#\mathcal U(M,\ell).
\)
Choose
\[
    v\in A_N^{\ell}\setminus\mathcal U(M,\ell).
\]
The target $y$ need not belong to $E_{A_N}$. For avoidance, use
$\mathcal U(M,\ell)$ from the original system $(X, T)$. 
Thus the proof only requires $y\in E$.
Repeat the construction in the proof of Theorem~\ref{theorem-f-finite} with
all free blocks over $A_N$, no connecting block, and copies of the word
$v\in A_N^{\ell}\setminus\mathcal U(M,\ell)$. Let $\mathcal C_{A_N}$ denote the
Cantor set. The
repeated initial words give
\[
    |T^{n_k}x-x|<e^{-S_{n_k}f(x)}.
\]
The copies of
$v\notin\mathcal U(M,\ell)$ give, by the argument of
Lemma~\ref{lemma-subset-new}, a positive lower bound for $|T^n x-y|$ for all
sufficiently large $n$. Hence,
\[
    \mathcal C_{A_N}\subset R_{\mathrm{Bad}}[T,f,y].
\]
For fixed $N$, every cylinder determined by a word over $A_N$
is also a cylinder of the original system. Hence the bounded distortion
and cylinder estimates used in the proof of
Theorem~\ref{theorem-f-finite} remain valid, and the same mass
distribution argument applies.
 The choice of $\delta$ gives
\[
\dim_{\mathrm H}\mathcal C_{A_N}\geq t(A_N,f)-\varepsilon.
\]
The construction depends on $\varepsilon$. Letting $\varepsilon\downarrow0$
for this fixed $N$ yields
\[
\dim_{\mathrm H}R_{\mathrm{Bad}}[T,f,y]\geq t(A_N,f).
\]
Now let $N\to\infty$ in \eqref{eq-countable-finite-pressure}.
This gives the desired lower bound.

\section{Recurrence with \texorpdfstring{$\psi$}{psi}: Finite Alphabets}
\label{therorem1.3-proof}

In this section, we prove Theorem~\ref{main-theorem-finite}. Since
\(R_{\mathrm{Bad}}(T,\psi,y)\subset R(T,\psi),
\)
the upper bound follows from the known dimension formula for \(R(T,\psi)\) in \cite{Yuan2023}.
It remains to prove the lower bound.

Recall that
\[
\log\gamma
=
\liminf_{n\to\infty}\frac{-\log\psi(n)}{n},
\qquad
\gamma\in[1,\infty].
\]
First, assume that \(1<\gamma<\infty\) and fix \(\gamma_1>\gamma\). 
 By the
definition of $\gamma$, there are infinitely many $n$ such that
\(\psi(n)>\gamma_1^{-n}.
\)
Choose $\{n_k\}_{k\geq1}$ from the integers satisfying
$\psi(n)>\gamma_1^{-n}$ and impose the sparseness conditions used in the proof
of Theorem~\ref{theorem-f-finite}.
Apply that construction with the constant function
$f\equiv\log\gamma_1$.
At every selected integer $n_k$,
\[
|T^{n_k}x-x|<\gamma_1^{-n_k}<\psi(n_k),
\]
and the safe blocks give \(x\in\mathrm{Bad}(T,y)\).
Hence, the mass distribution
argument from Theorem~\ref{theorem-f-finite}, with
\(f\equiv\log\gamma_1\), yields
\[
\dim_{\mathrm H}R_{\mathrm{Bad}}(T,\psi,y)\geq t(\Lambda,\gamma_1),
\]
where $t(\Lambda,\gamma_1)$ is the unique root of
\(P\bigl(T,-t(\log\gamma_1+\log|T'|)\bigr)=0.
\)
Letting \(\gamma_1\downarrow\gamma\) and using Proposition~\ref{0.4} give
\[
\dim_{\mathrm H}R_{\mathrm{Bad}}(T,\psi,y)\geq t(\Lambda,\gamma).
\]

Now suppose that $\gamma=1$. Fix $\alpha>1$ and define
\[
\overline{\psi}(n)=\min\{\psi(n),\alpha^{-n}\}.
\]
We have $\overline\psi\leq\psi$ and
\[
\liminf_{n\to\infty}
\frac{-\log\overline\psi(n)}{n}
=
\log\alpha.
\]
Applying the case $1<\gamma<\infty$ to $\overline\psi$ gives
\[
\dim_{\mathrm H}R_{\mathrm{Bad}}(T,\psi,y)
\geq
\dim_{\mathrm H}R_{\mathrm{Bad}}(T,\overline\psi,y)
\geq
t(\Lambda,\alpha).
\]
Letting \(\alpha\downarrow1\) gives the desired lower bound
\(\dim_{\mathrm H}R_{\mathrm{Bad}}(T,\psi,y)\geq t(\Lambda,1).
\)

Now, assume  $\gamma=\infty$.
By the dimension formula for
\(R(T,\psi)\) proved in \cite{Yuan2023},
\(\dim_{\mathrm H}R(T,\psi)=0.
\)
Since
\(R_{\mathrm{Bad}}(T,\psi,y)\subset R(T,\psi)\), we have
\(\dim_{\mathrm H}R_{\mathrm{Bad}}(T,\psi,y)=0
\).
This completes the proof of Theorem~\ref{main-theorem-finite}.

\section{Recurrence with \texorpdfstring{$\psi$}{psi}: Countable Alphabets}
\label{proof-psi-countable}
In this section,
we prove Theorem~\ref{theorem-countable}. Since
\(R_{\mathrm{Bad}}(T,\psi,y)\subset R(T,\psi),
\)
the upper bound follows from the known dimension formula for \(R(T,\psi)\) in \cite{Yuan2023}.
It remains to prove the lower bound.

Assume that $\Lambda=\mathbb N$ and $1\leq\gamma<\infty$.
Fix $\gamma_1>\gamma$ and $N\geq2$, and put $A_N=\{1,\ldots,N\}$.
Use the map $T$ on these finitely many branches, with
 $K$ and with $m_0=0$. 
 After the
constants $\lambda_1,\lambda_2,b$ are fixed, apply Section~\ref{proof-main-theorem-finite} with
$f\equiv\log\gamma_1$ and free words over $A_N$.
For $\varepsilon>0$, choose the construction parameter
$\delta$ so that
\[
3\delta+6\delta/\log\lambda_2
<\min\{\varepsilon,t(A_N,\gamma_1)\},
\]
and then choose $N_0,M,\ell$ and the remaining fixed parameters.
At each stage choose $n_k$ from
$\{n:\psi(n)>\gamma_1^{-n}\}$, subject to the sparseness
conditions and the lengths already constructed.

For these values of $M$ and $\ell$, choose
\[
    v\in A_N^{\ell}\setminus\mathcal U(M,\ell),
\]
which is possible after the parameters are chosen so that
$N^{\ell}>\#\mathcal U(M,\ell)$.

Let $\mathcal C_{A_N}$ denote the Cantor set obtained with free blocks over
$A_N$, no connecting blocks, and copies of
$v\in A_N^{\ell}\setminus\mathcal U(M,\ell)$. At the return times,
\[
    |T^{n_k}x-x|<\gamma_1^{-n_k}<\psi(n_k).
\]
The copies of $v\notin\mathcal U(M,\ell)$ give a positive lower bound for the
distance from the orbit to $y$. Hence,
\(
\mathcal C_{A_N}
\subset
R_{\mathrm{Bad}}(T,\psi,y).
\)
Every word used in the construction has symbols in $A_N$.  Using the partition sums over $A_N$ in the measure
construction and repeating the calculation in
\eqref{eq-measure-after-birkhoff-new}--\eqref{eq-product-cylinder-new} gives
\[
\dim_{\mathrm H}\mathcal C_{A_N}\geq t(A_N,\gamma_1)-\varepsilon.
\]
First let $\varepsilon\downarrow0$ for fixed $N,\gamma_1$, and then
let $N\to\infty$, using Proposition~\ref{6.1} with the positive
constant function $f=\log\gamma_1$. This gives
\[
\dim_{\mathrm H}R_{\mathrm{Bad}}(T,\psi,y)\geq t(\Lambda,\gamma_1).
\]
Finally let $\gamma_1\downarrow\gamma$ and use
Proposition~\ref{0.4}. This proves Theorem~\ref{theorem-countable}.

\section{Intersections between approximation and avoidance sets}
\label{proof-theorem-re-sh-f}

In this section, we prove
Theorem~\ref{theorem-recurrent-shrinking-psi}.

If $\psi(n)\to0$, every point in $R(T,\psi)$ satisfies
\(\liminf_{n\to\infty}|T^n x-x|=0,
\)
and every point in $S(T,\psi,y)$ satisfies
\(\liminf_{n\to\infty}|T^n x-y|=0.
\)
Both intersections are therefore empty by the definitions of $N(T)$
and $\mathrm{Bad}(T,y)$.

It remains to investigate the case
\[
c:=\limsup_{n\to\infty}\psi(n)>0.
\]
Both intersections are contained in $E$, so their Hausdorff dimensions are at
most $\dim_{\mathrm H}E$. We prove the reverse inequalities.

The return times below are chosen successively from the infinite
set $\{n\in\mathbb N:\psi(n)>c/4\}$.

\subsection{Finite alphabets}

Assume that $\Lambda=\{1,\ldots,d\}$ is finite.
If $\dim_{\mathrm H}E=0$, the conclusion is immediate.
Otherwise, fix $0<\varepsilon<\dim_{\mathrm H}E$ and put
\[
s=\dim_{\mathrm H}E-\varepsilon/2.
\]

We construct two Cantor sets $F_\infty$ and $G_\infty$,
corresponding to $R(T,\psi)\cap N(T)$ and
$S(T,\psi,y)\cap\mathrm{Bad}(T,y)$, respectively,
following the construction in
Section~\ref{proof-main-theorem-finite}.

For $R(T,\psi)\cap N(T)$, let $j_1$ be the first symbol
of $\bar{\mathtt u}$ when $m_0>0$, and choose any
$j_1\in\Lambda$ when $m_0=0$.
By Proposition~\ref{Pro1.4}, $(j_1)^n$ is admissible
for every $n\geq1$.
Choose $t$ such that
\[
|I((j_1)^{t+1})|<c/4
\]
and put $w=(j_1)^{t+1}$.

For $S(T,\psi,y)\cap\mathrm{Bad}(T,y)$, choose $t$ such that
$|I_t(y)|<c/4$ and put $w=y|_1^t$.
In each case, choose $M,\ell$ and
$r=M-\ell-2m_0$ as in
Section~\ref{proof-main-theorem-finite}, with
\[
\begin{gathered}
M>|w|+\ell+3m_0,\qquad r>b,\qquad t_r(1)\geq s,\qquad
\frac{\log(K_1c_1^2\lambda_1^{\ell+2m_0+b})}{M}
<\frac{\varepsilon\log\lambda_2}{4}.
\end{gathered}
\]

Put $L=3M+\ell-1$. For the construction of $F_\infty$, define
\[
\mathcal U_F(M,\ell)
=
\{(j_1)^\ell\}
\cup
\left\{
\left(\big((j_1)^L\big)^+\right)|_i^{i+\ell-1},
\left(\big((j_1)^L\big)^-\right)|_i^{i+\ell-1}
:1\leq i\leq3M
\right\},
\]
where terms corresponding to nonexistent adjacent
cylinders are omitted.
Since
\[
\#\mathcal U_F(M,\ell)\leq6M+1
<\#\Sigma^\ell,
\]
we choose
\(v\in\Sigma^\ell\setminus\mathcal U_F(M,\ell).
\)

For the construction of $G_\infty$, choose
$v\in\Sigma^\ell\setminus\mathcal U(M,\ell)$
as in Section~\ref{proof-main-theorem-finite}.

Set $\bar{\mathtt v}=(\bar{\mathtt u},v,\bar{\mathtt u})$
and choose the initial word
\[
V_0=
\begin{cases}
((j_1)^{3M+\ell-1},\bar{\mathtt v}),
&\text{for the construction of }F_\infty,\\
\bar{\mathtt v},
&\text{for the construction of }G_\infty.
\end{cases}
\]
As in Section~\ref{proof-main-theorem-finite}, choose
$n_k$ recursively from the integers satisfying
$\psi(n_k)>c/4$, sufficiently sparse that
\[
n_k\geq2\sum_{j<k}n_j,\qquad
n_k-|V_{k-1}|-m_0=p_kM+q_k,
\]
where $p_k\geq1$ and $0\leq q_k<M$.
Define
\[
\begin{aligned}
W_k=(V_{k-1},u_1^{(k)},\bar{\mathtt v},\ldots,
u_{p_k}^{(k)},\bar{\mathtt v},
(\bar{\mathtt u}^{\infty})|_1^{q_k},\bar{\mathtt u}),\qquad
V_k=(W_k,w,\bar{\mathtt v}),
\end{aligned}
\]
where $u_i^{(k)}\in\Sigma^r$ are chosen independently.
Then
\(|W_k|=n_k,
\)
so $w$ begins exactly at position $n_k+1$.

The starting positions of the occurrences of $v$
inserted after consecutive free blocks differ by $M$.
Across an inserted word $w$, the corresponding distance is
\(q_k+|w|+\ell+3m_0<2M.
\)
Consequently, every sufficiently late block of length
$3M+\ell-1$ contains an occurrence of $v$.

Let $F_\infty$ and $G_\infty$ be the resulting sets obtained
from the corresponding nested cylinders. The choice of $w$ and
Lemma~\ref{lemma-returning-block} give
\[
F_\infty\subset R(T,\psi),
\qquad
G_\infty\subset S(T,\psi,y).
\]
For every $x\in F_\infty$, the initial word is
$(j_1)^L$. Since every sufficiently late block
of length $L$ contains $v\notin\mathcal U_F(M,\ell)$,
the word $x|_{n+1}^{n+L}$ cannot be $(j_1)^L$
or either of its adjacent words of length $L$,
for all sufficiently large $n$.

Since $\Lambda$ is finite, the closures of the
remaining $L$-th order cylinders have positive
distance from $\overline{I((j_1)^L)}$.
Consequently, there exists $a>0$ such that
\[
|T^nx-x|\geq a
\]
for every $x\in F_\infty$ and all sufficiently
large $n$. Hence $F_\infty\subset N(T)$.

For every $x\in G_\infty$, the choice
$v\notin\mathcal U(M,\ell)$ implies that, for all
sufficiently large $n$, the word
$x|_{n+1}^{n+L}$ differs from $y|_1^L$ and
its adjacent words $(y|_1^L)^+$ and $(y|_1^L)^-$,
whenever they exist. Indeed, otherwise the occurrence
of $v$ among these $L$ symbols would belong to
$\mathcal U(M,\ell)$, a contradiction.
Hence $T^nx$ belongs neither to $I_L(y)$ nor to
either of its adjacent $L$-th order cylinders.
Since $y\in E$, the cylinder $I_L(y)$ is an open
interval containing $y$. 
There exists $a_G>0$ such that 
\[
|T^nx-y|\geq a_G
\]
for every $x\in G_\infty$ and all sufficiently
large $n$. Consequently,
$G_\infty\subset\mathrm{Bad}(T,y)$.
Thus
\[
F_\infty\subset R(T,\psi)\cap N(T),
\qquad
G_\infty\subset
S(T,\psi,y)\cap\mathrm{Bad}(T,y).
\]

It remains to estimate their dimensions.
For both constructions, assign to each free word
$u\in\Sigma^r$ the weight
\[
p(u)=|(T^r)'(z_u)|^{-t_r(1)},
\qquad
\sum_{u\in\Sigma^r}p(u)=1.
\]

Here $z_u$ is the midpoint of $I(u)$.
The weights $p(u)$ define a probability measure $\mu$
as in Section~\ref{proof-main-theorem-finite}.

Each word $(u_i^{(k)},\bar{\mathtt v})$ has length $M$.
The remaining word inserted at stage $k$ is
\(((\bar{\mathtt u}^{\infty})|_1^{q_k},
\bar{\mathtt u},w,\bar{\mathtt v}),
\)
whose length is
$q_k+|w|+\ell+3m_0<M+|w|+\ell+3m_0$.
Since $n_k\geq2\sum_{j<k}n_j$, the number of such
words among the first $n$ symbols is $O(\log n)$.
Any incomplete word at position $n$ also has
uniformly bounded length.
Using \eqref{eq-two-cylinder} and \eqref{6} as in
Section~\ref{proof-main-theorem-finite}, we obtain
\[
\mu(I_n(x))
\leq
C\exp\left(
\frac{n}{M}
\log(K_1c_1^2\lambda_1^{\ell+2m_0+b})
+O(\log n)
\right)|I_n(x)|^s.
\]
Here the constants are independent of $n$ and $x$.
By the choice of $M$ and
$|I_n(x)|\leq\lambda_2^{-n+b}$, for all sufficiently
large $n$,
\[
\mu(I_n(x))
\leq C_\varepsilon
|I_n(x)|^{s-\varepsilon/2}
=
C_\varepsilon
|I_n(x)|^{\dim_{\mathrm H}E-\varepsilon}.
\]
Proposition~\ref{pro-mass} therefore gives
\[
\dim_{\mathrm H}F_\infty,\;
\dim_{\mathrm H}G_\infty
\geq\dim_{\mathrm H}E-\varepsilon.
\]
Letting $\varepsilon\downarrow0$ proves the required
lower bounds in the finite-alphabet case.

\subsection{Countable alphabets}

Assume that $\Lambda$ is countable. By the full image
property, admissible words can be concatenated directly.
Let $A_N=\{1,\ldots,N\}\subset\Lambda$, and denote by
$E_{A_N}$ the set of points in $E$ whose symbolic
representations belong to $A_N^{\mathbb N}$.

First, fix the prescribed word $w$ for each of the two
constructions as above, taking $j_1=1$ in the recurrence
case. Since these words are finite, choose $N$ sufficiently
large that all their symbols belong to $A_N$.

For this fixed $N$, the finite-alphabet construction
applies to the map $T$, with the same $K$ and
$m_0=0$. For each
$0<\varepsilon<\dim_{\mathrm H}E_{A_N}$, choose the
remaining parameters and the safe word
$v\in A_N^\ell$ as in the finite-alphabet case.
For shrinking targets, $\mathcal U(M,\ell)$ is defined
using the target $y$, which need not belong
to $E_{A_N}$.

Let $F_N$ and $G_N$ denote the resulting Cantor sets.
The inclusion arguments and the cylinder measure estimates
proved in the finite-alphabet case give
\[
\begin{gathered}
F_N\subset R(T,\psi)\cap N(T),\qquad
G_N\subset S(T,\psi,y)\cap\mathrm{Bad}(T,y),\\
\dim_{\mathrm H}F_N\geq\dim_{\mathrm H}E_{A_N}-\varepsilon,
\qquad
\dim_{\mathrm H}G_N\geq\dim_{\mathrm H}E_{A_N}-\varepsilon.
\end{gathered}
\]
The dependence of $F_N,G_N$ on $\varepsilon$ is suppressed
in the notation. First let $\varepsilon\downarrow0$ with
$N$ fixed, and then let $N\to\infty$.
Since
\(\dim_{\mathrm H}E_{A_N}\longrightarrow\dim_{\mathrm H}E
\)
by the finite-subsystem approximation in
Section~\ref{pressure-function}, both lower bounds follow.
Together with the upper bounds, this completes the proof
of Theorem~\ref{theorem-recurrent-shrinking-psi}.

\section*{Acknowledgments}
The first author is partially supported by Beijing Natural Science Foundation (JQ26002).
The second author is partially supported by  Guangdong University
Young Innovative Talents Program Project (No. 2025KQNCX059).

\bibliographystyle{amsplain}
\bibliography{ref}

@misc{He2025MTP,
  author        = {He, Y.-B.},
  title         = {A Dimensional Mass Transference Principle from Balls
                   to Open Sets and Applications to Dynamical
                   Diophantine Approximation},
  year          = {2025},
  note= {Preprint: arXiv: 2508.03359}
}

@article{He2023QuantitativeRP,
  title={Quantitative recurrence properties for piecewise expanding maps on $[0,1]^d$},
  author={Y.-B. He and L.-M. Liao},
  journal={Annali Scuola Normale Superiore-Classe di Scienze},
  year={2026},
  volume ={27},
   number = {2},
   pages={969-1014}
}

@article{Yuan2023,
  doi = {10.1080/14689367.2023.2184328},
  year = {2023},
  month = feb,
  publisher = {Informa {UK} Limited},
  author = {N. Yuan and B. Li},
  title = {Hausdorff dimensions of recurrent and shrinking target sets under {Lipschitz} functions for expanding {Markov} maps},
 journal = {Dyn. Syst.},
 pages = {365–394},
  number = {3},
  volume = {38}
}

@article{yuan2022,
author = {N. Yuan and B. Li and M. Wu},
title = {BADLY APPROXIMABLE AND NONRECURRENT SETS FOR EXPANDING MARKOV MAPS},
journal = {Fractals},
volume = {29},
number = {06},
pages = {2150170},
year = {2021},
doi = {10.1142/S0218348X2150170X},
URL = {   https://doi.org/10.1142/S0218348X2150170X}
}

@book{Falconer1990,
  title={Fractal Geometry-Mathematical Foundations and Application},
  author={K. J. Falconer},
  publisher ={Wiley, New York},
  year={1990}
}

@book{mane1990,
  title={Ergodic Theory and Differentiable Dynamics},
  author={R. {Ma\~{n}\'{e}}},
  publisher ={Springer-Verlag},
  year={1987}
}

@article{Hill1995,
  doi = {10.1007/bf01245179},
  year = {1995},
  month = dec,
  publisher = {Springer Science and Business Media {LLC}},
  volume = {119},
  number = {1},
  pages = {175--198},
  author = {R. Hill and S. L. Velani},
  title = {The ergodic theory of shrinking targets},
  journal = {Invent. Math.}
}

@article{Baker2019,
    title={{Quantitative recurrence properties for self-conformal sets.}},
    year={2021},
    journal = {Proc. Amer. Math. Soc.},
    author = {S. Baker and M. Farmer},
    number = {3},
     month = {3},
    pages = {1127–1138},
    volume = {149},
    doi = {10.1090/proc/15285},
    issn = {1088-6826}
}

@article{Chang2018,
    author = {Y.-Y. Chang and M. Wu and W. Wu},
    journal = {Proc. Amer. Math. Soc.},
    number = {4},
    year = {2019},
    title = {{Quantitative recurrence properties and homogeneous self-similar sets}},
    volume = {147},
    pages = {1453--1465}
}

@article{Hussain2021,
    author = {Hussain, M. and Li, B. and Simmons, D. and Wang, B.-W.},
    journal = {Ergodic Theory Dynam. Systems},
      volume={42},
 number = {6},
    pages = {1994-2008},
    title = {{Dynamical Borel-Cantelli lemma for recurrence theory}},
    year = {2022}
}

@article{koivusalo2018, 
title={{Recurrence to shrinking targets on typical self-affine fractals}}, 
volume={61}, 
number={2}, 
journal={Proc. Edinb. Math. Soc.}, 
publisher={Cambridge University Press}, 
author={Koivusalo, H. and Ramírez, F. A.}, 
year={2018},
pages={387-400}
}

@article{li2022,
  year = {2023},
  publisher = {Elsevier {BV}},
  volume = {421},
  author = {B. Li and L.-M. Liao and S. Velani and E. Zorin},
  title = {The shrinking target problem for matrix transformations of tori: Revisiting the standard problem},
  journal = {Adv. Math. },
pages={108994}
}

@article{Li2014,
  title={The shrinking target problem in the dynamical system of continued fractions},
  author={B. Li and B.-W. Wang and J. Wu and J. Xu},
  journal={Proc. Lond. Math. Soc.},
  year={2014},
  volume={108},
    number = {1},
   pages={159-186} 
}

@article{Shen2013,
  title={Shrinking target problems for beta-dynamical system},
  author={L.-M. Shen and B.-W. Wang},
  journal={Sci. China Math.},
  year={2013},
  volume={56},
  number={1},
  pages={91-104}
}

@article{Seuret2013,
  title={{Quantitative recurrence properties in conformal iterated function systems}},
  author={S. Seuret and B.-W. Wang},
  journal={Adv. Math.},
  year={2015},
  volume={280},
  pages={472-505}
}

@article{Tan2011,
  title={{Quantitative recurrence properties for beta-dynamical system}},
  author={B. Tan and B.-W. Wang},
  journal={Adv. Math.},
  year={2011},
  volume={228},
    number={4},
  pages={2071-2097}
}

@article{Jarnik1931,
author = {V. Jarn\'{i}k},
journal = {Math. Z.},
pages = {505-543},
title = {\"{U}ber die simultanen diophantischen Approximationen},
volume = {33},
year = {1931},
}

@article{Khintchine1924,
author = {Khintchine, A.},
journal = {Math. Ann.},
pages = {115-125},
title = {Einige S\"{a}tze \"{u}ber Kettenbr\"{u}che, mit Anwendungen auf die Theorie der Diophantischen Approximationen},
volume = {92},
year = {1924},
}

@article{Hill1997,
  year = {1997},
  volume = {85},
  number = {1},
  pages = {193--216},
  author = {R. Hill and S. L. Velani},
  title = {{Metric {Diophantine} approximation in Julia sets of expanding rational maps}},
  journal = {Publ. Math. Inst. Hautes  \'{E}tudes Sci.}
}

@article{barany2018,
  title = {Shrinking targets on {Bedford-McMullen} carpets},
  volume = {117},
  ISSN = {0024-6115},
  url = {http://dx.doi.org/10.1112/plms.12151},
  DOI = {10.1112/plms.12151},
  number = {5},
  journal = { Proc. Lond. Math. Soc.},
  publisher = {Wiley},
  author = {B. B\'ar\'any and M. Rams },
  year = {2018},
  month = jun,
  pages = {951–995}
}

@article{Beresnevich2006,
  title = {Measure theoretic laws for lim sup sets},
  volume = {179},
  ISSN = {1947-6221},
  url = {http://dx.doi.org/10.1090/memo/0846},
  DOI = {10.1090/memo/0846},
  number = {846},
  journal = { Mem. Amer. Math. Soc. },
  publisher = {American Mathematical Society (AMS)},
  author = {V.  Beresnevich and D. Dickinson and S. Velani},
  year = {2006},
  pages = { x+91 pp}
}

@article {Levesley2007,
    AUTHOR = {J. Levesley and C. Salp and S. L. Velani},
     TITLE = {On a problem of {K}. {M}ahler: {D}iophantine approximation and
              {C}antor sets},
   JOURNAL = {Math. Ann.},
  FJOURNAL = {Mathematische Annalen},
    VOLUME = {338},
      YEAR = {2007},
    NUMBER = {1},
     PAGES = {97--118},
      ISSN = {0025-5831,1432-1807},
   MRCLASS = {11J83 (11J82 11K55 37A45)},
MRREVIEWER = {R.\ Wallisser},
       DOI = {10.1007/s00208-006-0069-8},
       URL = {https://doi.org/10.1007/s00208-006-0069-8},
}

@article {Tanwang2024,
    AUTHOR = {B. Tan and B.-W. Wang and J. Wu},
     TITLE = {Mahler's question for intrinsic {D}iophantine approximation on
              triadic {C}antor set: the divergence theory},
   JOURNAL = {Math. Z.},
  FJOURNAL = {Math. Z.},
    VOLUME = {306},
      YEAR = {2024},
    NUMBER = {1},
     PAGES = {Paper No. 2, 24},
      ISSN = {0025-5874,1432-1823},
   MRCLASS = {11J83 (11K55)},
MRREVIEWER = {Gerardo\ Gonz\'alez Robert},
       DOI = {10.1007/s00209-023-03397-1},
       URL = {https://doi.org/10.1007/s00209-023-03397-1},
}

@misc{Heweikun2024,
title={Khintchine dichotomy for self-similar measures},
author={T. B\'{e}nard and W.-K. He  and H. Zhang },
note={J. Amer.
Math. Soc., 2025. Published online. https://doi.org/10.1090/jams/1070.
},
}

@misc{Heweikun2025,
title={Khintchine dichotomy and Schmidt estimates for self-similar
measures on $\mathbb{R}^n$},
author={T. B\'{e}nard and W.-K. He  and H. Zhang},
year={2025},
note={Preprint: arXiv: 2508.09076},
}

@article {Bugeaud2016,
    AUTHOR = {Y. Bugeaud and  A. Durand},
     TITLE = {Metric {D}iophantine approximation on the middle-third
              {C}antor set},
   JOURNAL = {J. Eur. Math. Soc. (JEMS)},
  FJOURNAL = {Journal of the European Mathematical Society (JEMS)},
    VOLUME = {18},
      YEAR = {2016},
    NUMBER = {6},
     PAGES = {1233--1272},
      ISSN = {1435-9855,1435-9863},
   MRCLASS = {11J83 (11J82 28A80 60D05)},
MRREVIEWER = {Dzmitry\ Badziahin},
       DOI = {10.4171/JEMS/612},
       URL = {https://doi.org/10.4171/JEMS/612},
}

@article {Khalil36,
    AUTHOR = {Khalil, O. and Luethi, M.},
     TITLE = {Random walks, spectral gaps, and {K}hintchine's theorem on
              fractals},
   JOURNAL = {Invent. Math.},
  FJOURNAL = {Inventiones Mathematicae},
    VOLUME = {232},
      YEAR = {2023},
    NUMBER = {2},
     PAGES = {713--831},
      ISSN = {0020-9910,1432-1297},
   MRCLASS = {11J83 (28A80)},
MRREVIEWER = {Johannes\ Schleischitz},
       DOI = {10.1007/s00222-022-01171-4},
       URL = {https://doi.org/10.1007/s00222-022-01171-4},
}

@article{Mahler1984,
    author ={K. Mahler},
    title = {Some suggestions for further research},
    journal = {Bull. Austral. Math. Soc.},
    year = {1984},
volum={29},
pages={101–108}
}

@book{Schmidt1980,
    author = {W. M. Schmidt},
    title = {{Diophantine approximation}},
    publisher = {volume 785 of Lecture Notes in Mathematics. Springer,
Berlin},
    year = {1980}
}

@article {Broderick2011,
    AUTHOR = {Broderick, R. and Fishman, L. and Kleinbock, D.},
     TITLE = {Schmidt's game, fractals, and orbits of toral endomorphisms},
   JOURNAL = {Ergodic Theory Dynam. Systems},
  FJOURNAL = {Ergodic Theory and Dynamical Systems},
    VOLUME = {31},
      YEAR = {2011},
    NUMBER = {4},
     PAGES = {1095--1107},
      ISSN = {0143-3857,1469-4417},
   MRCLASS = {11J83 (11K55 28A80)},
MRREVIEWER = {Thomas\ Ward},
       DOI = {10.1017/S0143385710000374},
       URL = {https://doi.org/10.1017/S0143385710000374},
}

@article {Das2018,
    AUTHOR = {Das, T. and Fishman, L. and Simmons, D. and
              Urba\'nski, M.},
     TITLE = {Badly approximable vectors and fractals defined by conformal
              dynamical systems},
   JOURNAL = {Math. Res. Lett.},
  FJOURNAL = {Mathematical Research Letters},
    VOLUME = {25},
      YEAR = {2018},
    NUMBER = {2},
     PAGES = {437--467},
      ISSN = {1073-2780,1945-001X},
   MRCLASS = {37F35 (28A80 37C45 37F10)},
MRREVIEWER = {Manuel\ Stadlbauer},
       DOI = {10.4310/MRL.2018.v25.n2.a5},
       URL = {https://doi.org/10.4310/MRL.2018.v25.n2.a5},
}

@article {Das2019,
    AUTHOR = {Das, T. and Fishman, L. and Simmons, D. and
              Urba\'nski, M.},
     TITLE = {Badly approximable points on self-affine sponges and the lower
              {A}ssouad dimension},
   JOURNAL = {Ergodic Theory Dynam. Systems},
  FJOURNAL = {Ergodic Theory and Dynamical Systems},
    VOLUME = {39},
      YEAR = {2019},
    NUMBER = {3},
     PAGES = {638--657},
      ISSN = {0143-3857,1469-4417},
   MRCLASS = {28A80 (11K55 37C45)},
MRREVIEWER = {J\"org\ Neunh\"auserer},
       DOI = {10.1017/etds.2017.42},
       URL = {https://doi.org/10.1017/etds.2017.42},
}

@article{Schmidt1966,
    author = { W. M. Schmidt},
    title = {  On badly approximable numbers and certain games},
    journal ={Trans. Amer. Math. Soc.},
    year ={ 1966},
    pages={ 178–199},
    volume={123},
}

@article {Hu2014,
    AUTHOR = {Hu, H. and Yu, Y.-L.},
     TITLE = {On {S}chmidt's game and the set of points with non-dense
              orbits under a class of expanding maps},
   JOURNAL = {J. Math. Anal. Appl.},
  FJOURNAL = {Journal of Mathematical Analysis and Applications},
    VOLUME = {418},
      YEAR = {2014},
    NUMBER = {2},
     PAGES = {906--920},
      ISSN = {0022-247X,1096-0813},
   MRCLASS = {37F35},
MRREVIEWER = {David\ Jonathon\ Sixsmith},
       DOI = {10.1016/j.jmaa.2014.04.026},
       URL = {https://doi.org/10.1016/j.jmaa.2014.04.026},
}

@article{Hu2025,
    author = {H. Hu and Z.-H. Li and Y.-L. Yu},
    title = {Schmidt's game and nondense orbits of a class of interval maps},
    journal = { Dyn. Syst.},
    year = {2025},
    pages={1-14},
}

@misc{huang2025,
      title={A winning approach to the intersections of twisted non-recurrent sets with fractals}, 
      author={J.-J. Huang and B. Li and B. Wang and N. Yuan},
      year={2025},
      note={Preprint: arXiv: 2512.07686},
      primaryClass={math.DS},
      url={https://arxiv.org/abs/2512.07686}, 
}

@article {Hanus2002,
    AUTHOR = {P. Hanus and R.D. Mauldin and M. Urba\'{n}ski},
     TITLE = {Thermodynamic formalism and multifractal analysis
of conformal infinite iterated function systems},
   JOURNAL = { Acta Math. Hungar. },
    VOLUME = {96},
      YEAR = {2002},
     PAGES = {27--98},
}

@article {Urbanski,
    AUTHOR = {P. Hanus and R.D. Mauldin and M. Urba\'{n}ski},
     TITLE = {{Diophantine analysis of conformal iterated function systems}},
   JOURNAL = { Monatsh. Math.},
    VOLUME = {137},
      YEAR = {2002},
     PAGES = {325-340},
}

@article {Tseng2009,
    AUTHOR = {Tseng, J.},
     TITLE = {Schmidt games and {M}arkov partitions},
   JOURNAL = {Nonlinearity},
  FJOURNAL = {Nonlinearity},
    VOLUME = {22},
      YEAR = {2009},
    NUMBER = {3},
     PAGES = {525--543},
      ISSN = {0951-7715,1361-6544},
   MRCLASS = {37D20 (28A78 37A05 37B10)},
MRREVIEWER = {Radhakrishnan\ Nair},
       DOI = {10.1088/0951-7715/22/3/001},
       URL = {https://doi.org/10.1088/0951-7715/22/3/001},
}

@article {Kleinbock2010,
    AUTHOR = {Kleinbock, D. and Weiss, B.},
     TITLE = {Modified {S}chmidt games and {D}iophantine approximation with
              weights},
   JOURNAL = {Adv. Math.},
  FJOURNAL = {Advances in Mathematics},
    VOLUME = {223},
      YEAR = {2010},
    NUMBER = {4},
     PAGES = {1276--1298},
      ISSN = {0001-8708,1090-2082},
   MRCLASS = {11J83 (11J13)},
MRREVIEWER = {Simon\ Kristensen},
       DOI = {10.1016/j.aim.2009.09.018},
       URL = {https://doi.org/10.1016/j.aim.2009.09.018},
}

@article{Kleinbock2005,
    AUTHOR = {Kleinbock, D. and Weiss, B.},
     TITLE = {Badly approximable vectors on fractals},
   JOURNAL = {Israel J. Math.},
  FJOURNAL = {Israel Journal of Mathematics},
    VOLUME = {149},
      YEAR = {2005},
     PAGES = {137--170},
      ISSN = {0021-2172,1565-8511},
   MRCLASS = {11J83 (11K55 28A80 37C45)},
MRREVIEWER = {Radhakrishnan\ Nair},
       DOI = {10.1007/BF02772538},
       URL = {https://doi.org/10.1007/BF02772538},
}

@misc{HussainSimmons2026,
  author       = {Hussain, Mumtaz and Simmons, David},
  title        = {Well and badly approximable sets, and rapid winning},
  year         = {2026},
  howpublished = {arXiv preprint arXiv:2608.25349v2},
  url          = {https://arxiv.org/abs/2608.25349v2}
}

@article {persson2024,
    AUTHOR = {Z.-N. Hu and T. Persson},
     TITLE = {Hausdorff dimension of recurrence sets},
   JOURNAL = {Nonlinearity},
  FJOURNAL = {Nonlinearity},
    VOLUME = {37},
      YEAR = {2024},
    NUMBER = {5},
     PAGES = {Paper No. 055010, 22},
      ISSN = {0951-7715,1361-6544},
   MRCLASS = {37C45 (28A80 37D20)},
MRREVIEWER = {Esa\ J\"arvenp\"a\"a},
       DOI = {10.1088/1361-6544/ad3597},
       URL = {https://doi.org/10.1088/1361-6544/ad3597},
}

\end{document}